\DeclareMathOperator{\tw}{tw}
\DeclareMathOperator{\ev}{ev}
\DeclareMathOperator{\ord}{ord}
\def\!{\mskip-\thinmuskip}
\theoremstyle{plain}
	\theoremstyle{plain}
 	\newtheorem{thm}{Theorem}[section]
	\theoremstyle{plain}
 	\newtheorem{prop}[thm]{Proposition}
	\theoremstyle{plain}
 	\newtheorem{cor}[thm]{Corollary}
	\theoremstyle{plain}
 	\newtheorem{lem}[thm]{Lemma}
\def
	\theoremstyle{definition}
	\theoremstyle{definition}
	\theoremstyle{definition}
		\theoremstyle{definition}
        \newtheorem{eg}[thm]{Example}
		\theoremstyle{remark}
        \newtheorem{rem}[thm]{Remark}
  \newtheorem{conj}[thm]{Conjecture}
\newcommand{\Q}{{\mathbb Q}}
\newcommand{\Z}{{\mathbb Z}}
\newcommand{\supp}{\operatorname{supp}}
\newcommand{\nc}{\newcommand}
\nc{\fg}{\mathfrak{g}}
\nc{\fh}{\mathfrak{h}}
\nc{\seq}{\coloneqq}
\nc{\rB}{\mathrm{B}}
\nc{\rC}{\mathrm{C}}
\nc{\rX}{\mathrm{X}}
\nc{\diag}{\mathrm{diag}}
\nc{\Cc}{\mathscr{C}}
\nc{\ep}{\varepsilon}
\nc{\II}{I}
\nc{\hI}{\hat{\II}}
\nc{\cY}{\mathcal{Y}}
\nc{\cM}{\mathcal{M}}
\nc{\tC}{\widetilde{C}}
\nc{\tc}{\tilde{c}}
\nc{\cN}{\mathscr{N}}
\nc{\ptd}{{\mathsf{pt}}}
\nc{\sL}{\mathsf{L}}
\nc{\sM}{\mathsf{M}}
\nc{\Ls}{L^\sigma}
\nc{\cZ}{\mathcal{Z}}
\nc{\res}{\mathrm{res}}
\nc{\tfg}{\tilde{\fg}}
\nc{\tII}{\tilde{\II}}
\nc{\tcM}{\tilde{\cM}}
\nc{\tcY}{\tilde{\cY}}
\nc{\tL}{\tilde{L}}
\nc{\tsigma}{\tilde{\sigma}}
\nc{\qlp}{U_q(\mathcal{L}\fg)}
\nc{\tqlp}{U_q(\mathcal{L}\tfg)}
\nc{\qlps}{U_q(\mathcal{L}\fg^\sigma)}
\nc{\tqlps}{U_q(\mathcal{L}\tfg^{\sigma})}
\tikzstyle{none}=[inner sep=0pt]
\tikzstyle{black box}=[draw=black, fill=black!25]
\tikzstyle{white box}=[draw=black, fill=white]
\tikzstyle{black circle}=[circle,draw=black!50, fill=black!25]
\tikzstyle{red circle}=[circle,draw=red!50, fill=red!25]
\tikzstyle{blue circle}=[circle,draw=blue!50, fill=blue!25]
\tikzstyle{green circle}=[circle,draw=green!50, fill=green!25]
\tikzstyle{yellow circle}=[circle,draw=yellow!50, fill=yellow!25]
\tikzstyle{unfrozen}=[circle,inner sep=1pt,minimum size=1pt,draw=black!100,fill=red!50]
\tikzstyle{frozen}=[rectangle,inner sep=1pt,minimum size=12pt,draw=black!75,fill=cyan!50]
\tikzstyle{boundary}=[-,draw=cyan]
\tikzstyle{internal}=[-,draw=red]
\tikzstyle{pre}=[<-,shorten <=1pt,>={Stealth[round]},semithick]
\tikzstyle{post}=[->,shorten >=1pt,>={Stealth[round]},semithick]
\tikzstyle{point}= [circle,inner sep=0pt, outer sep=1.5pt,minimum size=1.5pt,draw=black!100,fill=black!100]
\tikzstyle{every label}= [black]
\newcommand{\frz}{\mathfrak{f}}
\renewcommand{\tw}{\operatorname{tw}}
\newcommand{\supp}{\operatorname{supp}}
	\newcommand{\tw}{{\opname tw}}
    \renewcommand{\G}{{\mathbb G}}
   \newcommand{\G}{{\mathbb G}}
    \renewcommand{\C}{{\mathbb C}}
   \newcommand{\C}{{\mathbb C}}
\title{Freezing operators in representation theory of quantum loop algebras}
\author{Ryo Fujita}
\address{Research Institute for Mathematical Sciences, Kyoto University, Kitashirakawa-Oiwake-cho, Sakyo, Kyoto, 606-8502, Japan}
\email{rfujita@kurims.kyoto-u.ac.jp}
\author{Fan Qin}
\address{School of Mathematical Sciences, Beijing Normal University
No. 19, XinJieKouWai St., HaiDian District, Beijing 100875, China}
\email{qin.fan.math@gmail.com}
\thanks{}
\begin{document}

        \begin{abstract}
        We prove the Hernandez conjecture \cite{Her} on the simple $(q,t)$-characters (an analog of the Kazhdan--Lusztig conjecture) for untwisted quantum loop algebras of classical type. This result is new in type $\mathrm{C}$.
        We also prove that the folding homomorphism, introduced by Hernandez \cite{Her10}, gives a dimension-preserving bijective correspondence between the finite-dimensional simple representations (in a skeletal subcategory) of untwisted quantum loop algebras of classical simply-laced type and those of the corresponding doubly-twisted quantum loop algebras. This result is new in type $\mathrm{D}$.   
        In our approach, we develop a bootstrapping method for $q$ and $(q,t)$-characters, based on the freezing operator previously introduced in the context of cluster algebras by the second named author \cite{qin2023analogs}. This method allows us to reduce statements for general simple representations in all classical types to corresponding results on core subcategories in a uniform manner.
        \end{abstract}

        \maketitle
 
	\setcounter{tocdepth}{1} \tableofcontents{}
	
\section{Introduction}	\label{intro}

\subsection{Background}
The quantum loop algebra $\qlp$ is an affinized quantum group associated with a complex finite-dimensional simple Lie algebra $\fg$ depending on a quantum parameter $q \in \C^\times$.
Originating from the study of quantum integrable systems and solvable lattice models in statistical physics, it has been intensively studied from various viewpoints for over 40 years.

One of the central topics is the study of its finite-dimensional representations. 
Even if $q$ is generic (as we assume throughout this paper), the monoidal category $\Cc_\fg$ of finite-dimensional representations of $\qlp$ is known to be non-semisimple and non-commutative.  
The highest weight type classification of simple representations in $\Cc_\fg$ has been established by Chari--Pressley \cite{CP, CP95}.
One can also have the natural analog of the notion of character of a representation $M$ in $\Cc_\fg$, called the $q$-character $\chi_q(M)$, due to Frenkel--Reshetikhin \cite{FR}, which encodes the spectral decomposition of $M$ with respect to the action of the quantum loop Cartan subalgebra of $\qlp$.
The $q$-character admits a natural interpretation as a transfer matrix in the context of statistical physics.  
Thus, it is a fundamental and important problem to compute the $q$-character $\chi_q(L)$ of a simple representation $L$ in $\Cc_\fg$ in terms of its highest weight.
However, except for the simplest $\mathfrak{sl}_2$ case, there is no known unified closed formula for the simple $q$-characters, as far as the authors know.  

In \cite{Nakajima04}, Nakajima established a uniform algorithm to compute the simple $q$-characters when $\fg$ is of simply-laced type (i.e., type $\mathrm{ADE}$) using the geometry of quiver varieties which he had introduced. 
It is analogous to the celebrated Kazhdan--Lusztig algorithm to compute the characters of simple highest weight representations of complex semisimple Lie algebras. 
More precisely, Nakajima constructed the $t$-analog of $q$-character (the $(q,t)$-character for short) $\chi_{q,t}(L)$ for each simple representation $L$ in $\Cc_\fg$ from its highest weight by the Kazhdan--Lusztig type algorithm, with $t$ being a formal parameter, and proved the analog of the Kazhdan--Lusztig conjecture, namely the equality 
\begin{equation} \label{eq:KL_intro}
\chi_{q,t}(L)|_{t=1} = \chi_q(L)
\end{equation}
for any simple representation $L$, based on his geometric realization of $L$ in \cite{Nak01}. 
Although the similar geometric theory is not available to the other $\fg$ of non-simply-laced type (i.e., type $\mathrm{BCFG}$), Hernandez showed that a similar algebraic construction of the simple $(q,t)$-characters $\chi_{q,t}(L)$ via the Kazhdan--Lusztig type algorithm works in a uniform way.
Then, one can expect that the specialization $\chi_{q,t}(L)|_{t=1}$ gives us the simple $q$-character $\chi_q(L)$ for any simple representation $L$ in $\Cc_\fg$ for general $\fg$, as conjectured in \cite[Conjecture 7.3]{Her}, which we refer to as the Hernandez conjecture.

In recent years, there have been several remarkable developments in the study of the category $\Cc_\fg$, including the discovery of the cluster structure of $\Cc_\fg$ and the connection with the representation theory of quiver Hecke (KLR) algebras.
The former was initiated by Hernandez--Leclerc's seminal paper \cite{HL10}, with the notion of the monoidal categorifications of cluster algebras. This topic has since attracted considerable attention and led to many important results; see, for example, \cite{NakCA, HL2, KQ, HL16, qin2017triangular, BC, KKOP1, KKOP2}. The latter was initiated by a series of works of Kang--Kashiwara--Kim--Oh \cite{KKKsw1, KKKsw2, KKKOsw3, KKKOsw4}, which introduced the generalized quantum affine Schur--Weyl duality functors. These two developments are closely related.

Based on these new developments, Hernandez, Oh, Oya and the first named author recently verified  in \cite{FHOO1} the Hernandez conjecture for type $\mathrm{B}$, and the equality \eqref{eq:KL_intro} for any simple representations in the \emph{core} subcategories (certain distinguished monoidal subcategories) of $\Cc_\fg$ for general $\fg$.

\subsection{Main result}
In this paper, we give a proof of the above Hernandez conjecture, namely the equality \eqref{eq:KL_intro} for any simple representation $L$ in $\Cc_\fg$, when $\fg$ is of classical type (i.e., type $\mathrm{ABCD})$ in a uniform way, see Theorem \ref{thm:H-conj-C}.
This is new for type $\mathrm{C}$ and provides an alternative proof of the conjecture for type $\mathrm{ABD}$ without appealing to the theory of Nakajima quiver varieties.

To establish this result, we develop a method that allows us to bootstrap statements from the core subcategories to the entire category in a uniform way for all classical types. Our method is based on an operator on  $q$ and $(q,t)$-characters, called the \emph{freezing operator}. This operator was previously introduced by the second named author in the context of cluster algebras \cite{qin2023analogs}.

More precisely, we use the freezing operator to relate the $q$ and $(q,t)$-characters for a quantum loop algebra $\qlp$ with those for another quantum loop algebra $\tqlp$ of higher rank. Here we assume that the Dynkin diagram of $\fg$
is a proper subdiagram of that of $\tfg$. In our setting, the freezing operator admits a natural representation-theoretic meaning: it picks up the leading constituent of the restriction of a simple representation of $\tqlp$ to $\qlp$ at the level of the $q$-character.

Let us outline our proof. 
For a given simple representation $L$ in $\Cc_\fg$, we can always find a large enough $\tfg$ and a simple representation $\tilde{L}$ belonging to a core subcategory of $\Cc_{\tfg}$ (Remark \ref{rem:core-subcat}), whose restriction to $\qlp$ has $L$ as its leading constituent.
Then, the freezing operator $\frz$ sends $\chi_{q,t}(\tilde{L})$ and $\chi_q(\tilde{L})$ to $\chi_{q,t}(L)$ and $\chi_q(L)$ respectively. 
As mentioned above, we know that the equality \eqref{eq:KL_intro} holds for $\tilde{L}$, belonging to a core subcategory of $\Cc_{\tfg}$. Let $\ev_{t=1}$ denote the evaluation of $t$ at $1$. Then we deduce the desired equality \eqref{eq:KL_intro} from the commutativity between $\frz$ and $\ev_{t=1}$:
\[ 
\xymatrix{ \chi_{q,t}(\tilde{L}) \ar@{|->}[r]^-{\frz} \ar@{|->}[d]_-{\ev_{t=1}} & \chi_{q,t}(L) \ar@{|->}^-{\ev_{t=1}}[d]  \\
\chi_{q}(\tilde{L}) \ar@{|->}[r]^-{\frz} & \chi_q(L).}
\]

It also tells us why we cannot apply our strategy to the case when $\fg$ is of exceptional type (i.e., type $\mathrm{EFG}$): we have to find another finite-dimensional simple Lie algebra $\tfg$ of arbitrarily higher rank, which is impossible if $\fg$ is of exceptional type.  

\subsection{Application to twisted quantum loop algebras}
We find another application of the above freezing technique to the representation theory of twisted quantum loop algebras $\qlps$, where $\sigma$ is a non-trivial automorphism of the Dynkin diagram of $\fg$ (of simply-laced type).
Note that the category $\Cc_\fg^\sigma$ of finite-dimensional representations of $\qlps$ is identical to that of the quantized enveloping algebras of the twisted affine Kac--Moody algebras, which naturally arise in the Kac classification \cite{Kac90} as the Langlands dual to the untwisted affine Kac--Moody algebras of non-simply-laced type.    
For a representation $M$ in the category $\Cc^\sigma_\fg$,
Hernandez \cite{Her10} defined its twisted $q$-character $\chi_q^\sigma(M)$ as an analog of Frenkel--Reshetikhin's $q$-character. 
Although there is no known construction of the twisted $(q,t)$-characters, we have instead the folding homomorphism relating the Grothendieck rings of the category $\Cc_\fg$ and $\Cc_\fg^\sigma$ due to Hernandez \cite{Her10}, via which one may expect that the simple twisted $q$-characters can be computed from the simple (untwisted) $q$-characters.  
Indeed, such an expectation has been verified for Kirillov--Reshetikhin modules by Hernandez \cite{Her10}. 
More recently, it has been verified by Kang--Kashiwara--Kim--Oh for any simple representations in $\Cc_\fg^\sigma$ when $\fg$ is of type $\mathrm{A}$ in \cite{KKKOsw3}, and for any simple representations in the core subcategories for general $\fg$ and $\sigma$ \cite{KKKOsw4}, through the generalized quantum affine Schur--Weyl duality functor.

In this paper, by applying the freezing operator, we confirm the expectation that each simple twisted $q$-character for $\Cc_\fg^\sigma$ is obtained by the folding of a simple (untwisted) $q$-character when $\fg$ is of type $\mathrm{AD}$ with $\sigma$ being of order $2$, which is new in type $\mathrm{D}$, see Theorem \ref{Thm:tw-main}.
The proof is parallel to that of the Hernandez conjecture for classical type above.
Here the role of the evaluation homomorphism $\ev_{t=1}$ is played by the folding homomorphism.

\subsection{Organization} This paper is organized as follows. 
\S\ref{Sec:pre} is the recollection of some basic facts  around finite-dimensional representations of the (untwisted) quantum loop algebras, their $q$-characters, the construction of simple $(q,t)$-characters, and the Hernandez conjecture.    
\S\ref{Sec:frz} is the main part of this paper. 
We introduce the freezing operator for $q$ and $(q,t)$-characters following \cite{qin2023analogs} in \S\S\ref{Ssec:pt}--\ref{Ssec:frz}. 
After investigating the behavior of $q$ and $(q,t)$-characters under the freezing operator in \S\S\ref{Ssec:frz-chiq}--\ref{Ssec:frz-chiqt}, we apply it to obtain the first main result (a proof of the Hernandez conjecture for classical type) in \S\ref{Ssec:proofHconj}.
In \S\ref{Sec:tw}, we discuss the application to the twisted quantum loop algebras.
The second main result (= Theorem \ref{Thm:tw-main}) is proved in \S\ref{Ssec:prf-tw-main}.

\subsection*{Acknowledgments}
The authors thank Myungho Kim for answering a question on the references. 
{ They also thank Feifei Li for pointing out an error in an earlier remark, which leads to Remark \ref{rem:fold}.}
The first named author appreciates the hospitality of Beijing Normal University during his visit in June 2025, where a part of this work was done.
The research of the first named author was supported by JSPS KAKENHI Grant No.\ JP23K12955.

\section{Simple $(q,t)$-characters and Hernandez's conjecture}
\label{Sec:pre}

In this section, we fix notations and summarize some basic facts on the finite-dimensional representation theory of (untwisted) quantum loop algebras.
We recall the construction of the simple $(q,t)$-characters and state the Hernandez conjecture following \cite{Her}.   
There is no new result in this section.

\subsection{Notation}
Let $\fg$ be a complex finite-dimensional simple Lie algebra of type $\rX_n$ in the Cartan--Killing classification, where $n$ is the rank of $\fg$.
Let $r^\vee$ denote the lacing number of $\fg$, that is, $r^\vee = 1$, $2$ or $3$ according that $\rX \in \{\mathrm{A,D,E}\}$, $\rX \in \{\mathrm{B,C,F}\}$ or $\rX = \mathrm{G}$.
Let $\II$ be the set of nodes of the Dynkin diagram of $\fg$.
When $r^\vee=1$, we set $d_i = 1$ for all $i \in \II$.
When $r^\vee>1$, we set $d_i = 1$ or $r^\vee$ according that the $i$-th simple root is short or long.
Then, the Cartan matrix $C = (c_{ij})_{i,j \in \II}$ of $\fg$ is determined by
\[ c_{ij} = \begin{cases}
2 & \text{if $i=j$}, \\
-\lceil d_j/d_i \rceil & \text{if $i$ and $j$ are adjacent in the Dynkin diagram}, \\
0 & \text{otherwise},
\end{cases}\]
and we have $d_i c_{ij}= d_jc_{ji}$ for any $i,j \in \II$.  
We refer to $(d_{i})_{i \in \II}$ as the minimal left symmetrizer of $C$.
We write $h^\vee$ for the dual Coxeter number of $\fg$.

\begin{eg}\label{Ex:DynkinBC}
In this paper, we are mainly interested in the case when $\fg$ is of classical type, i.e., of type $\mathrm{X}_n$ with $\mathrm{X} \in \{ \mathrm{A,B,C,D}\}$.
In this case, we identify the set $\II$ with the set $\{1,2,\ldots, n \}$ so that the Dynkin diagram of $\fg$ is depicted as:  
\begin{center}
\begin{tikzpicture}
\tikzset{dynkdot/.style={circle,draw,scale=.38}}
\def\Aoffset{2}
\node (Andesc) at (7.5,\Aoffset) {if $\rX = \mathrm{A}$,};
\node[dynkdot,label={above:{$1$}}]  (A1) at (1,\Aoffset){};
\node[dynkdot,label={above:{$2$}}]  (A2) at (2,\Aoffset){};
\node[dynkdot,label={above:{$3$}}]  (A3) at (3,\Aoffset){};
\node (Adots) at (4, \Aoffset) {$\cdots$};
\node[dynkdot,label={above:{$n-1$}}]  (An-1) at (5,\Aoffset){};
\node[dynkdot,label={above:{$n$}}]  (An) at (6,\Aoffset){};
\draw[-] (A1) -- (A2) -- (A3) -- (Adots) -- (An-1) -- (An);

\def\Boffset{1}
\node (Bndesc) at (7.5,\Boffset) {if $\rX = \rB$,};
\node[dynkdot,label={above:{$1$}}]  (B1) at (1,\Boffset){};
\node[dynkdot,label={above:{$2$}}]  (B2) at (2,\Boffset){};
\node[dynkdot,label={above:{$3$}}]  (B3) at (3,\Boffset){};
\node (Bdots) at (4, \Boffset) {$\cdots$};
\node[dynkdot,label={above:{$n-1$}}]  (Bn-1) at (5,\Boffset){};
\node[dynkdot,label={above:{$n$}}]  (Bn) at (6,\Boffset){};

\draw[-] (B2) -- (B3) -- (Bdots) -- (Bn-1) -- (Bn);
\draw[-] (B1.30) -- (B2.150);
\draw[-] (B1.330) -- (B2.210);
\draw[-] (1.4,\Boffset) -- (1.6,\Boffset-.1);
\draw[-] (1.4,\Boffset) -- (1.6,\Boffset+.1);

\def\Coffset{0}
\node (Cndesc) at (7.5,\Coffset) {if $\rX=\rC$,};
\node[dynkdot,label={above:{$1$}}]  (C1) at (1,\Coffset){};
\node[dynkdot,label={above:{$2$}}]  (C2) at (2,\Coffset){};
\node[dynkdot,label={above:{$3$}}]  (C3) at (3,\Coffset){};
\node (Cdots) at (4, \Coffset) {$\cdots$};
\node[dynkdot,label={above:{$n-1$}}]  (Cn-1) at (5,\Coffset){};
\node[dynkdot,label={above:{$n$}}]  (Cn) at (6,\Coffset){};

\draw[-] (C2) -- (C3) -- (Cdots) -- (Cn-1) -- (Cn);
\draw[-] (C1.30) -- (C2.150);
\draw[-] (C1.330) -- (C2.210);
\draw[-] (1.6,\Coffset) -- (1.4,\Coffset+.1);
\draw[-] (1.6,\Coffset) -- (1.4,\Coffset-.1);

\def\Doffset{-1.5}
\node (Dndesc) at (7.5,\Doffset) {if $\rX = \mathrm{D}$.};
\node[dynkdot,label={above:{$1$}}]  (D1) at (1,\Doffset+0.5){};
\node[dynkdot,label={above:{$2$}}]  (D2) at (1,\Doffset-0.5){};
\node[dynkdot,label={above:{$3$}}]  (D3) at (2,\Doffset){};
\node[dynkdot,label={above:{$4$}}]  (D4) at (3,\Doffset){};
\node (Ddots) at (4, \Doffset) {$\cdots$};
\node[dynkdot,label={above:{$n-1$}}]  (Dn-1) at (5,\Doffset){};
\node[dynkdot,label={above:{$n$}}]  (Dn) at (6,\Doffset){};
\draw[-] (D1) -- (D3) -- (D4) -- (Ddots) -- (Dn-1) -- (Dn) (D2)--(D3);
\end{tikzpicture}
\end{center}
We have $r^\vee =1$ or $2$ according that $\mathrm{X} \in \{\mathrm{A,D}\}$ or $\mathrm{X} \in \{\mathrm{B,C}\}$, and 
\[ (d_1, d_2, d_3,\ldots, d_n) = \begin{cases}
(1,1,1,\ldots,1) & \text{if $\rX \in \{\mathrm{A,D}\},$}\\
(1,2,2,\ldots, 2) & \text{if $\rX=\rB$}, \\
(2,1,1,\ldots,1) & \text{if $\rX=\rC$}.
\end{cases}\]
The dual Coxeter number $h^\vee$ of $\fg$ is
\[ h^\vee = \begin{cases}
n+1 & \text{if $\rX = \mathrm{A}$}, \\
2n-1& \text{if $\rX=\rB$}, \\
n+1 & \text{if $\rX=\rC$}, \\
2n-2 & \text{if $\rX = \mathrm{D}$}.
\end{cases} \]
\end{eg}
 
\subsection{Quantum loop algebras}
Let $q \in \C^\times$ be a non-zero complex number which is not a root of unity.
Keeping the notation from the previous subsection,
we put $q_i \seq q^{d_i}$ for each $i \in \II$.

Associated to the simple Lie algebra $\fg$ of type $\rX_n$, we have the quantum loop algebra $\qlp$.
This is a Hopf algebra over $\C$, defined as a subquotient of the Drinfeld--Jimbo quantized enveloping algebra $U_q(\widehat{\fg})$ of the corresponding untwisted affine Kac--Moody algebra of type $\rX_n^{(1)}$ in the Kac classification \cite[\S4]{Kac90}.
As an associative $\C$-algebra, it has another presentation, called Drinfeld's loop realization, in terms of the generators $\{k_{i}^{\pm 1}, h_{i,m}, x^{\pm}_{i,l} \mid i\in \II, l,m \in \Z, m\neq 0\}$.
In particular, we have a large commutative subalgebra $U_q(L\fh)$ of $\qlp$ generated by $\{k_{i}^{\pm 1}, h_{i,m}\mid i\in \II, m \in \Z \setminus \{0\}\}$.

\subsection{Finite-dimensional simple modules}
We say that a $\qlp$-module is of type $\mathbf{1}$, if the action of the element $k_i$ is diagonalizable and all its eigenvalues belong to $\{q_i^l \mid l \in \Z\}$ for all $i \in \II$. 
Let $\Cc$ denote the category of finite-dimensional $\qlp$-modules of type $\mathbf{1}$.
It carries the structure of $\C$-linear rigid monoidal abelian  category. 

There is a highest weight type classification of simple modules in the category $\Cc$ due to Chari--Pressley \cite{CP, CP95}.
It tells us that there is a bijection between the simple isomorphism classes in $\Cc$ and $\II$-tuples of monic polynomials $\pi = (\pi_i(z))_{i \in \II} \in \Pi \seq (1+z\C[z])^{\II}$, called Drinfeld polynomials.
A simple module $L(\pi)$ corresponding to $\pi \in \Pi$ is characterized by the existence of a cyclic vector $v \in L(\pi)$ satisfying 
\begin{equation} \label{eq:lhw}
x_{i,l}^+ v = 0, \quad k_i v = q_i^{\lambda_i}v, \quad h_{i,m}v = \frac{q_i^m-q_i^{-m}}{m(q_i-q_i^{-1})}\sum_{r=1}^{\lambda_i} a_{i,r}^m
\end{equation}
 for any $i \in \II$ and $l, m \in \Z$ with $m \neq 0$,  where $\pi_i(z) = \prod_{r = 1}^{\lambda_i}(1-a_{i,r}z)$.
 Such a vector $v \in L(\pi)$ is unique up to scalar. 
 We call the vector $v$ an $\ell$-highest weight vector of $L(\pi)$.
 
 \subsection{The $q$-characters} \label{Ssec:qchar}
We recall the notion of $q$-character introduced by Frenkel--Reshetikhin \cite{FR}.
Consider a variable $Y_{i,a}$ for each $i \in \II$ and $a \in \C^\times$.
For each object $M \in \Cc$, its $q$-character $\chi_q(M)$ is a Laurent polynomial in the variables $Y_{i,a}$ encoding the spectral decomposition of $M$ with respect to the action of the commutative loop Cartan subalgebra $U_q(L\fh)$.
More precisely, it is known by \cite[Proposition 1]{FR} that the simultaneous eigenvalues of the actions of mutually commuting elements $k_i$ and $h_{i,m}$ ($i \in \II, m \in \Z\setminus \{0\}$) are respectively of the form  
\[ q_i^{\lambda_i-\mu_i} \quad \text{and} \quad \frac{q_i^m-q_i^{-m}}{m(q_i-q_i^{-1})}\left(\sum_{r=1}^{\lambda_i} a_{i,r}^m-\sum_{s=1}^{\mu_i} b_{i,s}^m \right) \]
for some $\lambda_i, \mu_i \in \Z_{\ge 0}$ and $a_{i,r}, b_{i,s} \in \C^\times$.
Then, the coefficient in $\chi_q(M)$ of the Laurent monomial $\prod_{i \in \II}(\prod_{r =1}^{\lambda_i}Y_{i, a_{i,r}} \prod_{s=1}^{\mu_i}Y^{-1}_{i,b_{i,s}})$ is defined to be the dimension of the corresponding simultaneous generalized eigenspace in $M$. 

Monomials in the variables $Y_{i,a}$ are called dominant monomials. 
By definition, a dominant monomial does not contain negative powers of $Y_{i,a}$. 
We have the natural isomorphism of multiplicative monoids between the set $\Pi$ of Drinfeld polynomials and the set of dominant monomials, given by the correspondence $(1-\delta_{i,j}az)_{j \in \II} \leftrightarrow Y_{i,a}$. 
In what follows, we identify these two sets through this isomorphism. 
In particular, for a dominant monomial $m$, we write $L(m)$ for the simple module $L(\pi)$ in the previous subsection when $m$ corresponds to $\pi$. 
By the characterization of $L(m)$, the dominant monomial $m$ occurs in the $q$-character $\chi_q(L(m))$ with multiplicity one.
The simple modules of the form $L(Y_{i,a})$ for some $i \in \II$ and $a \in \C^\times$ are called fundamental modules.

Let $K(\Cc)$ denote the Grothendieck ring of the monoidal abelian category $\Cc$.
\begin{prop}[{\cite[\S3.3]{FR}}]
The map $[M] \mapsto \chi_q(M)$ gives rise to an injective ring homomorphism
\[ \chi_q \colon K(\Cc) \hookrightarrow \Z[Y_{i,a}^{\pm 1} \mid i \in \II, a \in \C^\times].\]
In particular, the ring $K(\Cc)$ is commutative.
As a commutative ring, $K(\Cc) $ is freely generated by the classes of fundamental modules.
\end{prop}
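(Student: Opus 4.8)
The plan is to prove the three assertions in turn, the last two both resting on a single triangularity property of simple $q$-characters. First I would verify that $\chi_q$ is a ring homomorphism. Additivity along short exact sequences is immediate, since the dimension of each generalized simultaneous eigenspace for the commutative subalgebra $U_q(L\fh)$ is additive; thus $M \mapsto \chi_q(M)$ descends to a group homomorphism on $K(\Cc)$. Multiplicativity follows from the triangular form of the coproduct on the loop Cartan generating series $\phi^\pm_i(z)$ (namely $\Delta(\phi^\pm_i(z)) = \phi^\pm_i(z)\otimes\phi^\pm_i(z)$ up to corrections built from the $x^\pm_{j,l}$), which makes the generalized $\ell$-weight spaces of a tensor product $M\otimes N$ into tensor products of those of $M$ and of $N$ after the standard identification; counting dimensions gives $\chi_q(M\otimes N) = \chi_q(M)\chi_q(N)$. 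Hence $\chi_q$ is a ring homomorphism into the commutative ring $\Z[Y_{i,a}^{\pm 1}\mid i\in\II,\, a\in\C^\times]$, so once its injectivity is established the commutativity of $K(\Cc)$ follows.

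For injectivity, recall that by the Chari--Pressley classification together with the Jordan--H\"older theorem the classes $[L(m)]$, with $m$ running over dominant monomials, form a $\Z$-basis of $K(\Cc)$; thus it suffices to show that the Laurent polynomials $\chi_q(L(m))$ are $\Z$-linearly independent. Here I would introduce the Frenkel--Reshetikhin monomials $A_{i,a}$ (the $q$-character counterparts of the simple roots) and the partial order in which $m'\preceq m$ means that $m(m')^{-1}$ is a product of the $A_{i,a}$ with non-negative exponents. Applying the group homomorphism from the group of monomials to the weight lattice sending $Y_{i,a}$ to the fundamental weight $\varpi_i$ (so that $A_{i,a}\mapsto\alpha_i$), and using the linear independence of the simple roots, one checks that $\preceq$ is genuinely antisymmetric and that $m'\prec m$ forces $\Wt(m')<\Wt(m)$ in the dominance order. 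The decisive input --- and the only place where the highest-$\ell$-weight structure \eqref{eq:lhw} really enters --- is the standard fact that every monomial occurring in $\chi_q(L(m))$ is $\preceq m$, together with the multiplicity-one statement for $m$ recalled above. Granting this, a leading-term argument closes the case: in a relation $\sum_m c_m\chi_q(L(m)) = 0$, pick $m_0$ maximal for $\preceq$ among the $m$ with $c_m\neq 0$, and read off the coefficient of the monomial $m_0$ to get $c_{m_0} = 0$, a contradiction.

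For the freeness statement, now that $K(\Cc)$ is known to be commutative I would study the ring homomorphism $\phi\colon\Z[z_{i,a}\mid i\in\II,\, a\in\C^\times]\to K(\Cc)$ determined by $\phi(z_{i,a}) = [L(Y_{i,a})]$. For a monomial $n = \prod z_{i,a}^{u_{i,a}}$ set $m(n) = \prod Y_{i,a}^{u_{i,a}}$; by multiplicativity of $\chi_q$ and the triangularity above, $\chi_q(\phi(n))$ equals $m(n)$ plus a $\Z$-linear combination of monomials $\prec m(n)$. Since $n\mapsto m(n)$ is injective, the same leading-term argument as before shows that $\chi_q\circ\phi$, hence $\phi$, is injective. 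For surjectivity it is enough to prove $[L(m)]\in\im\phi$ for every dominant monomial $m$, and I would argue by induction on $\Wt(m)$ along the dominance order, which is well-founded on dominant weights. Taking $n$ with $m(n) = m$, triangularity and injectivity of $\chi_q$ yield in $K(\Cc)$ a finite identity $\phi(n) = [L(m)] + \sum_{m'\prec m} d_{m'}[L(m')]$ with $d_{m'}\in\Z_{\geq 0}$ (these being Jordan--H\"older multiplicities); each $[L(m')]$ here lies in $\im\phi$ by the inductive hypothesis since $\Wt(m')<\Wt(m)$, and therefore $[L(m)]\in\im\phi$ as well.

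I expect the main obstacle to be the triangularity property itself --- controlling precisely which $\ell$-weights can occur in a highest-$\ell$-weight module $L(m)$, namely that all of them are $\preceq m$ while $m$ occurs with multiplicity one. Once this (classical) input is in hand, the rest is either a formal consequence of the coproduct formulas or elementary bookkeeping with the order $\preceq$ and the Chari--Pressley classification.
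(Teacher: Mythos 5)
The paper itself does not prove this proposition; it cites Frenkel--Reshetikhin \cite{FR} and takes the result as given, so there is no internal proof to compare against. Your reconstruction is mathematically sound and follows the standard line: (1) the ring homomorphism property from the ``upper-triangular'' coproduct on the Drinfeld currents $\phi_i^\pm(z)$, and (2)--(3) a leading-term argument over a partial order compatible with the highest-$\ell$-weight structure, combined with the Chari--Pressley classification and induction for surjectivity. The bookkeeping --- antisymmetry of the order, well-foundedness, multiplicativity of the leading terms --- is all correct.

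One remark worth making. The triangularity you invoke as the ``standard fact'' --- that every monomial of $\chi_q(L(m))$ is $\le m$ in the Nakajima order --- is precisely Theorem~\ref{thm:FM}(2), i.e.\ the Frenkel--Mukhin theorem \cite{FM}; it postdates \cite{FR} and is far from an elementary consequence of \eqref{eq:lhw}, so calling it ``classical'' is an understatement. It is also more than is needed. For steps (2) and (3) a strictly weaker, genuinely elementary input suffices: since $x^+_{i,l}v=0$ for all $i,l$, PBW gives $L(m)=U^-v$, so every \emph{ordinary} weight of $L(m)$ lies below $\Wt(m)$ in dominance, the $\Wt(m)$-weight space is one-dimensional, and its $\ell$-weight is $m$. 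Running your leading-term and induction arguments with the dominance order on $\Wt(m)$ in place of the Nakajima order on $m$ gives the same conclusions (noting that the top-weight space of a tensor product of highest-$\ell$-weight modules is one-dimensional with the product $\ell$-weight). Your version is correct but carries a heavier black box than necessary, and this distinction is historically relevant since \cite{FR} could not have used \cite{FM}.
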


For each $i \in \II$ and $a \in \C^\times$, we consider the Laurent monomial
\begin{equation} \label{eq:defA}
A_{i,a} \seq Y_{i,aq_i^{-1}}Y_{i,aq_i} \prod_{j \in \II, c_{ji}=-1} Y_{j,a}^{-1} \prod_{j \in \II, c_{ji}=-2} Y_{j,aq^{-1}}^{-1}Y_{j,aq}^{-1} \prod_{j \in \II, c_{ji}=-3}Y_{j,aq^{-2}}^{-1}Y_{j,a}^{-1}Y_{j,aq^2}^{-1}.\end{equation}
This is an analog of the $i$-th simple root.
For Laurent monomials $m, m'$ in the variables $Y_{i,a}$, we write $m \le m'$ if $m'm^{-1}$ is a monomial in these $A_{i,a}$'s. 
This defines a partial ordering among Laurent monomials, called the Nakajima ordering.

\begin{thm}[\cite{FM}] \label{thm:FM}
The following statements hold.
\begin{itemize}
\item[$(1)$] The image of the $q$-character map is equal to the subring
\[ \bigcap_{i \in \II} \Z[Y_{i,a}(1+A_{i,aq_i}^{-1}), Y^{\pm 1}_{j,a} \mid j \in \II \setminus \{i\}, a \in \C^\times ].\] 
\item[$(2)$] Let $m$ be a dominant monomial. If a Laurent monomial $m'$ occurs in $\chi_q(L(m)) - m$, we have $m' < m$ with respect to the Nakajima ordering.  
\end{itemize}
\end{thm}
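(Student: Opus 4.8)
The plan is to isolate a single structural fact about how the Drinfeld generators $x_{i,l}^{\pm}$ move $\ell$-weights, and run both parts off of it together with the (elementary) rank-one case, in the spirit of Frenkel--Reshetikhin and Frenkel--Mukhin. For $i\in\II$ set $\mathfrak K_i:=\Z[\,Y_{i,a}(1+A_{i,aq_i}^{-1}),\,Y_{j,a}^{\pm1}\mid j\in\II\setminus\{i\},\,a\in\C^\times\,]$, so the subring in $(1)$ is $\bigcap_{i\in\II}\mathfrak K_i$. The key input, obtained by commuting the Drinfeld Cartan generating series $\phi_j^{\pm}(z)$ past $x_i^{\mp}(w)$, is: for every $M\in\Cc$, $i\in\II$ and $l\in\Z$, the operator $x_{i,l}^{\mp}$ sends each $\ell$-weight space $M_{m''}$ into $\bigoplus_{c\in\C^\times}M_{\,m''A_{i,c}^{\mp1}}$, with $A_{i,c}$ the monomial \eqref{eq:defA}; crucially the shift is by the \emph{whole} monomial $A_{i,c}^{\mp1}$, including its variables $Y_{j,\bullet}$ with $j$ adjacent to $i$ --- this last point being forced by the $\phi_j^{\pm}$-relations for those $j$.

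\emph{Proof of $(2)$.} Let $v\in L(m)$ be the $\ell$-highest weight vector. Since $L(m)$ is simple, $L(m)=U^{-}v$, where $U^{-}\subseteq\qlp$ is generated by the $x_{i,l}^{-}$. By the key input, each monomial $x_{i_1,l_1}^{-}\cdots x_{i_k,l_k}^{-}v$ lies in the sum of the $\ell$-weight spaces $L(m)_{m'}$ over $m'$ of the form $m\prod_{t=1}^{k}A_{i_t,c_t}^{-1}$ with $c_t\in\C^\times$; as these vectors span $L(m)$, every $\ell$-weight $m'$ of $L(m)$ has this form, hence $m'\le m$. Moreover a nonempty product of $A_{i_t,c_t}^{-1}$ has nonzero weight $-\sum_t\alpha_{i_t}$, so $m'=m$ forces $k=0$; thus $m'<m$ whenever $m'\neq m$, which is $(2)$ (consistently with the multiplicity-one of $m$ recalled above).

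\emph{Proof of $(1)$.} \textit{(i) $\Image(\chi_q)\subseteq\bigcap_i\mathfrak K_i$.} Fix $i$ and let $U_i\subseteq\qlp$ be the rank-one subalgebra generated by $x_{i,l}^{\pm},h_{i,m},k_i^{\pm1}$, a copy of $U_{q_i}(L\mathfrak{sl}_2)$ modulo central elements. By the key input (both signs), the $\ell$-weights occurring in $\chi_q(M)$ that are linked by the $U_i$-action fall into ``$A_{i,\bullet}$-strings'' on which the $Y_{i,\bullet}$-submonomial is a genuine $\mathfrak{sl}_2$-$q$-character (rank-one case) while all other variables co-vary exactly as prescribed by the $A_{i,c}^{\pm1}$; a direct inspection of the generators of $\mathfrak K_i$ shows this is precisely membership in $\mathfrak K_i$. \textit{(ii) The Nakajima-maximal monomials of any $P\in\bigcap_i\mathfrak K_i$ are dominant.} If a maximal monomial $m_0$ contained a negative power of some $Y_{j,a}$, then $m_0$ would sit on an $A_{j,\bullet}$-string of $P$ which (by $P\in\mathfrak K_j$) is an $\mathfrak{sl}_2$-$q$-character string; rank-one dominance then produces a monomial $m_0 A_{j,b}>m_0$ in $P$, contradicting maximality. \textit{(iii) Conclusion.} The classes $[L(m)]$, $m$ dominant, form a $\Z$-basis of $K(\Cc)$ and $\chi_q$ is injective, so $\Image(\chi_q)=\bigoplus_{m}\Z\,\chi_q(L(m))$; by (i) and $(2)$ each $\chi_q(L(m))$ lies in $\bigcap_i\mathfrak K_i$ and equals $m$ plus Nakajima-lower terms. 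Given $0\neq P\in\bigcap_i\mathfrak K_i$, pick a maximal monomial $m_0$ --- dominant by (ii) --- with coefficient $c$; then $P-c\,\chi_q(L(m_0))\in\bigcap_i\mathfrak K_i$ no longer involves $m_0$, involves only monomials $\le$ some monomial of $P$, and one checks $m_0$ never reappears at a later stage. Since only finitely many dominant monomials lie below a given one, iterating terminates and writes $P$ as a $\Z$-combination of the $\chi_q(L(m))$; hence $\Image(\chi_q)=\bigcap_i\mathfrak K_i$.

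\emph{Main obstacle.} Granted the key input, everything above is formal, so the real difficulty is exactly that input: that $x_{i,l}^{\mp}$ shifts the \emph{entire} $\ell$-weight monomial --- not just its $Y_{i,\bullet}$-part --- by $A_{i,c}^{\mp1}$. The $Y_{i,\bullet}$-part is the immediate quantum-$\mathfrak{sl}_2$ assertion; pinning down the $Y_{j,\bullet}$-parts for $j$ adjacent to $i$ requires the commutation relations between $\phi_j^{\pm}(z)$ and $x_i^{\mp}(w)$ in Drinfeld's presentation and a careful matching of the resulting poles and zeros with the factors built into \eqref{eq:defA}. This is the one place where the precise shape of $A_{i,a}$ is genuinely used, and it is the technical heart of the Frenkel--Reshetikhin/Frenkel--Mukhin analysis.
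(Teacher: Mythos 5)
The paper does not give its own proof of this theorem; it simply imports it from Frenkel--Mukhin \cite{FM}. So what follows is an assessment of your sketch against the known argument in that source.

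Your sketch follows the Frenkel--Mukhin strategy faithfully: the single ``key input'' you isolate (that $x^{\mp}_{i,l}$ shifts a whole $\ell$-weight monomial by some $A_{i,c}^{\mp 1}$, including the $Y_{j,\bullet}$-factors with $j$ adjacent to $i$) is exactly the Frenkel--Reshetikhin lemma on which \cite{FM} rests, and you correctly flag it as the technical core. Part $(2)$ is then the standard argument, provided you also invoke the Drinfeld triangular decomposition/PBW for $\qlp$ to justify $L(m)=U^{-}v$ (this is what turns ``$L(m)$ is cyclic on $v$'' into ``$L(m)$ is spanned by $x^{-}_{i_1,l_1}\cdots x^{-}_{i_k,l_k}v$''); you should say so rather than assert it. Part $(1)$(i), the inclusion $\Image(\chi_q)\subseteq\bigcap_i\mathfrak K_i$, is the usual restriction-to-$U_{q_i}(L\mathfrak{sl}_2)$ argument and is fine as a sketch.

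Two points deserve more care. In $(1)$(ii) the ``$A_{j,\bullet}$-string'' reasoning is too loose as stated: membership of $P$ in $\mathfrak K_j$ expresses $P$ as a polynomial in the generators $Y_{j,a}(1+A_{j,aq_j}^{-1})$ and $Y_{k,a}^{\pm1}$, and in the resulting expansion there can be cancellation, so one cannot conclude directly that a monomial $m_0$ with a negative $Y_{j,\bullet}$-power is accompanied in $P$ by $m_0 A_{j,b}$. The clean way to get dominance of maximal monomials is to first prove the rank-one case of $(1)$ (elementary), write $P$ (grouped into $U_j$-strings) as a $\Z$-linear combination of rank-one simple $q$-characters with $Y_{k,\bullet}$-monomial coefficients, and then read off that every monomial of $P$ sits below a $j$-dominant monomial that actually survives in $P$; iterating over $j$ gives a dominant maximal monomial. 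This is what \cite{FM} does in effect, and you should replace the cancellation-free ``string'' claim by this basis argument. In $(1)$(iii), the termination needs the observation that the spectral parameters occurring in $P$ and in all the $\chi_q(L(m_k))$ you subtract lie in a fixed finitely generated multiplicative subgroup of $\C^\times$ (this is where one passes, in effect, to a skeletal subcategory like $\Cc_\Z$), after which only finitely many dominant monomials sit below a given one; without this remark ``finitely many dominant monomials lie below a given one'' is not literally true over all of $\C^\times$. With those two repairs your outline matches the \cite{FM} proof.
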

 
\subsection{Skeletal subcategory $\Cc_\Z$} \label{Ssec:HLcat}
Consider the subring 
\[ \cY \seq \Z[Y_{i,q^p}^{\pm 1} \mid i \in \II, p \in \Z ]\]
of $\Z[Y_{i,a}^{\pm 1} \mid i \in \II, a \in \C^\times]$.
Let $\cM$ denote the subset of $\cY$ consisting of all the Laurent monomials and $\cM_{+}$ the subset of $\cM$ consisting of dominant monomials. 
Note that $A_{i,q^p}$ belongs to $\cM$ for any $(i,p) \in \II \times \Z$.
For Laurent monomials $m, m' \in \cM$, we have $m \le m'$ with respect to the Nakajima ordering if and only if $m'm^{-1}$ is a monomial in these $A_{i,q^p}$'s. 

We define the category $\Cc_{\Z}$ to be the Serre subcategory of $\Cc$ generated by the simple modules $L(m)$ with $m \in \cM_+$.
The category $\Cc_\Z$ is closed under tensor product, and can be thought of as a monoidal skeleton of the category $\Cc$.
In fact, any simple module in $\Cc$ factorizes into a tensor product of some spectral parameter shifts of simple modules in $\Cc_\Z$. 
(This follows from the argument in \cite[\S3.7]{HL10}. Note that our category $\Cc_\Z$ is slightly larger than the category $\Cc_\Z$ considered therein.)

For the sake of simplicity, in what follows, we set
\[ Y_{i,p} \seq Y_{i,q^p}, \qquad A_{i,p} \seq A_{i,q^p}\]
for each $(i,p) \in \II \times \Z$.
Although this is an abuse of notation, it would not make any confusion thanks to the following fact.

\begin{prop} \label{prop:FM_CZ}
The $q$-character map restricts to an injective ring homomorphism
\[ \chi_q \colon K(\Cc_\Z) \hookrightarrow \cY\]
and its image is equal $\bigcap_{i \in \II} K_i$, where $K_i$ is the subring of $\cY$ generated by the subset
\begin{equation} \label{eq:Ki}
\{Y_{i,p}(1+A_{i,p+d_i}^{-1}) \mid p \in \Z\} \cup \{Y^{\pm 1}_{j,s} \mid j \in \II\setminus\{i\}, s \in \Z  \}. 
\end{equation}
\end{prop}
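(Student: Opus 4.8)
The plan is to read off Proposition~\ref{prop:FM_CZ} as the restriction to the subring $\cY$ of the global Frenkel--Mukhin description of $\operatorname{Im}\chi_q$ recorded in Theorem~\ref{thm:FM}. First I would dispose of injectivity: since $\Cc_\Z$ is a full Serre subcategory of $\Cc$ closed under tensor product, the inclusion induces an injective ring homomorphism $K(\Cc_\Z)\hookrightarrow K(\Cc)$ --- the Grothendieck group of $\Cc_\Z$ being the $\Z$-span of $\{[L(m)]\mid m\in\cM_+\}$, a sub-basis of $\{[L(\pi)]\mid\pi\in\Pi\}$ --- and postcomposing with the injective $q$-character map on $K(\Cc)$ recalled above gives injectivity on $K(\Cc_\Z)$. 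Next, $K(\Cc_\Z)$ is spanned by the classes $[L(m)]$ with $m\in\cM_+$, and by Theorem~\ref{thm:FM}(2) every Laurent monomial $m'$ occurring in $\chi_q(L(m))$ satisfies $m'\le m$, i.e.\ $m'=m\prod_{i,p}A_{i,p}^{-n_{i,p}}$ with $n_{i,p}\in\Z_{\ge 0}$; as $\cM$ is a multiplicative monoid containing $m$ and all the $A_{i,p}$, we get $m'\in\cM\subseteq\cY$, whence $\chi_q(K(\Cc_\Z))\subseteq\cY$.

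For the inclusion $\chi_q(K(\Cc_\Z))\subseteq\bigcap_{i\in\II}K_i$: fix $i$, and recall from Theorem~\ref{thm:FM}(1) that $\chi_q(L(m))$ lies in $R_i:=\Z[\,Y_{i,a}(1+A_{i,aq_i}^{-1}),\ Y_{j,a}^{\pm1}\mid j\neq i,\ a\in\C^\times\,]$. Decomposing $\C^\times$ into $q^{\Z}$-cosets presents $\Z[Y_{i,a}^{\pm1}\mid i\in\II,\,a\in\C^\times]$ as a restricted tensor product over the cosets, and this is compatible with $R_i$; since with $a=q^p$ we have $A_{i,aq_i}=A_{i,p+d_i}$, which involves only variables attached to $q^{\Z}$ (compare~\eqref{eq:defA} and~\eqref{eq:Ki}), the $q^{\Z}$-component of $R_i$ is exactly $K_i$. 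As $\chi_q(L(m))\in\cY$ is supported on the $q^{\Z}$-coset, it must therefore lie in $K_i$; intersecting over $i$ gives this inclusion of images.

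The reverse inclusion $\bigcap_i K_i\subseteq\chi_q(K(\Cc_\Z))$ I would obtain by the standard Frenkel--Mukhin recovery procedure, run inside $\cY$: given $0\neq P\in\bigcap_i K_i$, choose a monomial $m_0$ of $P$ maximal for the Nakajima order; for each $i$, membership $P\in K_i$ together with the explicit shape of the generator $Y_{i,p}(1+A_{i,p+d_i}^{-1})$ forces $m_0$ to contain no negative power of any $Y_{i,s}$, so $m_0$ is dominant, and $P\in\cY$ then gives $m_0\in\cM_+$; subtracting $c\cdot\chi_q(L(m_0))$, where $c$ is the coefficient of $m_0$ in $P$, leaves an element again in $\bigcap_i K_i$ (by the previous paragraph) in which $m_0$ no longer appears (it occurs in $\chi_q(L(m_0))$ with multiplicity one) and all of whose monomials are $\le$ some monomial of $P$ other than $m_0$ (Theorem~\ref{thm:FM}(2)); iterating over the finite family of monomials of bounded $A$-degree lying below the monomials of $P$ expresses $P$ as a $\Z$-linear combination of the $\chi_q(L(m))$, $m\in\cM_+$. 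The hard part will be exactly this last paragraph --- showing that a Nakajima-maximal monomial of an element of $\bigcap_i K_i$ is necessarily dominant is the technical heart of the Frenkel--Mukhin method, resting on the precise form of the generators of $K_i$, and one must also check that the recovery procedure terminates (well-foundedness of the Nakajima order on the relevant finite sets of monomials). By contrast, the parameter reduction from $\C^\times$ to $q^{\Z}$ in the middle paragraph is comparatively routine.
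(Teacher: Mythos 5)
Your plan coincides with the paper's own proof, which is a one-line citation to Theorem~\ref{thm:FM} together with \cite[Proposition 5.8]{HL10}; the latter supplies precisely the $q^\Z$-restriction you carry out by hand, so you are following the same route, just unpacking it. The injectivity step and the coset-decomposition argument for the forward inclusion $\chi_q(K(\Cc_\Z))\subseteq\bigcap_i K_i$ are both fine. One small gap in the first paragraph: in deducing $\chi_q(L(m))\in\cY$ from Theorem~\ref{thm:FM}(2) you silently rewrite $m'\le m$ as $m'=m\prod_{i,p}A_{i,p}^{-n_{i,p}}$ with $p\in\Z$, but the Nakajima order in Theorem~\ref{thm:FM}(2) allows $A_{i,a}^{-1}$ for arbitrary $a\in\C^\times$; to conclude that only $a\in q^\Z$ can occur (so that $m'\in\cM$), one must invoke the algebraic independence of the family $\{A_{i,a}\}_{(i,a)\in\II\times\C^\times}$, which separates the contributions from distinct $q^\Z$-cosets. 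This is routine but should be stated.

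The substantive gap is the reverse inclusion $\bigcap_i K_i\subseteq\chi_q(K(\Cc_\Z))$. You honestly flag that showing a Nakajima-maximal monomial of an element of $\bigcap_i K_i$ is dominant, and that the recovery procedure terminates, are ``the hard part'' and you only sketch them; but these are not loose ends --- they are the mathematical content of the proposition, which the paper disposes of by citing Theorem~\ref{thm:FM}(1) and \cite[Proposition 5.8]{HL10}. You could also avoid redoing the Frenkel--Mukhin recovery from scratch: since $K_i$ is contained in the global ring $R_i$ you introduce, $\bigcap_i K_i\subseteq\bigcap_i R_i=\chi_q(K(\Cc))$ follows immediately from Theorem~\ref{thm:FM}(1), and what remains is only to show that an element of $\chi_q(K(\Cc))\cap\cY$ is a $\Z$-linear combination of $\chi_q(L(m))$ with $m\in\cM_+$. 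That can be done by triangularity of the simple $q$-characters against the Nakajima order together with the fact (from your first paragraph) that $\chi_q(L(m))\in\cY$ for $m\in\cM_+$, and is closer to the role the cited proposition plays in the paper's one-line proof.
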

\begin{proof}
It follows from Theorem~\ref{thm:FM} and \cite[Proposition 5.8]{HL10}. 
\end{proof}

\subsection{Quantum Grothendieck rings}\label{Ssec:QGR}
In this subsection, we recall the construction of quantum Grothendieck ring $K_t(\Cc_\Z)$, a deformation of the Grothendieck ring $K(\Cc_\Z)$ introduced by \cite{Nakajima04, VVq, Her}.
We follow the formulation of \cite{Her}.

Recall that $C=(c_{ij})_{i,j \in I}$ denotes the Cartan matrix of our simple Lie algebra $\fg$.
Following \cite{FR}, consider the deformed Cartan matrix $C(z) = (C_{ij}(z))_{i,j \in \II}$ given by
\[ C_{ij}(z) \seq \begin{cases}
z^{d_i} +z^{-d_i} & \text{if $i=j$}, \\
(z^{c_{ij}}-z^{-c_{ij}})/(z-z^{-1}) & \text{if $i\neq j$}.
\end{cases}\] 
Having a non-zero determinant, it belongs to $GL_{\II}(\Q(z))$.
Let $C'(z) \seq C(z)^{-1}$ denote its inverse and 
\[ 
C'_{ij}(z) = \sum_{u \in \Z} c'_{ij}(u) z^u   \quad \in \Z[\!z]\!] 
\] 
the Taylor expansion of the $(i,j)$-entry $C'_{ij}(z)$ of $C'(z)$ at $z=0$. 
All the coefficients $c'_{ij}(u)$ ($i,j \in I, u \in \Z$) are known to be integers.

Recall the multiplicative abelian group $\cM \subset \cY$ of Laurent monomials from the previous subsection.
We define a skew-symmetric bilinear map $\gamma \colon \cM \times \cM \to \Z$ by $\gamma(Y_{i,p}, Y_{j,s}) = \gamma_{ij}(p-s)$ for any $(i,p), (j,s) \in \II \times \Z$, where we set
\[ \gamma_{ij}(u) \seq c'_{ij}(u-d_i) - c'_{ij}(u+d_i) - c'_{ij}(-u-d_i) + c'_{ij}(-u+d_i).\]

Let $t$ be an indeterminate with a formal square root $t^{1/2}$. 
We equip the free $\Z[t^{\pm 1/2}]$-module 
\[\cY_t \seq \cY \otimes_\Z \Z[t^{\pm 1/2}] = \bigoplus_{m \in \cM}\Z[t^{\pm 1/2}]m\]
with an associative $\Z[t^{\pm 1}]$-bilinear multiplication $*$ by
\[ m * m' \seq t^{\gamma(m,m')/2}mm'\]
for any $m,m' \in \cM$.
The resulting $\Z[t^{\pm1/2}]$-algebra $(\cY_t, *)$ is a quantum torus algebra deforming $\cY$.
For future use, we note the following formulas:
\begin{equation}\label{eq:gamma}
\gamma(A_{i,p}, A_{j,s}) = 2(\delta_{p-s, d_ic_{ij} } - \delta_{p-s,-d_ic_{ij} }), \qquad 
\gamma(A_{i,p}, Y_{j,s}) = 2\delta_{i,j}(\delta_{p-s,d_i} - \delta_{p-s,-d_i})
\end{equation}
for any $(i,p), (j,s) \in \II \times \Z$ (cf.\ \cite[Proposition 3.12]{Her} or \cite[Proposition 4.7]{HO}).

The specialization $t^{1/2} \mapsto 1$ gives rise to the ring homomorphism $\ev_{t=1} \colon (\cY_t, *) \to \cY$. 
We define the bar-involution of $(\cY_t, *)$ to be the anti-involution $y \mapsto \overline{y}$ satisfying $\overline{t^{1/2}} = t^{-1/2}$ and $\overline{m} = m$ for any $m \in \cM$. 
  
 For each $i \in \II$, let $K_{i,t}$ be the $\Z[t^{\pm 1/2}]$-subalgebra of the quantum torus $(\cY_t, *)$ generated by the subset \eqref{eq:Ki}. 
 Following Hernandez \cite{Her}, we define the quantum Grothendieck ring $K_t(\Cc_\Z)$ of the category $\Cc_\Z$ to be
\[ K_t(\Cc_\Z) \seq \bigcap_{i \in \II} K_{i,t}. \] 
This is a $\Z[t^{\pm 1/2}]$-subalgebra of the quantum torus $(\cY_t, *)$, stable under the bar-involution. 
By Proposition \ref{prop:FM_CZ}, we have the following commutative diagram:
\[ \xymatrix{ 
K_t(\Cc_\Z) \ar@{^{(}->}[r] \ar[d]_-{\ev_{t=1}} & (\cY_t,*) \ar[d]^-{\ev_{t=1}}\\
\chi_q(K(\Cc_\Z)) \ar@{^{(}->}[r] & \cY,
}\]  
where the horizontal hooked arrows denote the inclusions.

\subsection{Simple $(q,t)$-characters}\label{Ssec:chiqt}
In this subsection, we recall the construction of the $(q,t)$-characters of simple representations in the category $\Cc_\Z$ following \cite{Her}.

\begin{thm}[{\cite[Theorem 5.11]{Her}, see also \cite[Remark 7.2]{HO}}]\label{Thm:F_t}
For each dominant monomial $m \in \cM_+$, there exists a unique element $F_t(m) \in K_t(\Cc_\Z)$ of the form
\begin{equation} 
F_t(m) = m + \sum_{m' \in \cM \setminus \cM_+, m' < m} s_m(m') m' 
\end{equation}
with $s_m(m') \in \Z[t^{\pm 1}]$.
Moreover, the set $\{ F_t(m) \mid m \in \cM_+\}$ is a free $\Z[t^{\pm 1/2}]$-basis of $K_t(\Cc_\Z)$.
\end{thm}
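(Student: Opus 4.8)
The plan is to follow the Kazhdan--Lusztig type strategy of \cite{Nakajima04, VVq, Her}: one builds the elements $F_t(m)$ --- together with the basis property --- in three stages, from fundamental monomials, to ``standard'' elements attached to arbitrary dominant monomials, to the final triangular correction. Throughout, the Nakajima ordering $\le$ on $\cM$ and the triangularity of $q$-characters in Theorem~\ref{thm:FM}(2) play the role that the Bruhat order and the triangularity of Verma modules play in the classical Kazhdan--Lusztig picture; note in particular that $\le$ is compatible with multiplication in $\cM$.

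\emph{Step 1: fundamental monomials.} For $i \in \II$ and $p \in \Z$ the fundamental module $L(Y_{i,p})$ is \emph{special} (Frenkel--Mukhin), i.e.\ $Y_{i,p}$ is the unique dominant monomial of $\chi_q(L(Y_{i,p}))$. I would run the Frenkel--Mukhin $q$-character algorithm --- which reconstructs a special $q$-character from its highest monomial using only the image description of Theorem~\ref{thm:FM}(1) --- but now inside the quantum torus $(\cY_t,*)$: starting from $Y_{i,p}$, at each stage and for each $j \in \II$, membership of the element produced so far in the subalgebra $K_{j,t}$ (generated by the $Y_{j,s}(1+A_{j,s+d_j}^{-1})$ and the $Y_{k,s}^{\pm 1}$, $k \ne j$) forces the coefficients of the newly required monomials $m'A_{j,\bullet}^{-1}$ to prescribed values in $\Z[t^{\pm 1/2}]$. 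Specialness --- a statement at $t=1$, hence available --- guarantees that these prescriptions never conflict, so the procedure terminates and yields a well-defined element $F_t(Y_{i,p}) \in \bigcap_{j\in\II} K_{j,t} = K_t(\Cc_\Z)$ of the required shape. (Equivalently, one may quote the construction of $\chi_{q,t}$ of fundamental modules from \cite{Nakajima04, Her}.)

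\emph{Steps 2 and 3: standard elements, correction, uniqueness, basis.} Fix $m \in \cM_+$ and write $m = Y_{i_1,p_1}\cdots Y_{i_N,p_N}$ with $p_1 \ge \cdots \ge p_N$; set $E_t(m) \seq t^{-e/2}\,F_t(Y_{i_1,p_1})*\cdots*F_t(Y_{i_N,p_N}) \in K_t(\Cc_\Z)$, where $Y_{i_1,p_1}*\cdots*Y_{i_N,p_N} = t^{e/2}m$. Applying Theorem~\ref{thm:FM}(2) to each factor and using that $\le$ is monoid-compatible, $E_t(m) = m + \sum_{m' < m}(\cdots)m'$ with coefficients in $\Z[t^{\pm 1/2}]$, and $E_t(Y_{i,p}) = F_t(Y_{i,p})$ by specialness. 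Now define $F_t(m)$ by induction along $\le$, using that $\{m' \in \cM_+ : m' \le m\}$ is finite: all dominant monomials of $E_t(m)$ other than $m$ are $<m$, and processing them top-down in $\le$, subtracting suitable $\Z[t^{\pm 1/2}]$-multiples of the already-constructed $F_t(m')$ removes them one by one (each step replaces a bad monomial by strictly $\le$-smaller ones, so this terminates), leaving $F_t(m) \in K_t(\Cc_\Z)$ of the stated triangular form. Uniqueness: two such elements differ by an element of $K_t(\Cc_\Z)$ whose monomial expansion contains no dominant monomial, and the Frenkel--Mukhin argument --- that every $\le$-maximal monomial of a nonzero element of $\bigcap_j K_{j,t}$ is dominant --- carries over verbatim to $(\cY_t,*)$ and forces this difference to vanish. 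The same maximal-monomial lemma gives the basis property: $\Z[t^{\pm 1/2}]$-linear independence of $\{F_t(m)\}$ is immediate from triangularity, and for spanning one subtracts from any $z \in K_t(\Cc_\Z)$ the terms $c_m(t)F_t(m)$ over its (necessarily dominant) $\le$-maximal monomials $m$ and induces. Finally, to upgrade $s_m(m') \in \Z[t^{\pm 1/2}]$ to $\Z[t^{\pm 1}]$ one invokes bar-invariance: $\overline{E_t(m)} = E_t(m) + \sum_{m'<m} r_{m,m'}(t)E_t(m')$ with $r_{m,m}=1$, and the standard Kazhdan--Lusztig lemma produces a unique bar-invariant solution, which by the uniqueness above is $F_t(m)$ and has all monomial coefficients automatically in $\Z[t^{\pm 1}]$.

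\emph{Main obstacle.} The heart of the matter is Step~1: the consistency of the $t$-deformed Frenkel--Mukhin algorithm for fundamental modules --- that it terminates and is independent of the order in which monomials are processed. This rests essentially on the specialness of fundamental modules together with the compatibility of the quantum-torus relations \eqref{eq:gamma} with the recursion, so that no numerical contradiction and no spurious half-integral power of $t$ is forced. Once Step~1 is in place, everything downstream is a formal triangularity/Kazhdan--Lusztig argument.
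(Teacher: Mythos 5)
Your three-stage plan (fundamental monomials $\to$ standard elements $E_t(m)$ $\to$ triangular correction) is a legitimate alternative to the route the paper follows, but it has one genuine error at the end. The paper (recalling \cite[Theorem 5.11]{Her}) constructs $F_t(m)$ for \emph{every} dominant $m$ by running the $t$-analog Frenkel--Mukhin algorithm directly, as spelled out in \S\ref{Ssec:tFM}: one enumerates the set $D(m)$ of reachable monomials in a total order refining $\le$, and at each step the coefficient $s(m_{k+1})$ is forced by the requirement of membership in $K_{i,t}$ for each $i$ with $m_{k+1}\notin\cM_{i,+}$; the whole argument hinges on the consistency lemma \cite[Lemma 5.22]{Her} (that the answers from different $i$ agree), which is the real content. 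In that account the building blocks $F_{i,t}(m_u)$ already have coefficients in $\Z[t^{\pm 1}]$, so one gets $s_m(m')\in\Z[t^{\pm 1}]$ directly, with no upgrading step. Your route instead restricts the algorithm to fundamentals, forms $E_t(m)$ as a twisted product, and corrects triangularly; the uniqueness and basis arguments via the ``$\le$-maximal monomial is dominant'' lemma are fine.

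The error is in your final upgrading step. The bar-invariant element $E_t(m)+\sum_{m'<m}Q_{m,m'}(t)E_t(m')$ with $Q_{m,m'}\in t^{-1}\Z[t^{-1}]$ produced by the Kazhdan--Lusztig lemma is $\chi_{q,t}(L(m))$, \emph{not} $F_t(m)$: these two elements satisfy genuinely different triangularity conditions and do not coincide in general. In particular, $\chi_{q,t}(L(m))$ may contain dominant monomials other than $m$ (just as $\chi_q(L(m))$ may when $L(m)$ is not a special module), so the ``uniqueness above'' does not identify the KL output with your $F_t(m)$. Moreover the coefficients $s_m(m')$ of $F_t(m)$ lie in $\Z[t^{\pm 1}]$, not $t^{-1}\Z[t^{-1}]$, so they cannot arise from the KL normalization. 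The correct way to finish your argument is instead a parity statement about $\gamma$: the difference $\gamma(m_1',m_2')-\gamma(Y_{i,p},Y_{j,s})$ is even whenever $m_1'$ and $m_2'$ appear in $F_t(Y_{i,p})$ and $F_t(Y_{j,s})$, because $m_\ell'$ differs from the top monomial by a product of $A_{k,r}^{-1}$'s and $\gamma(A_{k,r},\,\cdot\,)$ is always even by \eqref{eq:gamma}; this shows $E_t(m)$ already has $\Z[t^{\pm 1}]$-coefficients, and the triangular correction then preserves that property by induction, with no bar-involution needed.
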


The element $F_t(m)$ is constructed through an algorithm considered in \cite[\S5.7]{Her} (a $t$-analog of the Frenkel--Mukhin algorithm \cite{FM}), whose detail is recalled later in \S\ref{Ssec:tFM}. 

For a dominant monomial $m \in \cM_+$, we choose an ordered factorization $m = Y_{i_1,p_1} Y_{i_2,p_2} \cdots Y_{i_l,p_l}$ satisfying $p_1 \ge p_2 \ge \cdots \ge p_l$.
Letting 
\[\gamma(m) \seq -\frac{1}{2}\sum_{1 \le a < b \le l}\gamma(Y_{i_a,p_a}, Y_{i_b, p_b}),\] 
we set 
\begin{equation}\label{eq:def-E_t}
E_t(m) \seq t^{\gamma(m)} F_t(Y_{i_1,p_1}) * F_t(Y_{i_2,p_2})*\cdots * F_t(Y_{i_l,p_l}). 
\end{equation}
Since we have $F_t(Y_{i,p}) * F_t(Y_{j,p}) = F_t(Y_{i,p}Y_{j,p}) = F_t(Y_{j,p}) * F_t(Y_{i,p})$ for any $i,j \in \II$ and $p \in \Z$, the element $E_t(m)$ is independent of the choice of ordered factorization of $m$.
By construction, $E_t(m) - m$ is a $\Z[t^{\pm 1}]$-linear combination of monomials $m' \in \cM$ satisfying $m' < m$. 
The set $\{ E_t(m) \mid m \in \cM^+\}$ forms a free $\Z[t^{\pm 1/2}]$-basis of $K_t(\Cc_\Z)$, called the standard basis.
Applying the Kazhdan--Lusztig type algorithm, we can construct the canonical basis from the standard basis as follows.

\begin{prop}[{\cite{Nakajima04, Her}}]\label{Prop:chiqt}
For each dominant monomial $m \in \cM_+$, there exists a unique element $\chi_{q,t}(L(m))$ of $K_t(\Cc_\Z)$ satisfying 
\[ \overline{\chi_{q,t}(L(m))} = \chi_{q,t}(L(m)) \quad \text{and} \quad \chi_{q,t}(L(m)) - E_t(m) \in \sum_{m' < m } t^{-1}\Z[t^{-1}]E_t(m').\]
\end{prop}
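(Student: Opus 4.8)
This is a classical Kazhdan--Lusztig type existence-and-uniqueness statement, and the proof follows the standard template going back to \cite{KL79}, adapted to the quantum Grothendieck ring setting as in \cite{Nakajima04, Her}.

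\smallskip

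\textbf{Uniqueness.} The plan is to argue first that two elements $C, C'$ satisfying both conditions must coincide. Their difference $D \seq C - C'$ lies in $K_t(\Cc_\Z)$, is bar-invariant, and lies in $\sum_{m' < m} t^{-1}\Z[t^{-1}] E_t(m')$. Since $\{E_t(m')\}$ is a $\Z[t^{\pm 1/2}]$-basis, write $D = \sum_{m' < m} d_{m'}(t) E_t(m')$ with $d_{m'}(t) \in t^{-1}\Z[t^{-1}]$. Applying the bar-involution and using that each $E_t(m')$ is \emph{not} in general bar-invariant forces an inductive argument: order the finitely many $m' < m$ that can occur (the set of such $m'$ is finite because $\chi_q(L(m))$ is a finite sum and, by Theorem~\ref{thm:FM}(2) together with the structure of $K_t(\Cc_\Z)$, the monomials appearing are controlled) and show by downward induction on the Nakajima ordering that each $d_{m'}(t)$ must vanish. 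Concretely, $\overline{E_t(m')} = E_t(m') + (\text{lower terms})$ because $E_t(m')$ is a bar-invariant-up-to-lower-order element (one needs here that $E_t(m') - m'$ involves only $m'' < m'$ and a short computation of how bar interacts with the standard basis); bar-invariance of $D$ then yields $d_{m'}(t) = \overline{d_{m'}(t)}$ modulo contributions from strictly larger indices, and an element of $t^{-1}\Z[t^{-1}]$ fixed by $t \mapsto t^{-1}$ is zero. This is the routine part.

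\smallskip

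\textbf{Existence.} I would construct $\chi_{q,t}(L(m))$ by the usual greedy/inductive procedure. Start from $E_t(m)$, which is bar-invariant up to lower-order corrections: write $\overline{E_t(m)} = E_t(m) + \sum_{m' < m} \beta_{m'}(t) E_t(m')$ with $\beta_{m'}(t) \in \Z[t^{\pm 1}]$ and $\overline{\beta_{m'}(t)} = -\beta_{m'}(t) + (\text{higher})$; then correct term by term, descending in the Nakajima ordering, by subtracting suitable multiples $p_{m'}(t) E_t(m')$ with $p_{m'}(t) \in t^{-1}\Z[t^{-1}]$ chosen so that $p_{m'}(t) - \overline{p_{m'}(t)}$ cancels the obstruction $\beta_{m'}(t)$ — this is solvable and unique precisely because $\Z[t^{\pm 1}] = t^{-1}\Z[t^{-1}] \oplus \Z[t] $ splits the antidiagonal for the involution $t \mapsto t^{-1}$ (after fixing the bar-invariant constant term appropriately). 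Since the ambient poset of relevant monomials is finite, the process terminates and produces an element of $K_t(\Cc_\Z)$ (the quantum Grothendieck ring is bar-stable, and each $E_t(m')$ lies in it, so the corrected element stays in $K_t(\Cc_\Z)$) with the two required properties. I would then note, following \cite{Her}, that the Frenkel--Mukhin-type algorithm recalled in \S\ref{Ssec:tFM} gives an explicit realization of this inductive correction, so one can alternatively appeal directly to that algorithm's output.

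\smallskip

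\textbf{Main obstacle.} The genuinely substantive point, as opposed to the formal KL machinery, is establishing the right ``bar-triangularity'' of the standard basis: that $\overline{E_t(m)}$ lies in $E_t(m) + \sum_{m' < m}\Z[t^{\pm 1}]E_t(m')$, with the finiteness of the index set and the compatibility of the bar-involution with the quantum torus multiplication $*$ and with the $F_t(Y_{i,p})$. This rests on the definition \eqref{eq:def-E_t} of $E_t(m)$ as an ordered $*$-product of fundamental $F_t(Y_{i_a,p_a})$ with the normalizing power $t^{\gamma(m)}$, the formulas \eqref{eq:gamma} for $\gamma$, and the bar-invariance of each $F_t(m)$ from Theorem~\ref{Thm:F_t} (note $F_t$ of a \emph{fundamental} monomial is a single $F_t$-basis element, hence bar-invariant). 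Granting this — which is exactly what \cite[\S5]{Her} sets up — the proposition is a formal consequence of the Kazhdan--Lusztig lemma. I would therefore present the proof as: recall the bar-triangularity of $\{E_t(m)\}$ from Theorem~\ref{Thm:F_t} and \eqref{eq:gamma}, then invoke the standard KL existence/uniqueness argument, citing \cite{Nakajima04, Her} for the original appearance in this context.
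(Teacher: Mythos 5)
The paper does not prove this proposition; it is quoted from \cite{Nakajima04, Her} with no argument supplied, so there is no in-paper proof to compare against, and your reconstruction must be assessed on its own terms. The outline is correct: uniqueness via the splitting $\Z[t^{\pm 1}] = t^{-1}\Z[t^{-1}] \oplus \Z[t]$ applied to a maximal surviving index, existence by the greedy downward correction over a finite index poset, and the substantive input being the bar-triangularity of the standard basis $\{E_t(m)\}$, which you correctly trace back to the $*$-product definition \eqref{eq:def-E_t}, the pairing formulas \eqref{eq:gamma}, and bar-invariance of the $F_t(Y_{i,p})$. This matches what \cite{Nakajima04} and \cite[\S5--6]{Her} actually do.

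Two points need fixing. First, your parenthetical justification for bar-invariance of $F_t(Y_{i,p})$ --- ``a single $F_t$-basis element, hence bar-invariant'' --- is not a valid inference: being a basis element does not make something bar-invariant. The correct reason is that $K_t(\Cc_\Z)$ is bar-stable (stated at the end of \S\ref{Ssec:QGR}) and that $\overline{F_t(m)}$ is again of the form prescribed in Theorem~\ref{Thm:F_t} with coefficients in $\Z[t^{\pm 1}]$, so the uniqueness there forces $\overline{F_t(m)} = F_t(m)$. Second, and more seriously, your closing claim that one could ``alternatively appeal directly to'' the Frenkel--Mukhin-type algorithm of \S\ref{Ssec:tFM} as a realization of the Kazhdan--Lusztig correction is wrong: that algorithm produces $F_t(m)$, which by Theorem~\ref{Thm:F_t} contains $m$ as its \emph{only} dominant monomial, whereas the canonical basis elements $\chi_{q,t}(L(m))$ form a different $\Z[t^{\pm 1/2}]$-basis of $K_t(\Cc_\Z)$ and in general contain several dominant monomials. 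The two bases coincide only for special $m$, such as the fundamental $Y_{i,p}$ (Remark~\ref{rem:chiqt-fund}); so the KL step cannot be bypassed by the $t$-analog of the Frenkel--Mukhin algorithm.
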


The unique element $\chi_{q,t}(L(m))$ is called the $t$-analog of $q$-character, or $(q,t)$-character, of the simple representation $L(m)$.
We refer to the free $\Z[t^{\pm 1/2}]$-basis $\{ \chi_{q,t}(L(m)) \mid m \in \cM_+\}$ formed by these simple $(q,t)$-characters as the canonical basis of the quantum Grothendieck ring $K_t(\Cc_\Z)$. 

\begin{rem}\label{rem:chiqt-fund}
We have
\[
\chi_{q,t}(L(Y_{i,p})) = F_t(Y_{i,p}) = E_t(Y_{i,p})
\]
for any $(i,p) \in \II \times \Z$, as the variable $Y_{i,a}$ is minimal in $\cM_+$ with respect to the Nakajima ordering. 
\end{rem}

Using the geometry of quiver variety, Nakajima proved the following fundamental result.

\begin{thm}[{\cite[Theorem 8.1 (2)]{Nakajima04}}] \label{thm:Nak}
When $\fg$ is of type $\mathrm{ADE}$, for any $m \in \cM_+$, we have
\begin{equation} \label{eq:KL}
\ev_{t=1}\chi_{q,t}(L(m)) = \chi_{q}(L(m)).
\end{equation}
\end{thm}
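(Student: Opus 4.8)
The equality \eqref{eq:KL} is Nakajima's theorem, and the only route I know goes through the geometry of quiver varieties, so the plan is to recall that construction and read the identity off from it. \textbf{Step 1 (geometric model).} For $\fg$ of type $\mathrm{ADE}$, attach to a dominant monomial $m$ a dimension vector $\mathbf{w}$ and consider Nakajima's graded quiver varieties $\Mst(\mathbf{w})$, $\Mst_0(\mathbf{w})$, the projective resolution $\pi\colon \Mst(\mathbf{w})\to \Mst_0(\mathbf{w})$, and the stratification $\Mst_0(\mathbf{w})=\bigsqcup_{m'}\Mst_0^{\reg}(\mathbf{w}')$ by strata on which the $\ell$-weight is $m'$. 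Recall from \cite{Nak01} that $\qlp$ acts on the sum of the equivariant $K$-groups (resp.\ Borel–Moore homologies) of the relevant Lagrangian subvarieties so that the standard module $M(m)$ (the ordered tensor product of the fundamental modules dividing $m$) is realized on a generic fibre of $\pi$, the simple module $L(m)$ as the $\IC$-summand supported on $\overline{\Mst_0^{\reg}(\mathbf{w})}$, and the Jordan–Hölder multiplicity $[M(m):L(m')]$ equals the total stalk dimension $\sum_k\dim \HO^k\bigl(i_{x_{m'}}^{*}\IC(\overline{\Mst_0^{\reg}(\mathbf{w})})\bigr)$ at a point $x_{m'}$ of the stratum labelled by $m'$.

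\textbf{Step 2 (identify the two $(q,t)$-characters).} Show that the element $\chi_{q,t}(L(m))$ produced by the Kazhdan–Lusztig algorithm of Proposition \ref{Prop:chiqt} coincides with the \emph{geometric} $(q,t)$-character, whose coefficient of $m'$ is the graded Poincaré polynomial of that same $\IC$ stalk (normalized by the shift $\gamma(m)$ so that $m$ is the leading term). Concretely: this geometric element lies in $K_t(\Cc_\Z)$ by the geometric construction of the subalgebras $K_{i,t}$; it is bar-invariant because $\IC$ is Verdier self-dual; and it is congruent to $E_t(m)$ modulo $\sum_{m'<m}t^{-1}\Z[t^{-1}]E_t(m')$ because the stalks of a shifted $\IC$ sheaf sit only in the strictly positive cohomological degrees allowed by the support/cosupport conditions of a perverse sheaf. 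By the uniqueness clause of Proposition \ref{Prop:chiqt} the two elements agree. \emph{This is the substantial step}, and it rests on the decomposition theorem and on the purity (parity vanishing) of $\IC$ stalks on graded quiver varieties established in \cite{Nakajima04}.

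\textbf{Step 3 (specialize $t=1$ and conclude).} Write $E_t(m)=\sum_{m'}\tilde a_{mm'}(t)\,\chi_{q,t}(L(m'))$ with $\tilde a_{mm}=1$; by Step 2 the coefficient $\tilde a_{mm'}(t)$ is the graded $\IC$ stalk dimension of Step 1, hence $\ev_{t=1}\tilde a_{mm'}(t)$ is the total stalk dimension, which by Step 1 equals $[M(m):L(m')]$. Inverting and applying $\ev_{t=1}$ to $\chi_{q,t}(L(m))=\sum_{m'}b_{mm'}(t)E_t(m')$, and using $\ev_{t=1}E_t(m')=\chi_q(M(m'))$ (Remark \ref{rem:chiqt-fund}, multiplicativity of $\chi_q$, and \eqref{eq:def-E_t}), we get $\ev_{t=1}\chi_{q,t}(L(m))=\sum_{m'}\ev_{t=1}\!\bigl(b_{mm'}(t)\bigr)\chi_q(M(m'))$. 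On the other hand $[L(m)]=\sum_{m'}b^{\cl}_{mm'}[M(m')]$ in $K(\Cc_\Z)$, where $(b^{\cl})$ is the inverse of the multiplicity matrix $\bigl([M(m):L(m')]\bigr)$; applying the ring homomorphism $\chi_q$ gives $\chi_q(L(m))=\sum_{m'}b^{\cl}_{mm'}\chi_q(M(m'))$. Since $\ev_{t=1}\tilde a_{mm'}(t)=[M(m):L(m')]$ forces $\ev_{t=1}b_{mm'}(t)=b^{\cl}_{mm'}$, the two expressions coincide and \eqref{eq:KL} follows. The main obstacle is Step 2: it requires the full quiver-variety model of $\Cc$ in type $\mathrm{ADE}$ together with the purity of $\IC$ stalks, without which there is no handle on the cohomological degrees needed to pin the geometric $(q,t)$-character to the Kazhdan–Lusztig normalization — and the fact that this geometry exists only in type $\mathrm{ADE}$ is precisely why the present paper develops the freezing bootstrap for the remaining classical types.
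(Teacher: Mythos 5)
The paper does not prove Theorem \ref{thm:Nak}; it is quoted verbatim from \cite[Theorem 8.1(2)]{Nakajima04}, so there is no internal proof to compare against. Your proposal is a reasonable pr\'ecis of Nakajima's geometric argument, which is indeed the only known route in type $\mathrm{E}$: the quiver-variety realization of standard and simple modules, the identification of the algebraically defined $(q,t)$-character with the $\IC$-sheaf normalization via bar-invariance (Verdier self-duality) plus the support/purity conditions, and the specialization argument matching the $t=1$ limits of the two inverse transition matrices. You correctly flag Step 2 as the substantive point, and your Step 3 bookkeeping (that $\ev_{t=1}$ commutes with matrix inversion, hence $\ev_{t=1}b_{mm'}=b^{\cl}_{mm'}$) is sound.

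Two minor cautions. First, in Step 1 the indexing of which $\IC$ sheaf is restricted to which stratum should be double-checked against \cite[Theorem 8.1]{Nakajima04}: for $m'<m$ the stratum labelled $m'$ has \emph{smaller} dimension, and the relevant stalk is that of $\IC$ on the closure of the $m'$-stratum evaluated on the $m$-stratum, not the reverse. Second, the input $\ev_{t=1}E_t(m')=\chi_q(M(m'))$ requires $\ev_{t=1}F_t(Y_{i,p})=\chi_q(L(Y_{i,p}))$, which does not follow from Remark~\ref{rem:chiqt-fund} alone; it follows from the uniqueness clause of Theorem~\ref{Thm:F_t} specialized at $t=1$ combined with the Frenkel--Mukhin theorem (fundamental $q$-characters have a unique dominant monomial). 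Finally, it is worth stating explicitly that the present paper \emph{does} supply an alternative proof of \eqref{eq:KL} for types $\mathrm{A}$ and $\mathrm{D}$ (but not $\mathrm{E}$) as a byproduct of Theorem~\ref{thm:H-conj-C}, by bootstrapping the core-subcategory result Theorem~\ref{thm:H-conj-region-Q} via the freezing operator, with no quiver varieties involved --- this is exactly the observation you make in your closing sentence.
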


\subsection{Hernandez's conjecture}\label{Ssec:Hconj}

Having Theorem \ref{thm:Nak} above for $\fg$ of type $\mathrm{ADE}$, it is natural to expect that the same statement holds for general $\fg$, as conjectured in \cite{Her}. 
 
\begin{conj}[{\cite[Conjecture 7.3]{Her}}]  \label{conj:Her}
For a general simple Lie algebra $\fg$, the equality \eqref{eq:KL} holds for any $m \in \cM_+$.
\end{conj}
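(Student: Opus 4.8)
The plan is to prove the conjecture for $\fg$ of classical type ($\mathrm{ABCD}$) by a bootstrapping argument that reduces the equality \eqref{eq:KL} for an arbitrary simple module to the case of simple modules lying in a \emph{core} subcategory, where it is already known by \cite{FHOO1}. The bridge between the two cases is an operator $\frz$ on $q$- and $(q,t)$-characters, the \emph{freezing operator}, introduced in the cluster-algebraic setting in \cite{qin2023analogs}.

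First I would set up $\frz$ as follows. Fix a larger classical simple Lie algebra $\tfg$ whose Dynkin diagram contains that of $\fg$ as a proper subdiagram, identifying $\II$ with a subset of the node set of $\tfg$. On the ambient quantum tori this produces a map $\frz$ from the quantum torus attached to $\tfg$ into $(\cY_t,*)$ that keeps each variable $Y_{j,p}$ with $j\in\II$, specializes the ``frozen'' variables $Y_{j,p}$ with $j\notin\II$ to $1$ (with the spectral-parameter bookkeeping prescribed in \cite{qin2023analogs}), and discards any Laurent monomial that thereby acquires a variable it should not carry; the $t=1$ version mapping into $\cY$ is defined by the same recipe. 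Two properties must then be established. First, since the recipe does not involve $t$, one gets for free the commutativity $\ev_{t=1}\circ\frz = \frz\circ\ev_{t=1}$ together with compatibility of $\frz$ with the bar-involution. Second --- the representation-theoretic content --- for a simple module $\tilde L = L(\tilde m)$ in a core subcategory of $\Cc_{\tfg}$, chosen so that its restriction to $\qlp$ has a prescribed simple $L=L(m)$ as its leading ($\ell$-highest weight) constituent, one has $\frz(\chi_q(\tilde L)) = \chi_q(L)$ and $\frz(\chi_{q,t}(\tilde L)) = \chi_{q,t}(L)$.

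The hard part will be this second property. On the $q$-character side one must identify the leading constituent of the restriction and show that $\frz$ extracts exactly it: by Theorem \ref{thm:FM}(2) every monomial of $\chi_q(\tilde L)$ other than $\tilde m$ is $<\tilde m$ in the Nakajima ordering, and one checks that $\frz$ annihilates precisely the monomials involving a frozen variable while the survivors reassemble into $\chi_q(L)$, so that in particular $\frz$ is well defined and does not increase complexity in the Nakajima ordering. On the $(q,t)$-side, granting the $q$-character statement together with the compatibility of $\frz$ with the standard basis $E_t$ and with the bar-involution, the element $\frz(\chi_{q,t}(\tilde L))$ is bar-invariant and differs from $E_t(m)$ by an element of $\sum_{m'<m} t^{-1}\Z[t^{-1}]E_t(m')$, so by the uniqueness in Proposition \ref{Prop:chiqt} it must coincide with $\chi_{q,t}(L)$. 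One also needs that $\frz$ carries $K_t(\Cc_\Z)$ for $\tfg$ into $K_t(\Cc_\Z)$ for $\fg$, which can be checked against the defining intersection $\bigcap_{i}K_{i,t}$.

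Granting these two properties, the proof concludes at once. For any simple $L=L(m)$ with $\fg$ of type $\mathrm{ABCD}$, choose $\tfg$ and $\tilde L$ as above --- possible in all classical types (cf.\ Remark \ref{rem:core-subcat}). By \cite{FHOO1} the equality $\ev_{t=1}\chi_{q,t}(\tilde L) = \chi_q(\tilde L)$ holds for $\tilde L$ in the core subcategory. Applying $\frz$ and combining the two displayed compatibilities yields the commutative diagram
\[
\xymatrix{ \chi_{q,t}(\tilde{L}) \ar@{|->}[r]^-{\frz} \ar@{|->}[d]_-{\ev_{t=1}} & \chi_{q,t}(L) \ar@{|->}^-{\ev_{t=1}}[d]  \\
\chi_{q}(\tilde{L}) \ar@{|->}[r]^-{\frz} & \chi_q(L),}
\]
and hence $\ev_{t=1}\chi_{q,t}(L) = \chi_q(L)$, which is \eqref{eq:KL}. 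The method stops at classical type precisely because the argument needs an arbitrarily large $\tfg$ containing $\fg$ as a proper subdiagram --- impossible when $\fg$ is of exceptional type.
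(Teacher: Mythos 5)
Your proposal is essentially the paper's own approach, correct in outline and logical structure: introduce the freezing operator $\frz^{\tfg}_{\fg}$, establish the commutativity $\ev_{t=1}\circ\frz = \frz\circ\ev_{t=1}$ and compatibility with the bar-involution, prove $\frz(\chi_q(\tL))=\chi_q(L)$ (via the $\ell$-highest weight submodule of the restriction, as in Proposition~\ref{prop:frz-chiq}) and $\frz(\chi_{q,t}(\tL))=\chi_{q,t}(L)$ (via Proposition~\ref{Prop:chiqt}), then bootstrap from Theorem~\ref{thm:H-conj-region-Q}.

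Two points where your sketch is imprecise or leaves a genuine technical step unaddressed. First, your description of $\frz$ (``specializes frozen variables to $1$ and discards monomials that thereby acquire a variable they should not carry'') is not quite the definition. The operator acts on an $m$-pointed element by first discarding all terms $c_{m'}m'$ with $m' \not\le_\II m$ --- where $\le_\II$ is the relative Nakajima ordering using only $\tilde A_{i,p}$ with $i\in\II$ --- and then applying $\res_\II$; since $\tilde A_{i,p}$ with $i\in\II$ adjacent to some $j\notin\II$ still contains $Y_{j,\cdot}^{\pm1}$, monomials with frozen $Y$-variables do survive the discarding and only get killed by $\res_\II$. Keeping the ordering-based discarding and the specialization separate is essential for the arguments that follow (in particular for Lemma~\ref{Lem:NakordI}). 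Relatedly, the remark that ``$\frz$ carries $K_t(\tilde\Cc_\Z)$ into $K_t(\Cc_\Z)$, which can be checked against $\bigcap_i K_{i,t}$'' is not how the paper proceeds and would be delicate, since $\frz$ is only defined on pointed elements and is not additive; the paper instead establishes the needed membership by proving the exact identity $\frz(\tilde F_t(m)) = F_t(\res_\II(m))$ on the algorithmic basis.

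Second, you ``grant'' the compatibility of $\frz$ with the standard basis $E_t$, but this is the most technical ingredient and should not be waved through. The paper derives it (Proposition~\ref{Prop:frz-E_t}) from the fundamental case of Proposition~\ref{Prop:frz-F_t}, which in turn requires a full induction on the $t$-analog of the Frenkel--Mukhin algorithm (\S\ref{Ssec:tFM}), using Lemma~\ref{Lem:NakordI}(1) to see that $[\tilde F_{i,t}(\tilde m_u)]_{\tilde m_{\nu(k+1)}}=0$ whenever $\tilde m_u \not\le_\II m$. Note in particular that for type $\mathrm{D}$ the fundamental $(q,t)$-characters are genuine $t$-deformations of the $q$-characters, so the shortcut available for types $\mathrm{ABC}$ (where $\chi_{q,t}(L(Y_{i,p}))=\chi_q(L(Y_{i,p}))$) is not available and the algorithmic argument is genuinely needed.
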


As far as the authors know, this conjecture is still open in full generality at the moment. 
Recently, we had some progress toward the conjecture, one of which is the following result.  

\begin{thm}[{\cite[Corollary 11.4 (2)]{FHOO1}}] \label{thm:H-conj-B}
The equality \eqref{eq:KL} holds for any $m \in \cM_+$ when $\fg$ is of type $\mathrm{B}$.
\end{thm}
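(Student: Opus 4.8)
The plan is to reduce the type $\mathrm{B}$ case to the already-established simply-laced case (Theorem~\ref{thm:Nak}) by transporting everything to type $\mathrm{A}_{2n-1}$, whose untwisted affine algebra is Langlands dual to that of $\mathrm{B}_n$. The first ingredient, at the level of ordinary Grothendieck rings, is an injective $\Z$-algebra homomorphism $\iota\colon K(\Cc_\Z^{\mathrm{B}_n}) \hookrightarrow K(\Cc_\Z^{\mathrm{A}_{2n-1}})$ onto a distinguished subring, together with an explicit combinatorial rule $m \mapsto \widehat m$ from dominant monomials of type $\mathrm{B}_n$ to those of type $\mathrm{A}_{2n-1}$, such that $\iota([L(m)]) = [L(\widehat m)]$ for all $m$. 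Such a comparison is obtained by composing the folding homomorphism of \cite{Her10} (relating $\Cc_\Z^{\mathrm{A}_{2n-1}}$ with its order-$2$ twisted analogue) with the Langlands-duality identification of that twisted analogue with $\Cc_\Z^{\mathrm{B}_n}$, the latter being made precise through the generalized quantum affine Schur--Weyl duality functors of Kang--Kashiwara--Kim--Oh \cite{KKKOsw3,KKKOsw4} (on the core subcategories). Via the injectivity of $\chi_q$ (Proposition~\ref{prop:FM_CZ}), $\iota$ then carries $\chi_q(L(m))$ to $\chi_q(L(\widehat m))$.

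Next I would lift $\iota$ to the quantum level. Since each standard basis element $E_t(m)$ is, by \eqref{eq:def-E_t}, a $*$-monomial in the fundamental $(q,t)$-characters $F_t(Y_{i,p}) = \chi_{q,t}(L(Y_{i,p}))$ (Remark~\ref{rem:chiqt-fund}), the ring $K_t(\Cc_\Z^{\mathrm{B}_n})$ is generated over $\Z[t^{\pm 1/2}]$ by these $F_t(Y_{i,p})$; so one attempts to define a $\Z[t^{\pm 1/2}]$-algebra homomorphism $\iota_t\colon K_t(\Cc_\Z^{\mathrm{B}_n}) \to K_t(\Cc_\Z^{\mathrm{A}_{2n-1}})$ by $F_t(Y_{i,p}) \mapsto \chi_{q,t}(L(\widehat{Y_{i,p}}))$. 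What must be checked is: (a) this assignment respects all relations, so $\iota_t$ is well defined, and it intertwines the bar-involutions of the two quantum tori; (b) $\iota_t$ specializes at $t=1$ to $\iota$. Granting (a) and (b): comparing \eqref{eq:def-E_t} on the two sides shows $\iota_t$ sends $E_t(m)$ to $E_t(\widehat m)$ up to an explicit power of $t$, hence carries the standard basis of $K_t(\Cc_\Z^{\mathrm{B}_n})$ into the standard basis of its image; being bar-equivariant, $\iota_t$ must then carry the Kazhdan--Lusztig-characterized canonical basis to the canonical basis, that is $\iota_t(\chi_{q,t}(L(m))) = \chi_{q,t}(L(\widehat m))$ for all $m \in \cM_+$, by the uniqueness in Proposition~\ref{Prop:chiqt}.

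With all this in place, the theorem follows from a diagram chase: for $m \in \cM_+$,
\[
\iota\bigl(\ev_{t=1}\chi_{q,t}(L(m))\bigr) = \ev_{t=1}\bigl(\iota_t\,\chi_{q,t}(L(m))\bigr) = \ev_{t=1}\chi_{q,t}(L(\widehat m)) = \chi_q(L(\widehat m)) = \iota\bigl(\chi_q(L(m))\bigr),
\]
where the first equality is (b), the second is the conclusion of the previous paragraph, the third is Theorem~\ref{thm:Nak} in type $\mathrm{A}_{2n-1}$, and the fourth is the $t=1$ content of $\iota$; since $\iota$ is injective, this yields $\ev_{t=1}\chi_{q,t}(L(m)) = \chi_q(L(m))$, which is \eqref{eq:KL}. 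The entire difficulty sits in point (a): one must show that the folding correspondence $\mathrm{B}_n \leftrightarrow \mathrm{A}_{2n-1}$ is compatible with the $t$-deformations — that it intertwines the skew pairings $\gamma$ (hence the twisted products $*$) and the bar-involutions — which reduces to a combinatorial identity relating the inverse deformed Cartan matrices $C'(z)$, equivalently the coefficients $c'_{ij}(u)$, of $\mathrm{B}_n$ and $\mathrm{A}_{2n-1}$. Once this identity is secured, the well-definedness of $\iota_t$ and all of the above become formal.
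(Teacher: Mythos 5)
Your approach is in the spirit of the reference the paper cites for this result (\cite{FHOO1}) --- transporting the problem to type $\mathrm{A}_{2n-1}$ via Langlands duality and quantum Grothendieck ring comparisons --- rather than the freezing-operator argument by which the present paper re-derives Theorem~\ref{thm:H-conj-B} as a special case of Theorem~\ref{thm:H-conj-C}. The terminal diagram chase is sound \emph{provided} your claims (1) and (2) hold for every $m \in \cM_+$, but that is exactly where the argument breaks down. The folding step of \cite{Her10} does carry simples to simples on all of $\Cc_\Z$ in type $\mathrm{A}$ by \cite{KKKOsw3}; however, the Langlands-duality identification you then invoke --- matching the twisted $\mathrm{A}_{2n-1}^{(2)}$ Grothendieck ring with the untwisted $\mathrm{B}_n$ one, with simples going to simples --- is supplied by the generalized Schur--Weyl duality functors of \cite{KKKOsw4} and \cite{KO19} \emph{only on the core subcategories}, as you yourself note parenthetically. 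You give no mechanism for extending claim (1) beyond the core.

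This is not a technicality but the crux. Given your claim (2) together with Nakajima's Theorem~\ref{thm:Nak}, the equality $\iota([L(m)]) = [L(\widehat{m})]$ is logically \emph{equivalent} to the equality \eqref{eq:KL} in type $\mathrm{B}$ (run your diagram chase in reverse), so assuming it for general $m$ begs the question. Bridging exactly this gap --- from cores to the full category --- is what the present paper's freezing operator $\frz^{\tfg}_{\fg}$ is designed for: an arbitrary $m$ is lifted to a dominant monomial lying in a core subcategory of a larger algebra of the same type, and the conclusion is pulled back; \cite{FHOO1} likewise supplies additional arguments beyond the core-subcategory analysis before its Corollary~11.4 follows. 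Your proposal would need some comparable device before claim (1) becomes available outside the core. A secondary issue: for the uniqueness in Proposition~\ref{Prop:chiqt} to apply in your step on $\iota_t$, you also need $\iota_t$ to intertwine the two Nakajima orderings, i.e.\ $m' < m$ must force $\widehat{m'} < \widehat{m}$; this is unchecked for a rule $m \mapsto \widehat{m}$ that you never actually specify.
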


Another partial result is the following, which will be a key ingredient in this paper.
Let $a,b$ be two integers satisfying $a \le b$, and consider the finite integer interval $[a,b] \seq \{k \in \Z \mid a \le k \le b\}$.
Define $\cM[a,b]_+$ to be the subset of $\cM_+$ consisting of the monomials in the variables $Y_{i,p}$ with $(i, p) \in \II \times [a,b]$.
Recall that $r^\vee$ and $h^\vee$ denote the lacing number of $\fg$ and the dual Coxeter number of $\fg$ respectively. 

\begin{thm}[{\cite[Corollary 9.8 (2)]{FHOO1}}]\label{thm:H-conj-region-Q}
The equality \eqref{eq:KL} holds if $m \in \cM[a,b]_+$ for some finite integer interval $[a,b] \subset \Z$ satisfying $b-a < r^\vee h^\vee$. 
\end{thm}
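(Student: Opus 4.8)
The plan is to normalize the interval by a spectral shift, and then to propagate the equality \eqref{eq:KL} from a core subcategory -- where it is already available -- using the comparison of the quantum Grothendieck ring $K_t(\Cc_\Z)$ with a quantum unipotent coordinate ring. First, for each $k\in\Z$ the assignment $Y_{i,p}\mapsto Y_{i,p+k}$ extends to a $\Z[t^{\pm 1/2}]$-algebra automorphism $\tau_k$ of the quantum torus $(\cY_t,*)$: since $\gamma(Y_{i,p},Y_{j,s})$ depends only on $p-s$ it preserves the product $*$, it commutes with the bar-involution and with $\ev_{t=1}$, and it permutes the generating sets \eqref{eq:Ki}, so it restricts to an automorphism of $K_t(\Cc_\Z)$. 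Being induced by the corresponding spectral-shift auto-equivalence of $\Cc_\Z$, it sends $\chi_{q,t}(L(m))\mapsto\chi_{q,t}(L(\tau_km))$, $E_t(m)\mapsto E_t(\tau_km)$, and $\chi_q(L(m))\mapsto\chi_q(L(\tau_km))$. Since moreover $\cM[a,b]_+\subseteq\cM[a',b']_+$ whenever $[a,b]\subseteq[a',b']$, it suffices to treat $m\in\cM[I]_+$ for one fixed interval $I=[0,r^\vee h^\vee-1]$; write $\Cc_I\subseteq\Cc_\Z$ for the monoidal subcategory generated by the $L(Y_{i,p})$ with $(i,p)\in\II\times I$, which is a (maximal) core subcategory in the sense of the introduction.

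The essential input -- and this is where the hypothesis $b-a<r^\vee h^\vee$ is used -- is an identification of $K_t(\Cc_I)$ with an integral form of a quantum unipotent coordinate ring $A_q(\mathfrak n)$ of a suitable finite type: for $\fg$ of classical type this is the simply-laced type governing the relevant generalized quantum affine Schur--Weyl duality functor of Kang--Kashiwara--Kim--Oh \cite{KKKOsw3,KKKOsw4}, together with the Hernandez--Leclerc description of the classical Grothendieck ring \cite{HL10}. This identification is a $\Z[t^{\pm 1/2}]$-algebra isomorphism intertwining the two bar-involutions, carrying the standard basis $\{E_t(m)\}$ to the basis of standard monomials and the canonical basis $\{\chi_{q,t}(L(m))\}$ to the upper global basis, and intertwining $\ev_{t=1}$ with the specialization $q\to 1$ on $A_q(\mathfrak n)$, under which $\chi_q(L(m))=[L(m)]$ corresponds to the dual canonical basis element of $\C[N]$. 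Granting this, \eqref{eq:KL} for $m\in\cM[I]_+$ becomes the statement that the upper global basis of $A_q(\mathfrak n)$ specializes at $q=1$ to the dual canonical basis of $\C[N]$ -- which holds by the basic properties of (dual) canonical bases, or, since the type is simply-laced, already by Theorem \ref{thm:Nak}. Transporting this back through the isomorphism and undoing $\tau_k$ gives the theorem.

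The main obstacle is the construction and structural verification of the isomorphism in the second step: one must show that the quantum affine Schur--Weyl duality functor $R\text{-gmod}\to\Cc_I$ (with $R$ a quiver Hecke algebra of finite type ADE) is an equivalence and that the induced isomorphism of (quantum) Grothendieck rings is compatible with the grading on $R$ -- hence with $t$ and with the bar-involutions -- and sends self-dual simple modules to $(q,t)$-characters, so that \emph{both} the standard and the canonical bases are matched. It is precisely the bound $b-a<r^\vee h^\vee$ that guarantees the underlying duality datum is admissible (a \emph{strongly commuting} family), so that the functor is exact, annihilates no simple object, and the combinatorial parametrizations on the two sides agree. Once this is in place, the identity \eqref{eq:KL} on $\cM[I]_+$ is a purely quantum-group statement in a simply-laced type and hence known; this is \cite[Corollary 9.8 (2)]{FHOO1}.
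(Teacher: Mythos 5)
The paper does not prove this statement; it cites it directly from \cite[Corollary 9.8(2)]{FHOO1} and only sketches the ingredients in Remark~\ref{rem:core-subcat}: the bound $b-a < r^\vee h^\vee$ makes $\Cc_{[a,b]}$ sit inside a core subcategory $\Cc_{\mathcal{Q}}$, which is equivalent to a category of modules over a quiver Hecke algebra of simply-laced type via the generalized quantum affine Schur--Weyl duality functor, and the simple modules there are identified with Lusztig's dual canonical basis by \cite{VV}. Your proposal reproduces essentially this same strategy (spectral-shift normalization, transfer of $K_t(\Cc_I)$ to a quantum unipotent coordinate ring with matching standard and canonical bases, and reduction of \eqref{eq:KL} to a simply-laced canonical-basis specialization), so it is consistent with the paper's intent.

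Two small inaccuracies worth flagging. First, the statement holds for \emph{all} finite-dimensional simple $\fg$, not just classical type; restricting to classical type is a premise you do not need and which is not in the statement. Second, your citation of \cite{KKKOsw3,KKKOsw4} is misplaced for the \emph{untwisted} duality functor relevant here---those references treat the twisted case; the relevant ones are \cite{KKKsw2,KO19,OS,Fgeom,Naoi}, as the paper indicates. Finally, your closing invocation of Theorem~\ref{thm:Nak} as an alternative is somewhat circular: the point of passing through quiver Hecke algebras is precisely to obtain the simply-laced canonical-basis specialization from \cite{VV}, and the bound on $b-a$ serves to identify $\Cc_{[a,b]}$ with the Hernandez--Leclerc core category $\Cc_{\mathcal{Q}}$, not primarily to certify admissibility of the duality datum (though the two are of course related).
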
 

\begin{rem}\label{rem:core-subcat}
The Serre subcategory $\Cc_{[a,b]}$ of $\Cc_\Z$ generated by the simple representations $L(m)$ with $ m \in \cM[a,b]_+$ is closed under tensor product, and hence a monoidal subcategory of $\Cc_\Z$. 
The category $\Cc_{[a,a+r^\vee h^\vee-1]}$ is a special case of the subcategory $\Cc_{\mathcal{Q}}$ associated with a Q-datum $\mathcal{Q}$ for $\fg$ (often referred to as a core subcategory or heart subcategory). See \cite[Lemma 4.17 and \S5]{FHOO1}.
The above Theorem \ref{thm:H-conj-region-Q} relies on a categorical equivalence between $\Cc_{\mathcal{Q}}$ and the monoidal category of finite-dimensional modules over quiver Hecke algebras through the generalized quantum affine Schur--Weyl duality functor \cite{KKKsw2, KO19,  OS, Fgeom, Naoi}, together with the identification of the isomorphism classes of simple modules over the quiver Hecke algebra with the elements of Lusztig's dual canonical bases due to \cite{VV, Rou}, which in particular requires a geometric argument.
\end{rem}

The first main result of this paper is the following.

\begin{thm} \label{thm:} \label{thm:H-conj-C}
The equality \eqref{eq:KL} holds true for any dominant monomial $m \in \cM_+$ when $\fg$ is of classical type, i.e., when $\mathrm{X} \in \{\mathrm{A, B, C, D}\}$.
In particular, Conjecture \ref{conj:Her} is newly confirmed to be true when $\fg$ is of type $\mathrm{C}$.
\end{thm}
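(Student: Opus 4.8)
The plan is to reduce the statement, for any classical $\fg$ and any dominant monomial $m \in \cM_+$, to the already-established case of a core subcategory (Theorem~\ref{thm:H-conj-region-Q}) by means of the freezing operator. The key observation is that since $\fg$ is of classical type, its Dynkin diagram embeds as a proper subdiagram of the Dynkin diagram of a classical Lie algebra $\tfg$ of arbitrarily large rank --- e.g.\ type $\mathrm{A}_n \subset \mathrm{A}_N$, type $\mathrm{C}_n \subset \mathrm{C}_N$, type $\mathrm{B}_n \subset \mathrm{B}_N$, and type $\mathrm{D}_n \subset \mathrm{D}_N$ --- in such a way that the lacing data and symmetrizers are compatible. (This compatibility of the embedding with the combinatorial data behind the freezing operator is exactly the place where exceptional types fail, as noted in the introduction.) Given $m \in \cM_+$ for $\fg$, the first step is to choose such a $\tfg$ with $N$ large enough, and a dominant monomial $\tilde m \in \cM_+$ for $\tfg$ lying in a region $\cM[a,b]_+$ with $b-a < r^\vee h^\vee(\tfg)$ --- hence in a core subcategory $\Cc_{\tilde{\mathcal Q}}$ of $\Cc_{\tfg}$ --- such that the freezing operator $\frz$ sends $\tilde m$ (and more generally $\chi_q(L(\tilde m))$, $\chi_{q,t}(L(\tilde m))$) to the corresponding $q$- and $(q,t)$-characters for $\fg$; concretely, $\tilde m$ is built from $m$ by the "unfreezing" dictionary that records how the variables $Y_{i,p}$ for $\fg$ sit inside those for $\tfg$.

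The second step is to establish the two compatibility statements for $\frz$ that make the square in the introduction commute. Namely: (i) $\frz(\chi_{q,t}(L(\tilde m))) = \chi_{q,t}(L(m))$, and (ii) $\frz(\chi_q(L(\tilde m))) = \chi_q(L(m))$, together with (iii) $\ev_{t=1}\circ \frz = \frz \circ \ev_{t=1}$. Statement (iii) is essentially formal, as the freezing operator is defined purely combinatorially on monomials and commutes with the specialization $t^{1/2}\mapsto 1$. Statement (ii) is the representation-theoretic content: $\frz$ picks out the leading constituent of the restriction of the $\tqlp$-module $L(\tilde m)$ to $\qlp$ at the level of $q$-characters, and one needs to verify that this leading constituent is precisely $L(m)$ --- this uses the Chari--Pressley classification, the Frenkel--Mukhin characterization of the image of $\chi_q$ (Theorem~\ref{thm:FM} and Proposition~\ref{prop:FM_CZ}), and the behaviour of the Nakajima ordering under $\frz$. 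Statement (i) is the analogous assertion one level up in the quantum Grothendieck ring: $\frz$ should send the canonical basis element $\chi_{q,t}(L(\tilde m))$ to $\chi_{q,t}(L(m))$, which one proves by checking that $\frz$ is compatible with the bar-involution and with the standard/PBW bases $E_t(\cdot)$, so that the defining Kazhdan--Lusztig characterization in Proposition~\ref{Prop:chiqt} is transported through $\frz$; the $t$-analog Frenkel--Mukhin algorithm recalled in \S\ref{Ssec:tFM} is the main tool here.

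With (i), (ii), (iii) in hand, the proof concludes by a diagram chase exactly as displayed in the introduction: starting from $\chi_{q,t}(L(\tilde m))$, we know $\ev_{t=1}\chi_{q,t}(L(\tilde m)) = \chi_q(L(\tilde m))$ by Theorem~\ref{thm:H-conj-region-Q} (since $\tilde m$ lies in a core subcategory), then apply $\frz$ to both sides and use (iii), (ii), (i) to obtain $\ev_{t=1}\chi_{q,t}(L(m)) = \ev_{t=1}\frz(\chi_{q,t}(L(\tilde m))) = \frz(\ev_{t=1}\chi_{q,t}(L(\tilde m))) = \frz(\chi_q(L(\tilde m))) = \chi_q(L(m))$, which is the equality \eqref{eq:KL}. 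This works uniformly for $\mathrm{X}\in\{\mathrm{A,B,C,D}\}$; type $\mathrm{C}$ is genuinely new because no quiver-variety realization is available, while types $\mathrm{A,B,D}$ are recovered without Nakajima's geometry (for $\mathrm{B}$ this reproves Theorem~\ref{thm:H-conj-B}).

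I expect the main obstacle to be step~(i): proving that the freezing operator transports the \emph{canonical} basis to the canonical basis, i.e.\ that $\frz(\chi_{q,t}(L(\tilde m)))$ again satisfies the bar-invariance and the triangularity with respect to the standard basis $\{E_t(m')\}$ that uniquely characterize $\chi_{q,t}(L(m))$. The delicate points are that $\frz$ need not be a ring homomorphism on the full quantum torus (it is only well-behaved on the relevant sub-objects), that the interaction of $\frz$ with the $*$-product involves the $\gamma$-pairing which changes when passing between $\fg$ and $\tfg$, and that one must control how the $t$-analog Frenkel--Mukhin algorithm behaves under freezing so that the factorized standard basis elements $E_t(\tilde m) = t^{\gamma(\tilde m)} F_t(Y_{i_1,p_1})*\cdots$ map correctly. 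Establishing that $\frz$ intertwines the bar-involutions and is triangular for the Nakajima ordering on both sides --- likely by a careful analysis of the freezing operator on the building-block elements $F_t(Y_{i,p})$ and an induction along the Frenkel--Mukhin algorithm --- is where the real work lies.
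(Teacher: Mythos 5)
Your plan is exactly the paper's proof: embed $\fg$ of type $\mathrm{X}_n$ into $\tfg$ of type $\mathrm{X}_{\tilde n}$ with $\tilde n$ large enough (relative to a window $[a,b]$ containing $m$) that $\tilde m$ lands inside a core subcategory, apply Theorem~\ref{thm:H-conj-region-Q} there, and transport the equality back through the freezing operator using precisely your steps (i), (ii), (iii) --- which are Theorem~\ref{thm:frz-chiqt} (built from Propositions~\ref{Prop:frz-F_t} and~\ref{Prop:frz-E_t}), Proposition~\ref{prop:frz-chiq}, and the diagram \eqref{eq:frzg-ev}, respectively. You also correctly identify that the technical weight rests on step (i), handled in the paper by a term-by-term induction along the $t$-analog Frenkel--Mukhin algorithm; the only small imprecision in your commentary is that the failure for exceptional types is not that the embedding is combinatorially incompatible but simply that one cannot produce a compatible $\tfg$ of arbitrarily large rank.
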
  
A proof will be given in the next section (see \S\ref{Ssec:proofHconj}). 
Our proof provides a new alternative proof of Theorem \ref{thm:Nak} for type $\mathrm{AD}$ without appealing to the theory of Nakajima quiver varieties, and an alternative proof of Theorem \ref{thm:H-conj-B} for type $\mathrm{B}$.

\section{Freezing operator}\label{Sec:frz}

In this section, we give a proof of our first main result (= Theorem \ref{thm:H-conj-C}).
To this end, we first introduce the freezing operator as a certain operator on the pointed elements (in the sense of \S\ref{Ssec:pt}) of the (quantum) torus algebras in \S\ref{Ssec:frz}.  
Our definition of freezing operator is an interpretation (through \cite{HL16}) of the freezing operator introduced by the second named author in \cite{qin2023analogs} in the general context of cluster algebras.  
We investigate the behavior of $q$ and $(q,t)$-characters under the freezing operator in \S\S\ref{Ssec:frz-chiq}--\ref{Ssec:frz-chiqt} (while one technical complement is postponed in the last \S\ref{Ssec:tFM}). 
The proof of Theorem \ref{thm:H-conj-C} is given in \S\ref{Ssec:proofHconj}.

\subsection{Pointed elements}\label{Ssec:pt}
Let $\fg$ be a finite-dimensional complex simple Lie algebra.
We retain the notations of the previous section.
Following the terminology in the theory of cluster algebras \cite{qin2017triangular}, 
we say that an element $y$ of $\cY_t$ (resp.\ $\cY$) is $m$-pointed, for $m \in \cM$, if it is written in the form
\begin{equation}\label{eq:w-pointed} 
y= m + \sum_{m' \in \cM, m' < m} c_{m'}m'
\end{equation}
for some $c_{m'} \in \Z[t^{\pm 1/2}]$ (resp.\ $c_{m'} \in \Z$).
For a dominant monomial $m \in \cM_+$, the elements $F_t(m)$, $E_t(m)$,  $\chi_{q,t}(L(m))$ of $K_t(\Cc_\Z)$ are all $m$-pointed by Theorem \ref{Thm:F_t}, as well as the $q$-characters $\chi_q(L(m)) \in \cY$ by Theorem \ref{thm:FM} (2).  
An element of $\cY_t$ or $\cY$ is said to be pointed if it is $m$-pointed for some $m \in \cM$.
We denote by $\cY_t^\ptd$ (resp.\ $\cY^\ptd$) the set of pointed elements of $\cY_t$ (resp.\ $\cY$).
It is clear that the evaluation homomorphism $\ev_{t=1} \colon \cY_t \to \cY$ sends $m$-pointed elements to $m$-pointed elements for each $m \in \cM$.
In particular, it restricts to the map 
\[\ev_{t=1} \colon \cY_t^\ptd \to \cY^\ptd.\] 

\subsection{Freezing operator}\label{Ssec:frz}
Let $\tfg$ be another finite-dimensional complex simple Lie algebra.
We denote by $(\tilde{c}_{ij})_{i,j \in \tII}$ and $(\tilde{d}_i)_{i \in \tII}$, the Cartan matrix of $\tfg$ and its minimal symmetrizer.
In what follows, we assume that there is an inclusion $\II \subset \tII$ satisfying the condition:
\begin{equation}\label{eq:I-incl} 
\tilde{c}_{ij} = c_{ij} \quad \text{and} \quad  \tilde{d}_{i} = d_i  \quad \text{for any $i,j \in \II$}.
\end{equation}
Note that the lacing number of $\tfg$ is the same as the lacing number $r^\vee$ of $\fg$ in this case.

\begin{eg}\label{eg:incl}
Let $\fg$ be of type $\mathrm{X}_n$ and $\tfg$ of type $\mathrm{X}_{\tilde{n}}$ with $\mathrm{X} \in \{\mathrm{A,B,C,D}\}$ and $n \le \tilde{n}$. 
When we label the nodes of Dynkin diagrams as in Example \ref{Ex:DynkinBC}, we have the inclusion $\II = [1,n] \subset \tII=[1,\tilde{n}]$ satisfying the above assumption \eqref{eq:I-incl}.  
\end{eg}
We set $\tcY \seq \Z[\tilde{Y}_{i,p}^{\pm 1} \mid i \in \tII, p \in \Z]$ and write $\tcM$ (resp.\ $\tcM_+$) for its subset of Laurent monomials (resp.\ dominant monomials).   
For each $(i,p) \in \tII \times \Z$, let $\tilde{A}_{i,p}$ denote the analog of $A_{i,p}$ for $\tfg$ defined as in \eqref{eq:defA} with $(\tilde{c}_{ij})_{i,j \in \tII}$ and $(\tilde{d}_i)_{i \in \tII}$. 
We define the Nakajima partial ordering $\le$ of the set $\tcM$ with these variables $\tilde{A}_{i,p}$. 
We also define another partial ordering $\le_\II$ associated with the inclusion $\II \subset \tII$ by the following rule: For $m,m' \in \tcM$, we have $m \le_\II m'$ if and only if $m^{-1}m'$ is a monomial in the variables $\tilde{A}_{i,p}$ with $(i,p) \in \II \times \Z$.
Note that $m \le_\II m'$ implies $m \le m'$, but the converse is not true in general (unless $\II = \tII$).
\begin{lem}\label{Lem:NakordI}
Let $m_1, m_2, m_3, m_4 \in \tcM$. 
\begin{itemize}
\item[$(1)$] Assuming $m_1 \le m_2 \le m_3$, we have $m_1 \le_\II m_3$ if and only if we have $m_1 \le_\II m_2 \le_\II m_3$.  
\item[$(2)$] Assuming $m_1 \le m_2$ and $m_3 \le m_4$, we have $m_1m_3 \le_\II m_2m_4$ if and only if we have both $m_1 \le_\II m_2$ and $m_3 \le_\II m_4$. 
\end{itemize} 
\end{lem}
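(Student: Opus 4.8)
The statement is purely combinatorial, concerning monomials in the $\tilde A_{i,p}$ with $(i,p)\in\tII\times\Z$ versus those with $(i,p)\in\II\times\Z$. The plan is to reduce everything to bookkeeping of exponent vectors. For a monomial $\mu = \prod_{(i,p)}\tilde A_{i,p}^{n_{i,p}}$ with $n_{i,p}\in\Z_{\ge 0}$ (the defining data of the relation $1\le\mu$), write $\supp(\mu)\subset\tII$ for the set of $i$ such that $n_{i,p}\ne 0$ for some $p$. The key observation I would isolate first is: if $m\le m'$ and $m\le_\II m'$ both hold, i.e.\ $m^{-1}m'$ is simultaneously an $\tilde A$-monomial with \emph{nonnegative} exponents and an $\tilde A_{i,p}$-monomial with $i\in\II$, then these two expressions \emph{coincide}, because the $\tilde A_{j,s}$ are multiplicatively independent in $\tcM$ (they are, since the map $\tcM\to\tcM$ given by the $\tilde A$'s is injective — this is the standard fact underlying the Nakajima ordering being a genuine partial order). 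So ``$m\le_\II m'$'' is equivalent to ``$m\le m'$ and the unique exponent vector witnessing $m\le m'$ is supported on $\II\times\Z$''.

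With that reformulation, part (2) is essentially immediate. Assume $m_1\le m_2$, $m_3\le m_4$, so $m_2 m_1^{-1} = \prod \tilde A^{a_{i,p}}$ and $m_4 m_3^{-1} = \prod \tilde A^{b_{i,p}}$ with $a_{i,p},b_{i,p}\ge 0$. Then $(m_2 m_4)(m_1 m_3)^{-1} = \prod \tilde A^{a_{i,p}+b_{i,p}}$, and since $a_{i,p}+b_{i,p}\ge 0$ this is the unique exponent vector witnessing $m_1 m_3\le m_2 m_4$. Now that vector is supported on $\II\times\Z$ if and only if each of $a_{i,p}+b_{i,p}$ vanishes for $i\notin\II$, and since $a_{i,p},b_{i,p}\ge 0$ a sum vanishes iff both summands vanish; thus it is supported on $\II\times\Z$ iff both $(a_{i,p})$ and $(b_{i,p})$ are, i.e.\ iff $m_1\le_\II m_2$ and $m_3\le_\II m_4$. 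Conversely, the forward direction (that $m_1\le_\II m_2$ and $m_3\le_\II m_4$ imply $m_1 m_3\le_\II m_2 m_4$) follows by just multiplying the $\tilde A_{i,p}$-monomials, which is trivial and noted in the text.

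Part (1) is similar but uses additivity along a chain. Assume $m_1\le m_2\le m_3$; write $m_2 m_1^{-1}=\prod\tilde A^{a_{i,p}}$ and $m_3 m_2^{-1}=\prod\tilde A^{b_{i,p}}$ with nonnegative exponents, so $m_3 m_1^{-1}=\prod\tilde A^{a_{i,p}+b_{i,p}}$ is the witnessing vector for $m_1\le m_3$. If $m_1\le_\II m_3$, then $a_{i,p}+b_{i,p}=0$ for all $i\notin\II$; nonnegativity forces $a_{i,p}=b_{i,p}=0$ there, giving $m_1\le_\II m_2$ and $m_2\le_\II m_3$. The converse is again just multiplicativity. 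The main (and really only) subtlety to get right is the multiplicative independence of the $\tilde A_{i,p}$ in $\tcM$ — which guarantees uniqueness of the witnessing exponent vector and hence that ``nonnegative and $\II$-supported'' is an unambiguous condition; I would cite the relevant fact from \cite{FM} or \cite{Her} (or the discussion around the definition of the Nakajima ordering above) rather than reprove it. Everything else is elementary manipulation of $\Z_{\ge 0}$-vectors, so I would present it compactly.
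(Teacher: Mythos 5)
Your proof is correct and takes essentially the same approach as the paper's: the paper's own proof is a one-liner invoking the algebraic (i.e.\ multiplicative) independence of the set $\{\tilde A_{i,p}^{-1}\}_{(i,p)\in\tII\times\Z}$, and your argument simply unwinds that fact into explicit nonnegative exponent-vector bookkeeping, which is exactly what ``immediate from the definition'' is shorthand for here.
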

\begin{proof}
These are immediate from the definition of $\le_\II$ and the fact that the set $\{\tilde{A}_{i,p}^{-1} \mid (i,p) \in \tII \times \Z\}$ is algebraically independent.  
\end{proof}

Consider the ring homomorphism $\res_\II \colon \tcY \to \cY$ given by 
\[ \res_{\II}(\tilde{Y}_{i,p}) \seq \begin{cases}
Y_{i,p} & \text {if $i \in \II$},\\
1 & \text{if $i \not \in \II$}
\end{cases} \]
for $(i,p) \in \tII \times \Z$.
It restricts to the maps $\tcM \to \cM$ and $\tcM_+ \to \cM_+$.
We have $\res_{\II}(\tilde{A}_{i,p}) = A_{i,p}$ for any $i,p \in \II \times \Z$. 
However, when $i \in \tII\setminus \II$, it is not necessarily true that we have $\res_\II(\tilde{A}_{i,p}) = 1$.
In particular, for $m,m' \in \tcM$, $m\le m'$ does not imply $\res_\II(m) \le\res_\II(m')$ in general.
On the other hand, the following implication is true for $m,m' \in \tcM$:
\begin{equation} \label{eq:imply}
m \le_\II m' \quad  \Longrightarrow \quad \res_\II(m) \le \res_\II(m').
\end{equation} 

Let $\tcY_t \seq \bigoplus_{m \in \tcM}\Z[t^{\pm 1/2}]m$ equipped with the deformed product $*$ defined similarly for $\cY_t$ but using the deformed Cartan matrix of $\tfg$.    
We linearly extends the map $\res_\II \colon \tcM \to \cM$ to the $\Z[t^{\pm 1/2}]$-linear map $\res_\II \colon \tcY_t \to \cY_t$.
Note that $\res_\II$ does not intertwine the deformed products in general.   

\begin{lem}\label{Lem:res}
For any $m_i,m_i' \in \tcM$ satisfying $m_i' \le_\II m_i$ $(i=1,2)$, we have
\[ t^{-\tilde{\gamma}(m_1, m_2)/2}\res_\II(m'_1 * m'_2) = t^{-\gamma(\res_\II(m_1),\res_\II(m_2))/2} \res_\II(m_1') * \res_\II(m_2'),\]
where $\tilde{\gamma} \colon \tcM \times \tcM \to \Z$ is the $\tfg$-analog of $\gamma$. 
\end{lem}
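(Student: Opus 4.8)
\emph{Proof plan.} The plan is to reduce the asserted equality to a computation of $t$-exponents. Observe first that both sides are, up to a power of $t^{1/2}$, equal to the single monomial $\res_\II(m_1')\res_\II(m_2') = \res_\II(m_1'm_2') \in \cM$: in $\tcY_t$ we have $m_1' * m_2' = t^{\tilde{\gamma}(m_1',m_2')/2}m_1'm_2'$, and applying the $\Z[t^{\pm 1/2}]$-linear map $\res_\II$, which is multiplicative on monomials, gives $\res_\II(m_1' * m_2') = t^{\tilde{\gamma}(m_1',m_2')/2}\res_\II(m_1')\res_\II(m_2')$, while in $\cY_t$ we have $\res_\II(m_1') * \res_\II(m_2') = t^{\gamma(\res_\II(m_1'),\res_\II(m_2'))/2}\res_\II(m_1')\res_\II(m_2')$. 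Hence, after cancelling the common monomial factor, the lemma is equivalent to the numerical identity
\[ \tilde{\gamma}(m_1',m_2') - \tilde{\gamma}(m_1,m_2) = \gamma\bigl(\res_\II(m_1'),\res_\II(m_2')\bigr) - \gamma\bigl(\res_\II(m_1),\res_\II(m_2)\bigr). \]

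The next step is to exploit the hypothesis $m_i' \le_\II m_i$. By definition of $\le_\II$ it means exactly that $a_i \seq (m_i')^{-1}m_i$ is a monomial in the variables $\tilde{A}_{j,p}$ with $j \in \II$; setting $b_i \seq \res_\II(a_i)$, which is then the corresponding monomial in the $A_{j,p}$ with $j \in \II$, we have $\res_\II(m_i) = \res_\II(m_i')\,b_i$. Expanding $\tilde{\gamma}(m_1,m_2) = \tilde{\gamma}(m_1'a_1,\,m_2'a_2)$ and $\gamma(\res_\II(m_1),\res_\II(m_2))$ by bilinearity and cancelling the common terms $\tilde{\gamma}(m_1',m_2')$ and $\gamma(\res_\II(m_1'),\res_\II(m_2'))$, the identity above reduces to
\[ \tilde{\gamma}(m_1',a_2) + \tilde{\gamma}(a_1,m_2') + \tilde{\gamma}(a_1,a_2) = \gamma(\res_\II(m_1'),b_2) + \gamma(b_1,\res_\II(m_2')) + \gamma(b_1,b_2). \]
By the skew-symmetry of $\gamma$ and $\tilde{\gamma}$, every summand on either side is an instance of the single statement: for all $m \in \tcM$ and every monomial $a$ in the $\tilde{A}_{j,p}$ with $j \in \II$, one has $\tilde{\gamma}(m,a) = \gamma(\res_\II(m),\res_\II(a))$.

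To prove this last statement I would use $\Z$-bilinearity to reduce to the case $m = \tilde{Y}_{i,s}$ with $i \in \tII$ and $a = \tilde{A}_{j,p}$ with $j \in \II$, i.e.\ to the identity $\tilde{\gamma}(\tilde{Y}_{i,s}, \tilde{A}_{j,p}) = \gamma(\res_\II(\tilde{Y}_{i,s}), A_{j,p})$. By skew-symmetry and formula \eqref{eq:gamma} applied to $\tfg$, the left-hand side equals $-2\delta_{ij}\bigl(\delta_{p-s,\tilde{d}_j} - \delta_{p-s,-\tilde{d}_j}\bigr)$. If $i \notin \II$ then $i \ne j$ and both sides vanish, the right-hand side because $\res_\II(\tilde{Y}_{i,s}) = 1$; if $i \in \II$ then $\res_\II(\tilde{Y}_{i,s}) = Y_{i,s}$ and the right-hand side equals $-2\delta_{ij}\bigl(\delta_{p-s,d_j} - \delta_{p-s,-d_j}\bigr)$, which agrees with the left-hand side since $\tilde{d}_j = d_j$ for $j \in \II$ by \eqref{eq:I-incl}.

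I do not anticipate a genuine difficulty here; the only subtlety is that $\res_\II$ does not intertwine the two deformed products, so the $t$-exponents must be tracked explicitly rather than by ``pushing $\res_\II$ through $*$'', and the role of the hypothesis $m_i' \le_\II m_i$ is precisely to guarantee that $m_i$ and $m_i'$ differ by a monomial supported on the $\tilde{A}_{j,p}$ with $j$ in the smaller index set $\II$ — the variables on which $\res_\II$ is compatible with the passage from $\tilde{\gamma}$ to $\gamma$.
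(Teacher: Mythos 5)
Your proof is correct and is essentially the paper's argument, fully spelled out: the paper proves Lemma \ref{Lem:res} simply by citing ``the formulas \eqref{eq:gamma} and our assumption \eqref{eq:I-incl},'' and your reduction to the numerical identity $\tilde{\gamma}(m,a) = \gamma(\res_\II(m),\res_\II(a))$ for $m\in\tcM$ and $a$ a monomial in $\tilde{A}_{j,p}$ with $j\in\II$, verified on generators via the second formula of \eqref{eq:gamma} and $\tilde{d}_j = d_j$ from \eqref{eq:I-incl}, is exactly the computation those citations are pointing at.
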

\begin{proof}
This is a consequence of the formulas \eqref{eq:gamma} and our assumption \eqref{eq:I-incl}.
\end{proof}
Now we are ready to define the freezing operator. 
Similarly to \cite[Definition 3.5]{qin2023analogs}, for any $m$-pointed element $y$ of $\tcY_t$ (resp.\ $\tcY$) written as
\[ y= m + \sum_{m' \in \tcM, m' < m} c_{m'}m'\]
with $c_{m'} \in \Z[t^{\pm 1/2}]$ (resp.\ $c_{m'} \in \Z$), we define the element $\frz^{\tfg}_{\fg}(y)$ to be the following $\res_\II(m)$-pointed element of $\cY_t$ (resp.\ $\cY$):
\[ \frz^{\tfg}_{\fg}(y) \seq \res_\II \left( m + \sum_{m' \in \tcM, m' <_\II m} c_{m'}m' \right).\]
In other words, the element $\frz^{\fg}_{\tfg}(y)$ is obtained from $y$ discarding the terms $c_{m'}m'$ with $m' \not \le_\II m$ and then applying $\res_\II$.
Note that $\res_\II(y)$ is indeed $\res_\II(m)$-pointed by \eqref{eq:imply}. 
The assignment $y \mapsto \frz^{\tfg}_{\fg}(y)$ gives rise to the maps $\frz^{\tfg}_{\fg} \colon\tcY_t^\ptd \to \cY^\ptd_t$ and $\frz^{\tfg}_{\fg} \colon\tcY^\ptd \to \cY^\ptd$, which we call the freezing operators.
By construction, the following diagram commutes:
\begin{equation} \label{eq:frzg-ev}
\vcenter{\xymatrix{ 
\tcY_t^\ptd \ar[r]^-{\frz^{\tfg}_{\fg}}\ar[d]_-{\ev_{t=1
}} & \cY^\ptd_t \ar[d]^-{\ev_{t=1}}\\
\tcY^\ptd \ar[r]^-{\frz^{\tfg}_{\fg}} & \cY^\ptd.
}}
\end{equation}

\begin{lem}\label{Lem:frz}
For $i = 1,2$, let $m_i \in \tcM$ and $y_i \in \tcY_t^{\ptd}$ an $m_i$-pointed element.
Then, $t^{-\tilde{\gamma}(m_1, m_2)/2} y_1 * y_2$ is $(m_1m_2)$-pointed and we have
\[ \frz^{\tfg}_{\fg}(t^{-\tilde{\gamma}(m_1, m_2)/2} y_1 * y_2) = t^{-\gamma(\res_\II(m_1), \res_\II(m_2))/2} \frz^{\tfg}_{\fg}(y_1) * \frz^{\tfg}_{\fg}(y_2).\]
\end{lem}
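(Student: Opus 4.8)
The plan is to reduce the multiplicativity of $\frz^{\tfg}_{\fg}$ to the corresponding statement for $\res_\II$ already packaged in Lemma \ref{Lem:res}, by carefully tracking which cross-terms survive the freezing truncation. First I would write $y_i = m_i + \sum_{m_i' <_\II m_i} c_{m_i'} m_i' + (\text{terms } m_i' \not\le_\II m_i)$, so that by definition $\frz^{\tfg}_{\fg}(y_i) = \res_\II\bigl(m_i + \sum_{m_i' <_\II m_i} c_{m_i'} m_i'\bigr)$. Expanding the product $y_1 * y_2$, a generic term is $c_{m_1'} c_{m_2'}\, m_1' * m_2' = c_{m_1'} c_{m_2'}\, t^{\tilde\gamma(m_1',m_2')/2} m_1' m_2'$ with $m_i' \le m_i$, and $m_1' m_2' \le m_1 m_2$; hence $t^{-\tilde\gamma(m_1,m_2)/2} y_1 * y_2$ is $(m_1 m_2)$-pointed once one checks that the leading term $m_1 * m_2$ contributes exactly $m_1 m_2$ after the overall normalization, which is immediate.

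Next I would identify, among these terms, those with $m_1' m_2' \le_\II m_1 m_2$. By Lemma \ref{Lem:NakordI}(2), applied with the hypotheses $m_1' \le m_1$ and $m_2' \le m_2$, we have $m_1' m_2' \le_\II m_1 m_2$ \emph{if and only if} both $m_1' \le_\II m_1$ and $m_2' \le_\II m_2$. This is the key combinatorial point: the surviving cross-terms in $\frz^{\tfg}_{\fg}(t^{-\tilde\gamma(m_1,m_2)/2} y_1 * y_2)$ are precisely products of surviving terms of $y_1$ with surviving terms of $y_2$, so that the truncation is ``multiplicative''. Consequently $\frz^{\tfg}_{\fg}(t^{-\tilde\gamma(m_1,m_2)/2} y_1 * y_2)$ equals $t^{-\tilde\gamma(m_1,m_2)/2} \res_\II$ applied to $\bigl(\sum_{m_1' \le_\II m_1} c_{m_1'} m_1'\bigr) * \bigl(\sum_{m_2' \le_\II m_2} c_{m_2'} m_2'\bigr)$, term by term.

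Finally I would invoke Lemma \ref{Lem:res} on each pair $(m_1', m_2')$ with $m_i' \le_\II m_i$: it gives $t^{-\tilde\gamma(m_1,m_2)/2} \res_\II(m_1' * m_2') = t^{-\gamma(\res_\II(m_1), \res_\II(m_2))/2} \res_\II(m_1') * \res_\II(m_2')$, where crucially the normalizing exponents on both sides depend only on $m_1, m_2$ (not on $m_1', m_2'$), so they factor out of the sum. Summing over all surviving $m_1', m_2'$ and recognizing $\sum_{m_i' \le_\II m_i} c_{m_i'} \res_\II(m_i') = \frz^{\tfg}_{\fg}(y_i)$ yields the claimed identity $\frz^{\tfg}_{\fg}(t^{-\tilde\gamma(m_1,m_2)/2} y_1 * y_2) = t^{-\gamma(\res_\II(m_1),\res_\II(m_2))/2} \frz^{\tfg}_{\fg}(y_1) * \frz^{\tfg}_{\fg}(y_2)$.

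The main obstacle, and the only genuinely non-formal step, is verifying that no ``accidental'' term survives: one must be sure that a product $m_1' m_2'$ with, say, $m_1' \not\le_\II m_1$ can never satisfy $m_1' m_2' \le_\II m_1 m_2$. This is exactly what the ``only if'' direction of Lemma \ref{Lem:NakordI}(2) rules out, using the algebraic independence of the $\{\tilde A_{i,p}^{-1}\}$; so the proof really rests on having that lemma in hand, together with the observation that all relevant exponents of $t$ in the $*$-products collapse to the single normalization factor $t^{-\tilde\gamma(m_1,m_2)/2}$ by bilinearity of $\tilde\gamma$. I would also double-check the edge case where some $c_{m'}$ vanish or where $m_1' m_2'$ coincides for different pairs $(m_1', m_2')$ — but these cause no trouble since everything is an honest finite $\Z[t^{\pm 1/2}]$-linear identity in the quantum torus.
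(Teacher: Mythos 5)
Your proof is correct and takes exactly the route the paper intends: the paper's proof consists of the single sentence that the assertion follows from Lemma~\ref{Lem:NakordI}~(2) and Lemma~\ref{Lem:res}, and your argument is precisely the fleshed-out version of that reduction, using Lemma~\ref{Lem:NakordI}~(2) to show the surviving cross-terms factor and Lemma~\ref{Lem:res} to match the $t$-normalizations term by term.
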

\begin{proof}
The assertion follows from Lemma \ref{Lem:NakordI} (2) and Lemma \ref{Lem:res}. 
\end{proof}

\subsection{Freezing $q$-characters}\label{Ssec:frz-chiq}
For a dominant monomial $m \in \tcM_+$ for $\tfg$, we denote by $\tL(m)$ the corresponding simple $\tqlp$-module in the category $\tilde{\Cc}_\Z$, the analog of $\Cc_\Z$ for $\tfg$. The next statement is a crucial result due to Hernandez \cite{Her08} reinterpreted in our language. 

\begin{prop}[cf.\ {\cite[Lemma 5.9]{Her08}}] \label{prop:frz-chiq}
For any $m \in \tcM_+$, we have
\begin{equation}\label{eq:frz-chiq}
\frz^{\tfg}_{\fg}(\chi_q(\tL(m))) = \chi_q(L(\res_{\II}(m))). 
\end{equation}
\end{prop}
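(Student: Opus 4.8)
The plan is to adapt Hernandez's restriction lemma \cite[Lemma~5.9]{Her08}, recast in terms of the freezing operator. The representation-theoretic picture behind the statement is the following. By the compatibility \eqref{eq:I-incl}, the Drinfeld loop generators $\{x^{\pm}_{i,l}, h_{i,m}, k_i \mid i \in \II\}$ satisfy inside $\tqlp$ exactly the defining relations of $\qlp$; they therefore generate a copy of $\qlp$, which is moreover a Hopf subalgebra, so restriction along this embedding gives an exact monoidal functor $\operatorname{Res}^{\tfg}_{\fg} \colon \tilde{\Cc}_\Z \to \Cc_\Z$. Since the $q$-character records the joint spectrum of the commuting family $\{h_{i,m}, k_i\}$ and this restriction simply forgets the coordinates indexed by $\tII \setminus \II$, one obtains $\chi_q(\operatorname{Res}^{\tfg}_{\fg}\tL(m)) = \res_\II(\chi_q(\tL(m)))$ for every $m \in \tcM_+$. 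The content of the proposition is thus that the piece of this restricted $q$-character singled out by $\frz^{\tfg}_{\fg}$ equals $\chi_q(L(\res_\II(m)))$.

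Next I would record the bookkeeping. By definition $\frz^{\tfg}_{\fg}$ retains from $\chi_q(\tL(m))$ exactly the monomials $m'$ with $m' \le_\II m$, and on the set $\{m' \in \tcM \mid m' \le_\II m\}$ the homomorphism $\res_\II$ is injective — using $\res_\II(\tilde A_{i,p}) = A_{i,p}$ for $i \in \II$ together with the algebraic independence of $\{A_{i,p}^{-1} \mid i \in \II,\ p \in \Z\}$ — with image precisely $\{M' \in \cM \mid M' \le \res_\II(m)\}$. Hence $\frz^{\tfg}_{\fg}(\chi_q(\tL(m)))$ is an $\res_\II(m)$-pointed element of $\cY$, matching the pointedness of $\chi_q(L(\res_\II(m)))$ from Theorem~\ref{thm:FM}~(2); and, since for $i \in \II$ the map $\res_\II$ sends $\tilde Y_{i,p}(1 + \tilde A_{i,p+d_i}^{-1})$ to $Y_{i,p}(1 + A_{i,p+d_i}^{-1})$ and $\tilde Y_{j,s}^{\pm 1}$ ($j \ne i$) to $Y_{j,s}^{\pm 1}$ or $1$, it carries $\tilde K_i$ into $K_i$, consistently with the expectation that both sides of \eqref{eq:frz-chiq} lie in $\bigcap_{i\in\II}K_i = \chi_q(K(\Cc_\Z))$.

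With these compatibilities in hand, \cite[Lemma~5.9]{Her08} applies: because the $\II$-part of the Cartan datum of $\tfg$ coincides with that of $\fg$ by \eqref{eq:I-incl}, the coefficients of $\chi_q(\tL(m))$ along $\{m' \le_\II m\}$ satisfy the same family of $\mathfrak{sl}_2^{(i)}$-type Frenkel--Mukhin constraints, indexed by $i \in \II$ and governed by the Nakajima order, that characterize $\chi_q(L(\res_\II(m)))$ among $\res_\II(m)$-pointed elements of $\bigcap_{i\in\II}K_i$. Equivalently, $L(\res_\II(m))$ is the unique Jordan--H\"older constituent of $\operatorname{Res}^{\tfg}_{\fg}\tL(m)$ whose $q$-character has a monomial in $\{M' \le \res_\II(m)\}$, and it occurs there with multiplicity one. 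Transporting this statement through the identifications above yields \eqref{eq:frz-chiq}.

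The step I expect to be the main obstacle is exactly this last input: inside the (in general non-semisimple) module $\operatorname{Res}^{\tfg}_{\fg}\tL(m)$ one must control which simple constituents reach into the region $\{M' \le \res_\II(m)\}$, equivalently check that the Frenkel--Mukhin recursion restricted to the indices $i \in \II$ stays closed within that region. This is the substance of \cite{Her08}, and it transfers to our setting thanks to \eqref{eq:I-incl}. It is worth stressing the asymmetry noted after \eqref{eq:imply}: $m' \le_\II m$ is strictly stronger than $m' \le m$ unless $\II = \tII$, so $\frz^{\tfg}_{\fg}$ genuinely discards part of $\chi_q(\tL(m))$, and the assertion of the proposition is that for a \emph{simple} module the discarded part is precisely the part invisible in $\chi_q(L(\res_\II(m)))$. (A purely combinatorial argument through the Frenkel--Mukhin algorithm would only give the analogue for the algorithm's output, which is known to compute $\chi_q$ of a simple module solely in the special case; this is why the module-theoretic input, equivalently \cite[Lemma~5.9]{Her08}, seems unavoidable here.)
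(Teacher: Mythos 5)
Your setup is right: restriction along $\qlp\hookrightarrow\tqlp$ is compatible with $q$-characters, $\res_\II$ is injective on $\{m'\le_\II m\}$, and the substantive input is the module-theoretic Lemma~5.9 of \cite{Her08}. But the step where you transport that input has a real gap.

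Your ``equivalently'' reformulation --- that $L(\res_\II(m))$ is the unique Jordan--H\"older constituent of $\operatorname{Res}\tL(m)$ with a monomial in $\{M'\le\res_\II(m)\}$ --- does not actually yield the statement, and conflates two different things. The frozen element is $\sum_{m'\le_\II m}[\chi_q(\tL(m))]_{m'}\,\res_\II(m')$, whereas the ``$\{M'\le\res_\II(m)\}$-part'' of $\res_\II(\chi_q(\tL(m)))$ is $\sum_{m':\,\res_\II(m')\le\res_\II(m)}[\chi_q(\tL(m))]_{m'}\,\res_\II(m')$. These generally differ: there can be monomials $m'$ appearing in $\chi_q(\tL(m))$ with $m'\not\le_\II m$ (some factor $\tilde A_{j,p}^{-1}$, $j\in\tII\setminus\II$, appears in $m'/m$) but with $\res_\II(m')\le\res_\II(m)$ nonetheless, precisely because $\res_\II(\tilde A_{j,p})\neq 1$ for $j\notin\II$, as the paper warns right after defining $\res_\II$. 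So even if your uniqueness claim about constituents were true, it would control the second sum, not the first. And the earlier phrasing --- that the Frenkel--Mukhin constraints single out $\chi_q(L(\res_\II(m)))$ among $\res_\II(m)$-pointed elements of $\bigcap_i K_i$ --- is not a valid characterization either, as you yourself concede in the final parenthesis.

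The paper's proof sidesteps both issues by working at the level of \emph{classical} weight spaces rather than regions of the Nakajima order on $\cM$. Let $v$ be the $\ell$-highest weight vector of $\tL(m)$ and let $L\subset\tL(m)$ be the $\qlp$-submodule it generates. By \cite[Lemma~5.9]{Her08}, $L\cong L(\res_\II(m))$. The key observation, easy from the classical weight decomposition of $\tL(m)$, is that $L$ is exactly the direct sum of the weight spaces of $\tL(m)$ whose $\tfg$-weight lies in $\tilde\lambda-\sum_{i\in\II}\Z_{\ge0}\alpha_i$. A monomial $m'$ of $\chi_q(\tL(m))$ (necessarily $m'\le m$) has classical weight in this set if and only if $m'/m$ involves only $\tilde A_{i,p}^{-1}$ with $i\in\II$, i.e.\ $m'\le_\II m$. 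Thus the $\ell$-weight spaces of $\tL(m)$ lying in $L$ are precisely those indexed by $m'\le_\II m$, and applying $\res_\II$ to these $\ell$-weights gives exactly the $\qlp$-$\ell$-weights of $L$. This gives $\chi_q(L)=\frz^{\tfg}_{\fg}(\chi_q(\tL(m)))$ directly, without any appeal to uniqueness of constituents or to Frenkel--Mukhin. You should replace your reformulation with this weight-space identification; that is the actual content being borrowed from Hernandez's lemma.
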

\begin{proof}
There is a $\C$-algebra homomorphism $\qlp \to \tqlp$ sending the generators $k_{i}^{\pm 1}, h_{i,m}, x^{\pm}_{i,l}$ with $i\in \II, l,m \in \Z, m\neq 0$ of $\qlp$ to the corresponding generators of $\tqlp$, via which we can regard $\tL(m)$ as a $\qlp$-module.
Let $L \subset \tL(m)$ be the $\qlp$-submodule  generated by the $\ell$-highest weight vector of $\tL(m)$. 
Then, as discussed in the proof of \cite[Lemma 5.9]{Her08}, we have $L \cong L(\res_\II(m))$ as $\qlp$-modules. 
Moreover, by considering the classical weight decomposition of $\tL(m)$, it is clear that $L$ is the sum of all the (classical) weight spaces of $\tL(m)$ whose weight is the highest weight of $\tL(m)$ minus a sum of the simple roots indexed by $\II \subset \tII$.  
This implies $\chi_q(L) = \frz^{\tfg}_{\fg}(\chi_q(\tL(m)))$ and hence the result follows.
\end{proof}

\subsection{Freezing $(q,t)$-characters}\label{Ssec:frz-chiqt}

In this subsection, we are going to establish the $t$-analog of the equality \eqref{eq:frz-chiq} as in Theorem \ref{thm:frz-chiqt} below.
For any dominant monomial $m \in \tcM$, let $\tilde{F}_t(m)$ and $\tilde{E}_t(m)$ denote the $\tfg$-analogs of $F_t(m)$ and $E_t(m)$ respectively. They are $m$-pointed elements of the quantum Grothendieck ring $K_t(\tilde{\Cc}_\Z) \subset \tcY_t$ for $\tfg$ as well as the simple $(q,t)$-character $\chi_{q,t}(\tL(m))$.

\begin{prop}\label{Prop:frz-F_t}
For any $m \in \tcM_+$, we have
\[ \frz^{\tfg}_{\fg}(\tilde{F}_t(m)) = F_t(\res_\II(m)). \]
\end{prop}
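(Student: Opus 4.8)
The plan is to compare the defining algorithms for $\tilde F_t(m)$ over $\tfg$ and $F_t(\res_\II(m))$ over $\fg$ term by term, and to show that the $t$-analog of the Frenkel--Mukhin algorithm over $\tfg$, when restricted via $\frz^{\tfg}_{\fg}$, reproduces exactly the steps of the corresponding algorithm over $\fg$. Recall (to be spelled out in \S\ref{Ssec:tFM}) that $\tilde F_t(m)$ is built inductively: starting from the dominant term $m$, one adds, for each maximal dominant monomial $M$ already produced and each $i\in\tII$, a correction living in the image of $\tilde K_{i,t}$ which is forced by the membership condition $\tilde F_t(m)\in\bigcap_{i}\tilde K_{i,t}$, with coefficients determined recursively over the Nakajima ordering; the monomials occurring in $\tilde F_t(m)$ other than $m$ are all $<m$ and non-dominant.

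First I would record the key structural fact: by \eqref{eq:imply} and Lemma~\ref{Lem:NakordI}(1), the monomials $m'$ appearing in $\tilde F_t(m)$ with $m'\le_\II m$ are precisely those reached from $m$ by multiplying by the $\tilde A_{i,p}^{-1}$ with $i\in\II$, and these form a ``saturated'' lower set; applying $\res_\II$ to such $m'$ produces monomials $\le\res_\II(m)$ in $\cM$, while $\res_\II(\tilde A_{i,p}^{-1})=A_{i,p}^{-1}$ for $i\in\II$. So the combinatorics of the $\II$-part of the $\tfg$-algorithm matches, after applying $\res_\II$, the combinatorics of the $\fg$-algorithm for $F_t(\res_\II(m))$. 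The second ingredient is that the algorithm over $\fg$ only ever uses the subalgebras $K_{i,t}$ for $i\in\II$, and, crucially, that $\res_\II$ sends the generators \eqref{eq:Ki} of $\tilde K_{i,t}$ for $i\in\II$ to the generators of $K_{i,t}$: indeed $\res_\II(\tilde Y_{i,p}(1+\tilde A_{i,p+\tilde d_i}^{-1})) = Y_{i,p}(1+A_{i,p+d_i}^{-1})$ using \eqref{eq:I-incl}, and $\res_\II(\tilde Y_{j,s}^{\pm1})$ is $Y_{j,s}^{\pm1}$ or $1$. Combined with Lemma~\ref{Lem:res} (which guarantees that $\res_\II$ is compatible with the $*$-products on the $\le_\II$-truncations up to the expected power of $t$), this shows that $\frz^{\tfg}_{\fg}(\tilde F_t(m))$ lies in $K_{i,t}$ for every $i\in\II$, hence in $K_t(\Cc_\Z)$, and is $\res_\II(m)$-pointed with the correct form; by the uniqueness in Theorem~\ref{Thm:F_t} it must equal $F_t(\res_\II(m))$.

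I would run the argument by induction on the number of dominant monomials that can occur below $m$ in the Nakajima ordering, mirroring the recursion defining the algorithm. At the base case $m$ is $\le$-minimal among dominant monomials, where $\tilde F_t(m)=m$ (cf.\ Remark~\ref{rem:chiqt-fund}) and the claim is the identity $\res_\II(m)=\res_\II(m)$. For the inductive step, at each stage the next dominant monomial $M$ produced by the $\tfg$-algorithm either satisfies $M\le_\II m$, in which case $\res_\II(M)$ is the corresponding dominant monomial produced by the $\fg$-algorithm and the correction terms match under $\frz^{\tfg}_{\fg}$ by the two ingredients above; or $M\not\le_\II m$, in which case $M$ and all the monomials subsequently generated from it are discarded by the freezing operator and contribute nothing, while on the $\fg$-side no corresponding step occurs. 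The uniqueness clause of Theorem~\ref{Thm:F_t} then closes the induction, since the element we have constructed on the $\fg$-side is forced.

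The main obstacle I anticipate is the bookkeeping in that last step: one must check that the terms discarded by $\frz^{\tfg}_{\fg}$ never ``interfere'' with the surviving terms, i.e.\ that a monomial $m'\le_\II m$ occurring in $\tilde F_t(m)$ never gets a contribution to its coefficient that is routed through an intermediate dominant monomial $M\not\le_\II m$. This is where Lemma~\ref{Lem:NakordI}(1) is essential: if $m'\le m''\le m$ with $m'\le_\II m$ then $m'\le_\II m''\le_\II m$, so the whole ``path'' of the recursion computing the coefficient of $m'$ stays inside the $\le_\II$-lower set of $m$; hence the coefficient of $m'$ in $\tilde F_t(m)$ is computed by the same recursion that, after $\res_\II$, computes the coefficient of $\res_\II(m')$ in $F_t(\res_\II(m))$. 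Making this precise requires carefully matching the two instances of the algorithm (deferred to \S\ref{Ssec:tFM}), but conceptually it is exactly the content of Lemmas~\ref{Lem:NakordI}, \ref{Lem:res}, and \ref{Lem:frz} together with the compatibility of the generators \eqref{eq:Ki} with $\res_\II$.
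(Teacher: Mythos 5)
Your approach matches the paper's: compare the two instances of the $t$-analog of the Frenkel--Mukhin algorithm for $\tfg$ and for $\fg$, and use Lemma~\ref{Lem:NakordI}(1) to ensure that the recursion computing the coefficient of any monomial $m'\le_\II m$ never routes through an intermediate monomial $m''$ with $m''\not\le_\II m$, while $\res_\II(\tilde A_{i,p})=A_{i,p}$ for $i\in\II$ matches the step-by-step combinatorics. This is precisely the non-interference point the paper's proof in \S\ref{Ssec:tFM} turns on. The paper then does the bookkeeping by induction on the index $k$ of a total ordering of $D(m)$ (refining the Nakajima ordering), showing directly that $\tilde s(\tilde m_{\nu(k)})=s(m_k)$ and $\tilde s_i(\tilde m_{\nu(k)})=s_i(m_k)$; you instead propose to verify membership in each $K_{i,t}$ and the pointed shape, then invoke the uniqueness of Theorem~\ref{Thm:F_t}. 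Both are workable, but the direct coefficient match is cleaner, since it sidesteps having to argue separately that the only dominant monomial surviving the freezing is $\res_\II(m)$ (which requires the observation, made in the paper, that $m'\le_\II m$ automatically forces $m'\in\tcM_{j,+}$ for every $j\in\tII\setminus\II$).

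Two points in your sketch need repair. First, the base case is wrong: if $m=Y_{i,p}$ is $\le$-minimal among dominant monomials, $\tilde F_t(m)$ is the full fundamental $(q,t)$-character, which contains many non-dominant terms and is not equal to $m$; Remark~\ref{rem:chiqt-fund} says only that $F_t=E_t=\chi_{q,t}$ for fundamental monomials, not that these are single monomials. The correct base of the induction is $k=0$, where $s(m_0)=1$ and $s_i(m_0)=0$. Second, phrasing the induction as being over ``the number of dominant monomials below $m$'' does not match the structure of the algorithm, which recurses over \emph{all} monomials in $D(m)$ (dominant or not) in a total order compatible with the Nakajima ordering, and in fact assigns $s(m_k)=0$ to any dominant $m_k\ne m$. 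These are errors of presentation rather than of strategy, but they would need to be fixed for the argument to compile into a correct proof.
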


We postpone the proof of this proposition to \S\ref{Ssec:tFM} below as it is somewhat technical.
As an important special case, we get the following. 
\begin{cor}\label{Cor:frz-fund}
For any $(i,p) \in \tII \times \Z$, we have
\[ \frz^{\tfg}_{\fg}(\tilde{F}_t(Y_{i,p})) = \begin{cases}
F_t(Y_{i,p}) & \text{if $i \in \II$}, \\
1 & \text{if $i \in \tII \setminus \II$}.
\end{cases}\]
\end{cor}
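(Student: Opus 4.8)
The plan is to derive this as an immediate specialization of Proposition~\ref{Prop:frz-F_t}, taking the dominant monomial $m$ there to be the fundamental monomial $\tilde Y_{i,p} \in \tcM_+$ attached to $(i,p) \in \tII \times \Z$. The first step is to evaluate $\res_\II(\tilde Y_{i,p})$ directly from the definition of the restriction homomorphism $\res_\II \colon \tcY \to \cY$: one gets $\res_\II(\tilde Y_{i,p}) = Y_{i,p}$ when $i \in \II$ and $\res_\II(\tilde Y_{i,p}) = 1$ when $i \in \tII \setminus \II$. Plugging this into the identity $\frz^{\tfg}_{\fg}(\tilde F_t(m)) = F_t(\res_\II(m))$ immediately gives $\frz^{\tfg}_{\fg}(\tilde F_t(\tilde Y_{i,p})) = F_t(Y_{i,p})$ in the first case, and $\frz^{\tfg}_{\fg}(\tilde F_t(\tilde Y_{i,p})) = F_t(1)$ in the second.

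The only remaining point is then the normalization $F_t(1) = 1$. I would settle this via the uniqueness clause of Theorem~\ref{Thm:F_t}: the constant monomial $1$ belongs to each subalgebra $K_{i,t}$, hence to $K_t(\Cc_\Z) = \bigcap_{i \in \II} K_{i,t}$, and it trivially has the prescribed form $1 + \sum_{m' \in \cM \setminus \cM_+,\, m' < 1} s_1(m')\, m'$ with all coefficients equal to zero; by the uniqueness in Theorem~\ref{Thm:F_t} it must coincide with $F_t(1)$.

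Since the corollary is a formal consequence of Proposition~\ref{Prop:frz-F_t} (whose proof is postponed to \S\ref{Ssec:tFM}), there is essentially no obstacle internal to the corollary itself; the sole step that is not pure bookkeeping is the identification $F_t(1) = 1$, and even that reduces to a one-line uniqueness argument. All the genuine content lies in Proposition~\ref{Prop:frz-F_t} rather than in this special case.
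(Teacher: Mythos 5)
Your proof is correct and matches the paper's (implicit) reasoning: the corollary is stated without proof as an immediate special case of Proposition~\ref{Prop:frz-F_t}, obtained by taking $m = \tilde{Y}_{i,p}$ and using that $\res_\II$ sends $\tilde{Y}_{i,p}$ to $Y_{i,p}$ or $1$ according as $i\in\II$ or not. Your explicit verification that $F_t(1)=1$, via the uniqueness clause of Theorem~\ref{Thm:F_t} together with $1 \in K_t(\Cc_\Z)$, supplies the one detail the paper leaves tacit.
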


\begin{rem}
When $\fg$ is of type $\mathrm{ABC}$, Hernandez has shown in \cite[Proposition 7.2]{Her05} (recall also Remark \ref{rem:chiqt-fund}) that the equality $\chi_{q,t}(L(Y_{i,p})) = \chi_q(L(Y_{i,p}))$ in $\cY_t$ holds for any $(i,p) \in \II \times \Z$. 
Therefore, for $\fg$ of type $\mathrm{ABC}$, Corollary \ref{Cor:frz-fund} can be deduced directly from Proposition \ref{prop:frz-chiq} (and Remark \ref{rem:chiqt-fund}), without appealing to Proposition \ref{Prop:frz-F_t}.  
\end{rem}
\begin{prop}\label{Prop:frz-E_t}
For any $m \in \tcM_+$, we have
\[ \frz^{\tfg}_{\fg}(\tilde{E}_t(m)) = E_t(\res_\II(m)). \]
\end{prop}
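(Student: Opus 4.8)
The plan is to write $\tilde E_t(m)$ as a $*$-product of fundamental elements $\tilde F_t(Y_{i_a,p_a})$, apply the multiplicativity of the freezing operator from Lemma~\ref{Lem:frz} one factor at a time, and evaluate each factor via Corollary~\ref{Cor:frz-fund}. Concretely I would argue by induction on the length $l$ of an ordered factorization $m = Y_{i_1,p_1}\cdots Y_{i_l,p_l}$ with $p_1 \ge \cdots \ge p_l$, the base case $l=0$ (so $m = 1$, where both sides equal $1$) being trivial.

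The only preliminary I need is the elementary recursion for the standard basis: for any ordered factorization $m' = Y_{j_1,q_1}\cdots Y_{j_k,q_k}$ with $q_1 \ge \cdots \ge q_k$, setting $m'' = Y_{j_2,q_2}\cdots Y_{j_k,q_k}$ one has
\[ \tilde E_t(m') = t^{-\tilde\gamma(Y_{j_1,q_1},\,m'')/2}\; \tilde F_t(Y_{j_1,q_1}) * \tilde E_t(m''), \]
and the verbatim analogue holds on the $\fg$-side for $E_t$. Both follow by unwinding the definition~\eqref{eq:def-E_t} and using the bilinearity of $\tilde\gamma$ (resp.\ $\gamma$). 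Note that $m''$ again has weakly decreasing spectral parameters, and that $\res_{\II}$ applied to such a factorization again produces one with weakly decreasing parameters, so the two recursions can be iterated compatibly with $\res_{\II}$.

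For the inductive step, write $m = Y_{i_1,p_1}\cdot m'$ with $m' = Y_{i_2,p_2}\cdots Y_{i_l,p_l}$. Since $\tilde F_t(Y_{i_1,p_1})$ is $Y_{i_1,p_1}$-pointed by Theorem~\ref{Thm:F_t} and $\tilde E_t(m')$ is $m'$-pointed (\S\ref{Ssec:frz-chiqt}), Lemma~\ref{Lem:frz} applied to the recursion above gives
\[ \frz^{\tfg}_{\fg}(\tilde E_t(m)) = t^{-\gamma(\res_{\II}(Y_{i_1,p_1}),\,\res_{\II}(m'))/2}\; \frz^{\tfg}_{\fg}(\tilde F_t(Y_{i_1,p_1})) * \frz^{\tfg}_{\fg}(\tilde E_t(m')), \]
and the inductive hypothesis gives $\frz^{\tfg}_{\fg}(\tilde E_t(m')) = E_t(\res_{\II}(m'))$. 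Now split cases using Corollary~\ref{Cor:frz-fund}. If $i_1 \notin \II$, then $\res_{\II}(Y_{i_1,p_1}) = 1$, $\frz^{\tfg}_{\fg}(\tilde F_t(Y_{i_1,p_1})) = 1$, and $\res_{\II}(m) = \res_{\II}(m')$, so the right-hand side collapses to $E_t(\res_{\II}(m'))= E_t(\res_{\II}(m))$. If $i_1 \in \II$, then $\res_{\II}(Y_{i_1,p_1}) = Y_{i_1,p_1}$, $\frz^{\tfg}_{\fg}(\tilde F_t(Y_{i_1,p_1})) = F_t(Y_{i_1,p_1})$, and $\res_{\II}(m) = Y_{i_1,p_1}\cdot \res_{\II}(m')$ is an ordered factorization (as $p_1$ dominates every spectral parameter occurring in $\res_{\II}(m')$), so the right-hand side is exactly the $\fg$-side recursion computing $E_t(\res_{\II}(m))$. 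Either way the induction closes.

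The argument is essentially a bookkeeping of powers of $t$; the only genuine point is that the normalizing exponent $-\tilde\gamma(Y_{i_1,p_1},m')/2$ appearing on the source side is transported by $\frz^{\tfg}_{\fg}$ into precisely the exponent $-\gamma(\res_{\II}(Y_{i_1,p_1}),\res_{\II}(m'))/2$ needed to reassemble $E_t$ on the target side, and that the $\le_{\II}$-lower terms discarded by $\frz^{\tfg}_{\fg}$ on the two tensor factors do not interfere. Both are already packaged into Lemma~\ref{Lem:frz} (which rests on Lemma~\ref{Lem:NakordI}(2) and Lemma~\ref{Lem:res}, hence on the compatibility~\eqref{eq:I-incl} of the Cartan data). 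I therefore expect no serious obstacle beyond this bookkeeping: the substantive input, namely that $\frz^{\tfg}_{\fg}$ sends $\tilde F_t$ to $F_t$, has already been isolated in Proposition~\ref{Prop:frz-F_t}.
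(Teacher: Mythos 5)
Your proposal is correct and follows essentially the same route as the paper: the paper's proof simply states the result is an immediate consequence of the definition \eqref{eq:def-E_t}, Lemma~\ref{Lem:frz}, and Corollary~\ref{Cor:frz-fund}. Your induction on the length of the ordered factorization is exactly the bookkeeping that makes the word ``immediate'' precise, and all the intermediate verifications (the recursion for $\tilde E_t$, the case split on $i_1\in\II$ versus $i_1\notin\II$, and the transport of the normalizing power of $t$ through Lemma~\ref{Lem:frz}) check out.
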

\begin{proof}
Going back to the definition of the standard basis elements in \eqref{eq:def-E_t}, the assertion is an immediate consequence of Lemma \ref{Lem:frz} and Corollary \ref{Cor:frz-fund}.
\end{proof}

\begin{thm} \label{thm:frz-chiqt}
For any $m \in \tcM_+$, we have
\[ \frz^{\tfg}_{\fg}(\chi_{q,t}(\tL(m))) = \chi_{q,t}(L(\res_\II(m))). \]
\end{thm}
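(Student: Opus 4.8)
The plan is to pin down $\frz^{\tfg}_{\fg}(\chi_{q,t}(\tL(m)))$ via the uniqueness in Proposition~\ref{Prop:chiqt}, applied to $\fg$ and the dominant monomial $n_0 \coloneqq \res_\II(m)$. That is, I would check that this element (i) lies in $K_t(\Cc_\Z)$, (ii) is bar-invariant, and (iii) differs from $E_t(n_0)$ by an element of $\sum_{n < n_0} t^{-1}\Z[t^{-1}]\,E_t(n)$.

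The first step is to pass to the $\tfg$-standard basis: by Proposition~\ref{Prop:chiqt} for $\tfg$ one has $\chi_{q,t}(\tL(m)) = \sum_{m' \le m} P_{m',m}\,\tilde{E}_t(m')$ with $P_{m,m} = 1$ and $P_{m',m} \in t^{-1}\Z[t^{-1}]$ for $m' < m$; in particular only $\tilde{E}_t(m')$ with $m' \le m$ occur. The heart of the argument — and the step I expect to be the main obstacle — is that the (non-linear) operator $\frz^{\tfg}_{\fg}$ distributes over \emph{this particular} sum, namely
\[ \frz^{\tfg}_{\fg}\bigl(\chi_{q,t}(\tL(m))\bigr) = \sum_{m' \le_\II m} P_{m',m}\;\frz^{\tfg}_{\fg}\bigl(\tilde{E}_t(m')\bigr) = \sum_{m' \le_\II m} P_{m',m}\;E_t(\res_\II(m')), \]
the last equality being Proposition~\ref{Prop:frz-E_t}. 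To establish the first equality I would argue monomial by monomial. If $m' \not\le_\II m$, then any monomial $n$ occurring in $\tilde{E}_t(m')$ satisfies $n \le m' \le m$, so Lemma~\ref{Lem:NakordI}(1) gives that $n \le_\II m$ is equivalent to ``$n \le_\II m'$ and $m' \le_\II m$''; the second clause fails, so no monomial of $\tilde{E}_t(m')$ is $\le_\II m$. Hence, for any fixed monomial $n \le_\II m$, its coefficient in $\chi_{q,t}(\tL(m))$ equals $\sum_{m' \le_\II m} P_{m',m}\,[\tilde{E}_t(m') : n]$. Likewise, for $m' \le_\II m$ and a monomial $n$ occurring in $\tilde{E}_t(m')$ (so $n \le m' \le m$), Lemma~\ref{Lem:NakordI}(1) shows $n \le_\II m \iff n \le_\II m'$; thus the $\le_\II m$-truncation of $\chi_{q,t}(\tL(m))$ equals $\sum_{m' \le_\II m} P_{m',m}\,T_{m'}$, where $T_{m'}$ denotes the $\le_\II m'$-truncation of $\tilde{E}_t(m')$. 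Applying the ($\Z[t^{\pm 1/2}]$-linear) map $\res_\II$ and recalling that $\res_\II(T_{m'}) = \frz^{\tfg}_{\fg}(\tilde{E}_t(m'))$ by definition yields the displayed identity.

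Granting this identity, the three properties are routine. For (i): the right-hand side is a $\Z[t^{\pm 1/2}]$-combination of the standard basis elements $E_t(n) \in K_t(\Cc_\Z)$. For (ii): $\res_\II$ commutes with the bar-involutions (each inverts $t^{1/2}$ and fixes every monomial), and truncating an $m$-pointed element to its $\le_\II m$-terms also commutes with the bar-involution (it fixes each monomial); hence $\frz^{\tfg}_{\fg}$ commutes with the bar-involution on $m$-pointed elements, and $\chi_{q,t}(\tL(m))$ is bar-invariant. For (iii): regroup the right-hand side as $\sum_n Q_n\, E_t(n)$ with $Q_n = \sum_{m' \le_\II m,\ \res_\II(m') = n} P_{m',m}$. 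If $m' \le_\II m$ and $\res_\II(m') = n_0 = \res_\II(m)$, then $(m')^{-1}m$ is a monomial in the $\tilde{A}_{i,p}$ $(i \in \II)$ whose image under $\res_\II$ is the trivial monomial in the algebraically independent $A_{i,p}$, forcing $m' = m$; so $Q_{n_0} = P_{m,m} = 1$, every other $n$ with $Q_n \ne 0$ satisfies $n < n_0$ by \eqref{eq:imply} together with this last observation, and $Q_n \in t^{-1}\Z[t^{-1}]$ there. Uniqueness in Proposition~\ref{Prop:chiqt} then gives $\frz^{\tfg}_{\fg}(\chi_{q,t}(\tL(m))) = \chi_{q,t}(L(\res_\II(m)))$.
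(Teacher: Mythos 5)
Your proof is correct and follows the same route as the paper's: expand $\chi_{q,t}(\tL(m))$ in the $\tfg$-standard basis, apply $\frz^{\tfg}_{\fg}$ using Proposition~\ref{Prop:frz-E_t}, and then invoke the bar-invariance plus triangularity characterization of Proposition~\ref{Prop:chiqt}. The difference is one of rigor, not strategy: the paper simply asserts that $\frz^{\tfg}_{\fg}$ applied to the standard-basis expansion yields $E_t(\res_\II(m)) + \sum_{m' <_\II m} Q_{m,m'}(t)\,E_t(\res_\II(m'))$ ``by Proposition~\ref{Prop:frz-E_t},'' tacitly using that the (nonlinear) truncation step commutes with this particular sum; your monomial-by-monomial argument via Lemma~\ref{Lem:NakordI}(1) is exactly the justification that is left implicit there, and your observation that the fiber of $\res_\II$ over $\res_\II(m)$ inside $\{m' \le_\II m\}$ is a singleton likewise makes explicit why the leading coefficient is $1$.
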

\begin{proof}
By definition, the freezing operator $\frz^{\tfg}_\fg$ intertwines the bar-involution. 
As the simple $(q,t)$-character $\chi_{q,t}(\tL(m))$ is bar-invariant, the element $\frz^{\tfg}_{\fg}(\chi_{q,t}(\tL(m)))$ is bar-invariant.
By the other characterizing property of $\chi_{q,t}(\tL(m))$ in Proposition \ref{Prop:chiqt}, we have
\[ \chi_{q,t}(\tL(m)) = \tilde{E}_t(m) + \sum_{m' < m } Q_{m,m'}(t) \tilde{E}_t(m')\]
for some $Q_{m,m'}(t) \in t^{-1} \Z[t^{-1}]$.
Applying the freezing operator $\frz^{\tfg}_\fg$ to this equation, we have
\[ \frz^{\tfg}_{\fg}(\chi_{q,t}(\tL(m))) = E_t( \res_\II(m)) + \sum_{m' <_\II m } Q_{m,m'}(t) E_t(\res_\II(m'))\]
by Proposition \ref{Prop:frz-E_t}. 
Reminding \eqref{eq:imply}, one finds that the element $\frz^{\tfg}_{\fg}(\chi_{q,t}(\tL(m)))$ satisfies the two properties in Proposition \ref{Prop:chiqt} characterizing the simple $(q,t)$-character $\chi_{q,t}(L(\res_\II(m)))$. Thus, we obtain the desired equality. 
\end{proof}

\subsection{Proof of Theorem \ref{thm:H-conj-C}} \label{Ssec:proofHconj}
Now, we are ready to prove Theorem \ref{thm:H-conj-C}.
Let $n$ and $\tilde{n}$ be arbitrary integers satisfying $1<n<\tilde{n}$.
Let $\fg$ and $\tfg$ be complex finite-dimensional simple Lie algebras of type $\mathrm{X}_n$ and $\mathrm{X}_{\tilde{n}}$ respectively, with $\mathrm{X} \in \{\mathrm{A,B,C,D} \}$.  
We choose the inclusion $\II \subset \tII$ as in Example \ref{eg:incl} and retain the notations from the previous subsections.
We are going to apply the freezing operator $\frz^{\tfg}_\fg$ while fixing $n$ but choosing $\tilde{n}$ sufficiently large relative to a given simple module of $\qlp$; the precise bound (which depends on the simple module in question) will be given below.

Given an arbitrary dominant monomial $m \in \cM_+$, 
our goal is to show the equality
\begin{equation}\label{eq:goal}
\ev_{t=1}\chi_{q,t}(L(m)) = \chi_q(L(m)). 
\end{equation}
Choose a finite integer interval $[a,b] \subset \Z$ large enough so that $m$ belongs to $\cM[a,b]_+$.
Depending on this choice of interval $[a,b]$ (and hence on $m$), we choose the integer $\tilde{n}$ large enough so that $b-a$ is smaller than the lacing number times the dual Coxeter number $r^\vee \tilde{h}^\vee$  for $\tfg$.  
Explicitly, we choose $\tilde{n} \in \Z_{> n}$ so that we have 
\[
b-a < \begin{cases}
\tilde{n}+1 & \text{if $\mathrm{X=A}$}, \\
4\tilde{n}-2 & \text{if $\mathrm{X=B}$}, \\
2\tilde{n}+2 & \text{if $\mathrm{X=C}$}, \\
2\tilde{n}-2 & \text{if $\mathrm{X=D}$}.
\end{cases}
\]

Let $\tilde{m}\in\tcM[a,b]_+$ be the dominant monomial obtained from $m$ by replacing each factor $Y_{i,p}$ with $\tilde{Y}_{i,p}$. 
Obviously, we have $\res_\II(\tilde{m})=m$.
For the corresponding simple $\tqlp$-module $\tL(\tilde{m})$, we can apply Theorem \ref{thm:H-conj-region-Q} to get
\begin{equation} \label{eq:KL-large-n}
\ev_{t=1}\chi_{q,t}(\tL(\tilde{m})) = \chi_q(\tL(\tilde{m})).
\end{equation} 
Then, we obtain
\begin{align*}
\ev_{t=1}\chi_{q,t}(L(m)) &= \ev_{t=1}\frz^{\tfg}_{\fg}(\chi_{q,t}(\tL(\tilde{m}))) && \text{by Theorem \ref{thm:frz-chiqt} with $\res_{\II}(\tilde{m})=m$,} \\
&= \frz^{\tfg}_{\fg}(\ev_{t=1} \chi_{q,t}(\tL(\tilde{m}))) && \text{by the commutativity of \eqref{eq:frzg-ev},} \\  
&= \frz^{\tfg}_{\fg}(\chi_q(\tL(\tilde{m})) && \text{by the equality \eqref{eq:KL-large-n},} \\
&= \chi_q(L(m)) && \text{by Proposition \ref{prop:frz-chiq} with $\res_\II(\tilde{m})=m$,}
\end{align*}
which verifies the desired equality \eqref{eq:goal}.

\subsection{Proof of Proposition \ref{Prop:frz-F_t}}\label{Ssec:tFM}
We have to recall how to construct the element $F_t(m) \in K_t(\Cc_\Z)$ in Theorem \ref{Thm:F_t}. 
First, we prepare some terminologies. 
For each $i \in \II$, we say that an element $m$ of $\cM$ is $i$-dominant if it does not contain $Y_{i,p}^{-1}$ for any $p \in \Z$ as its non-trivial factor. 
Let $\cM_{i,+} \subset \cM$ denote the set of $i$-dominant monomials. Note that we have $\cM_+ = \bigcap_{i \in \II}\cM_{i,+}$ by definition.
For any $m \in \cM_{i,+}$, there is a unique $m$-pointed element $F_{i,t}(m) \in K_{i,t}$ (recall the definition of $K_{i,t}$ from \S\ref{Ssec:QGR}) such that $F_{i,t}(m)=my$ with $y \in \Z[t^{\pm 1}][A_{i,p}^{-1} \mid p \in \Z]$ and $m$ is the unique $i$-dominant monomial occurring in $F_{i,t}(m)$.  
See \cite[Proposition 4.12]{Her} for the construction of $F_{i,t}(m)$. We denote by $[F_{i,t}(m)]_{m'} \in \Z[t^{\pm 1}]$ the coefficient of $m' \in \cM$ in $F_{i,t}(m)$.
Note that we have $[F_{i,t}(m)]_{m'} \neq 0$ only if $m' = mA_{i,p_1}^{-1}A_{i,p_2}^{-1} \cdots A_{i,p_k}^{-1}$ for some $k \in \Z_{\ge 0}$ and $p_1,p_2,\ldots,p_k \in \Z$.

Given a dominant monomial $m \in \cM_+$,  we consider the set $D(m)$ of Laurent monomials $m' \in \cM$ satisfying the following property: there is a finite sequence $m=m_0, m_1, \ldots, m_l = m'$ in $\cM$ such that, for each $k \in [1, l]$, there is $k' \in [0, k-1]$ such that $m_{k'}$ is $i$-dominant and $[F_{i,t}(m_{k'})]_{m_k} \neq 0$ for some $i \in \II$.  
We fix a total ordering $D(m) = \{ m=m_0, m_1, m_2, \ldots\}$ compatible with the Nakajima partial ordering, i.e., so that $m_k > m_l$ implies $k<l$. 
Then, we recursively define the sequences $(s(m_k))_{k \ge 0}$ and $(s_{i}(m_k))_{k \ge 0}$ for $i \in \II$ in $\Z[t^{\pm 1}]$ by the following procedure (a $t$-analog of Frenkel--Mukhin algorithm): 
\begin{itemize}
\item for $k=0$, we set $s(m_0)\seq 1$ and $s_{i}(m_0) \seq 0$ for all $i \in \II$;
\item having constructed up to $k$-th terms $(s(m_u))_{u \in  [0,k]}$ and $(s_i(m_k))_{u \in [0,k]}$ for all $i \in \II$, we set
\[ s_i(m_{k+1}) \seq \sum_{u \in[0,k], m_u \in \cM_{i,+}}\big(s(m_u)-s_i(m_u)\big)[F_{i,t}(m_u)]_{m_{k+1}} \qquad \text{for each $i \in \II$},\]
and then we set 
\[ s(m_{k+1}) \seq \begin{cases}
s_i(m_{k+1}) & \text{if $m_{k+1} \not \in \cM_{i,+}$ for some $i \in \II$}, \\
0 & \text{if $m_{k+1} \in \cM_+$},
\end{cases}\]
which is well-defined since we know that $m_{k+1} \not \in \cM_{i,+} \cup \cM_{j,+}$ implies $s_{i}(m_{k+1}) = s_{j}(m_{k+1})$ by \cite[Lemma 5.22]{Her}.
\end{itemize}
Then, the element $F_t(m) \seq \sum_{k \ge 0} s(m_k) m_k$ is shown to satisfy the desired property in Theorem \ref{Thm:F_t} by \cite[Lemma 5.25]{Her}.
The resulting element $F_t(m)$ does not depend on the choice of total ordering of $D(m)$.

Now we give a proof of Proposition \ref{Prop:frz-F_t}.
We retain the assumption of \S\ref{Ssec:frz}. 
In particular, the set $\II$ is included in the set $\tII$ for a larger Lie algebra $\tfg$. 
Given a dominant monomial $m \in \tcM_+$ for $\tfg$, we consider the $\tfg$-analog $\tilde{D}(m) \subset \tcM$ of the set $D(m) \subset \cM$ as above, choose a total ordering $\tilde{D}(m) = \{ m=\tilde{m}_0, \tilde{m}_1, \tilde{m}_2, \ldots\}$ compatible with the Nakajima ordering, and define the sequences $(\tilde{s}(\tilde{m}_k))_{k \ge 0}$ and $(\tilde{s}_i(\tilde{m}_k))_{k \ge 0}$ for $i \in \tII$ in $\Z[t^{\pm 1}]$ by the $\tfg$-analog of the above recursive algorithm.  
Then, we have $\tilde{F}_t(m) = \sum_{k \ge 0}\tilde{s}(\tilde{m}_k)\tilde{m}_k$. 
The freezing operator sends it to 
\[ \frz^{\tfg}_{\fg}(\tilde{F}_t(m)) = \sum_{k \ge 0, \tilde{m}_k \le_\II m} \tilde{s}(\tilde{m}_k)\res_{\II}(\tilde{m}_k) = \sum_{k \ge 0} \tilde{s}(\tilde{m}_{\nu(k)}) \res_\II(\tilde{m}_{\nu(k)}),\]
where $\nu \colon \Z_{\ge 0} \to \Z_{\ge 0}$ is the unique increasing function satisfying $\nu(0)=0$ and $\{ \tilde{m}_{\nu(k)} \mid k\ge 0\} = \{m' \in \tilde{D}(m) \mid m' \le_\II m \}$.
Observe that the set $\{ m_k \seq \res_\II(\tilde{m}_{\nu(k)}) \mid k\ge 0\} \subset \cM$ coincides with the set $D(\res_\II(m))$.
Moreover, $D(\res_\II(m)) = \{ \res_\II(m) = m_0, m_1, m_2, \ldots \}$ is a total ordering compatible with the Nakajima ordering. 
Therefore, it is enough to show that we have $\tilde{s}(\tilde{m}_{\nu(k)}) = s(m_k)$ for any $k \ge 0$. 
We prove it by induction on $k$ together with the equality $\tilde{s}_i(\tilde{m}_{\nu(k)}) = s_i(m_k)$ for all $i \in \II$.
For $k=0$, they are clear. 
Assume that we have verified the equalities for up to $k$-th terms. 
Then, for each $i \in \II$, the algorithm yields
\begin{align*} \tilde{s}_i(\tilde{m}_{\nu(k+1)}) &= \sum_{u\in [0,\nu(k+1)-1], \tilde{m}_u \in \tilde{\cM}_{i,+}}\big(\tilde{s}(\tilde{m}_u)-\tilde{s}_i(\tilde{m}_u)\big)[\tilde{F}_{i,t}(\tilde{m}_u)]_{\tilde{m}_{\nu(k+1)}} \\
&= \sum_{u\in [0,k], \tilde{m}_{\nu(u)} \in \tilde{\cM}_{i,+}}\big(\tilde{s}(\tilde{m}_{\nu(u)})-\tilde{s}_i(\tilde{m}_{\nu(u)})\big)[\tilde{F}_{i,t}(\tilde{m}_{\nu(u)})]_{\tilde{m}_{\nu(k+1)}} \\
&= \sum_{u\in [0,k], m_{\nu(u)} \in \cM_{i,+}}\big(s(m_u)-s_i(m_u)\big)[F_{i,t}(m_{u})]_{m_{k+1}} \\
&= s_i(m_{k+1}),
\end{align*}
where the second equality follows because we have $[\tilde{F}_{i,t}(\tilde{m}_u)]_{\tilde{m}_{\nu(k+1)}} = 0$ if $\tilde{m}_u \not \le_\II m$ by Lemma \ref{Lem:NakordI} (1), the third equality follows by the induction hypothesis and by the construction of $F_{i,t}(m_u)$.
Note that the condition $\tilde{m}_{\nu(k+1)} \in \tcM_{+}$ is equivalent to the condition $\tilde{m}_{\nu(k+1)} \in \bigcap_{i \in \II}\tcM_{i,+}$ because $\tilde{m}_{\nu(k+1)} \in \bigcap_{i \in \tII\setminus \II}\tcM_{i,+}$ is automatically satisfied under the assumption $\tilde{m}_{\nu(k+1)} \le_\II m$. 
Moreover, it is also equivalent to $m_{k+1}=\res_\II(\tilde{m}_{\nu(k+1)}) \in \cM_+$. Therefore, we have
\begin{align*}
\tilde{s}(\tilde{m}_{\nu(k+1)}) &= \begin{cases}
\tilde{s}_i(\tilde{m}_{\nu(k+1)}) & \text{if $\tilde{m}_{\nu(k+1)} \not\in \tcM_{i,+}$ for some $i \in \II$}, \\
0 & \text{otherwise}
\end{cases}\\
&= \begin{cases}
s_i(m_{k+1}) & \text{if $m_{k+1} \not\in \cM_{i,+}$ for some $i \in \II$}, \\
0 & \text{if $m_{k+1} \in \cM_+$}
\end{cases}\\
&= s(m_{k+1}).
\end{align*}
Thus, the induction works and  we obtain the desired equality 
\[\frz^{\tfg}_{\fg}(\tilde{F}_t(m)) = \sum_{k \ge 0} \tilde{s}(\tilde{m}_{\nu(k)}) \res_\II(\tilde{m}_{\nu(k)}) = \sum_{k \ge 0} s(m_k) m_k = F_t(\res_\II(m)).  \]
The proof is completed.

\section{Application to twisted quantum loop algebras} \label{Sec:tw}

In this section, we discuss another application of the freezing operator to the finite-dimensional representation theory of \emph{twisted} quantum loop algebras. 
After fixing some notations, we recall in \S\ref{Ssec:twqchar} the definition of twisted $q$-characters (a twisted analog of $q$-characters) introduced by Hernandez in \cite{Her10}. 
Although a twisted analog of $(q,t)$-characters has not been constructed yet in the literature, we have instead the \emph{folding homomorphism}, relating the Grothendieck rings for untwisted quantum loop algebras with those for twisted ones, again due to Hernandez \cite{Her10}, as we recall in \S\ref{Ssec:folding}.
We conjecture that the folding homomorphism gives bijective correspondence between the simple isomorphism classes.
As the second main result of the present paper, we verify the conjecture for doubly-twisted quantum loop algebras of type $\mathrm{AD}$ in \S\ref{Ssec:prf-tw-main} by applying the freezing operators.
This result is new for type $\mathrm{D}$. (It has already been known for type $\mathrm{A}$ by \cite{KKKOsw3} with a different method.)    

\subsection{Twisted quantum loop algebras} \label{Ssec:tw-setup}
Throughout this section, let $\fg$ be a complex finite-dimensional simple Lie algebra of simply-laced type, i.e., of type $\mathrm{X}_n$ with $\mathrm{X} \in \{ \mathrm{A, D, E}\}$, and $C=(c_{ij})_{i,j \in \II}$ the Cartan matrix of $\fg$.
Let $\sigma$ denote a non-trivial automorphism of the Dynkin diagram of $\fg$, i.e., a non-trivial bijection $\sigma \colon \II \to \II$ satisfying $c_{\sigma(i)\sigma(j)} = c_{ij}$ for any $i, j \in \II$.
We denote by $r$ the order of $\sigma$.
Such an automorphism $\sigma$ is one of the following $5$ cases. Here we identify the set $\II$ with a certain finite subset of $\Z$ and the action of $\sigma$ is depicted by the blue arrows.
Note that $\sigma$ is uniquely determined by the type $\mathrm{X}_n$ and the order $r$. 
\begin{itemize}
\item
$(\mathrm{X}_n,r) = (\mathrm{A}_{2l}, 2)$ with $l \in \Z_{>0}$:
\begin{center}
\begin{tikzpicture}
\tikzset{dynkdot/.style={circle,draw,scale=.38}}
\def\Aoffset{2}
\node[dynkdot,label={below:{$-l$}}]  (A-l) at (1,\Aoffset){};
\node[dynkdot,label={below:{$-l+1$}}]  (A-l+1) at (2,\Aoffset){};
\node (A-dots) at (3, \Aoffset) {$\cdots$};
\node[dynkdot,label={below:{$-1$}}]  (A-1) at (4,\Aoffset){};
\node[dynkdot,label={below:{$+1$}}]  (A+1) at (5,\Aoffset){};
\node (A+dots) at (6, \Aoffset) {$\cdots$};
\node[dynkdot,label={below:{$l-1$}}]  (A+l-1) at (7,\Aoffset){};
\node[dynkdot,label={below:{$l$}}]  (A+l) at (8,\Aoffset){};
\draw[-] (A-l)--(A-l+1)--(A-dots)--(A-1)--(A+1)--(A+dots)--(A+l-1)--(A+l);
\coordinate (A0) at (4.5,\Aoffset); 
\draw[latex-latex, blue] (A-1) to [out=60, in=120] (A+1);
\draw[latex-latex, blue] (A-l+1) to [out=60, in=120] (A+l-1);
\draw[latex-latex, blue] (A-l) to [out=60, in=120] (A+l);
\end{tikzpicture}
\end{center}
\item $(\mathrm{X}_n,r) = (\mathrm{A}_{2l+1},2)$ with $l \in \Z_{>0}$:
\begin{center}
\begin{tikzpicture}
\tikzset{dynkdot/.style={circle,draw,scale=.38}}
\def\Aoffset{2}
\node[dynkdot,label={below:{$-l$}}]  (A-l) at (1,\Aoffset){};
\node[dynkdot,label={below:{$-l+1$}}]  (A-l+1) at (2,\Aoffset){};
\node (A-dots) at (3, \Aoffset) {$\cdots$};
\node[dynkdot,label={below:{$-1$}}]  (A-1) at (4,\Aoffset){};
\node[dynkdot,label={below:{$0$}}]  (A0) at (5,\Aoffset){};
\node[dynkdot,label={below:{$+1$}}]  (A+1) at (6,\Aoffset){};
\node (A+dots) at (7, \Aoffset) {$\cdots$};
\node[dynkdot,label={below:{$l-1$}}]  (A+l-1) at (8,\Aoffset){};
\node[dynkdot,label={below:{$l$}}]  (A+l) at (9,\Aoffset){};
\draw[-] (A-l)--(A-l+1)--(A-dots)--(A-1)--(A0)--(A+1)--(A+dots)--(A+l-1)--(A+l);
\draw[latex-latex, blue] (A-1) to [out=60, in=120] (A+1);
\draw[latex-latex, blue] (A-l+1) to [out=60, in=120] (A+l-1);
\draw[latex-latex, blue] (A-l) to [out=60, in=120] (A+l);
\end{tikzpicture}
\end{center}
\item $(\mathrm{X}_n,r) = (\mathrm{D}_{n},2)$ with $n \in \Z_{\ge 4}$:
\begin{center}
\begin{tikzpicture}
\tikzset{dynkdot/.style={circle,draw,scale=.38}}
\def\Doffset{-1.5}
\node[dynkdot,label={above:{$1$}}]  (D1) at (1,\Doffset+0.5){};
\node[dynkdot,label={below:{$2$}}]  (D2) at (1,\Doffset-0.5){};
\node[dynkdot,label={above:{$3$}}]  (D3) at (2,\Doffset){};
\node[dynkdot,label={above:{$4$}}]  (D4) at (3,\Doffset){};
\node (Ddots) at (4, \Doffset) {$\cdots$};
\node[dynkdot,label={above:{$n-1$}}]  (Dn-1) at (5,\Doffset){};
\node[dynkdot,label={above:{$n$}}]  (Dn) at (6,\Doffset){};
\draw[-] (D1) -- (D3) -- (D4) -- (Ddots) -- (Dn-1) -- (Dn) (D2)--(D3);
\draw[latex-latex, blue] (D1) to [out=240, in=120] (D2);
\end{tikzpicture}
\end{center}
\item $(\mathrm{X}_n,r) = (\mathrm{D}_4,3)$:
\begin{center}
\begin{tikzpicture}
\tikzset{dynkdot/.style={circle,draw,scale=.38}}
\node[dynkdot,label={right:{$1$}}]  (D1) at (0:1){};
\node[dynkdot,label={above right:{$2$}}]  (D2) at (0,0){};
\node[dynkdot,label={above left:{$3$}}]  (D3) at (120:1){};
\node[dynkdot,label={below left:{$4$}}]  (D4) at (240:1){};
\draw[-] (D2)--(D1) (D2)--(D3) (D2)--(D4);
\draw[-latex, blue] (D1) to [out=90, in=30] (D3);
\draw[-latex, blue] (D3) to[out=210, in=150] (D4);
\draw[-latex, blue] (D4) to[out=330, in=270] (D1);
\end{tikzpicture}
\end{center}
\item $(\mathrm{X}_n,r) = (\mathrm{E}_6,2)$:
\begin{center}
\begin{tikzpicture}
\tikzset{dynkdot/.style={circle,draw,scale=.38}}
\node[dynkdot,label={above:{$1$}}]  (E1) at (-2,0){};
\node[dynkdot,label={above:{$2$}}]  (E2) at (-1,0){};
\node[dynkdot,label={above left:{$3$}}]  (E3) at (0,0){};
\node[dynkdot,label={above:{$4$}}]  (E4) at (1,0){};
\node[dynkdot,label={above:{$5$}}]  (E5) at (2,0){};
\node[dynkdot,label={left:{$6$}}]  (E6) at (0,1){};
\draw[-] (E1)--(E2)--(E3)--(E4)--(E5) (E3)--(E6);
\draw[latex-latex, blue] (E1) to [out=300, in=240] (E5);
\draw[latex-latex, blue] (E2) to[out=300, in=240] (E4);
\end{tikzpicture}
\end{center}
\end{itemize}
Note that $r \in \{2,3\}$. 
We fix a primitive $r$-th root of unity $\omega$ in $\C$.
 
Associated with the above $\fg$ and $\sigma$, the twisted quantum loop algebra $\qlps$ is defined.
We follow the convention of \cite{Her10}.
This is a Hopf algebra over $\C$, defined as a subquotient of the Drinfeld--Jimbo quantized enveloping algebra $U_q(\widehat{\fg}^\sigma)$ of the twisted affine Kac--Moody algebra $\widehat{\fg}^\sigma$ of type $\rX_n^{(r)}$ in the Kac classification \cite{Kac90}.
It has Drinfeld type generators $\{k_{i}^{\pm 1}, h_{i,m}, x^{\pm}_{i,l} \mid i\in \II, l,m \in \Z, m\neq 0\}$ subject to
\begin{equation}\label{eq:tw-Drinfeld}
k_{\sigma(i)}^{\pm 1} = k_{i}^{\pm 1}, \quad h_{\sigma(i),m} = \omega^m h_{i,m}, \quad x^{\pm}_{\sigma(i),l} = \omega^l x^{\pm}_{i,l} 
\end{equation}
among other relations.
We have a large commutative subalgebra $U_q(\mathcal{L}\fh^\sigma)$ of $\qlps$ generated by $\{k_{i}^{\pm 1}, h_{i,m}\mid i\in \II, m \in \Z \setminus \{0\}\}$, which we call the loop Cartan subalgebra.

\subsection{Finite-dimensional simple modules}
We say that a $\qlps$-module is of type $\mathbf{1}$, if the action of the element $k_i$ is diagonalizable and all its eigenvalues are some integer powers of $q$ for all $i \in \II$.
Let $\Cc^\sigma$ denote the category of finite-dimensional $\qlps$-modules of type $\mathbf{1}$.
It carries the structure of $\C$-linear rigid monoidal abelian  category. 

Analogously to the untwisted case, 
there is a highest weight type classification of simple modules in $\Cc^\sigma$ due to Chari--Pressley \cite{CPtwisted} again in terms of the Drinfeld polynomials.
Let $\Pi \seq (1 + z\C[z])^{\II}$ as before, and consider the action of $\langle \sigma \rangle \cong \Z/r\Z$ on $\Pi$ given by $\sigma \cdot \pi \seq (\pi_{\sigma^{-1}(i)}(\omega z) )_{i \in \II}$
for $\pi = (\pi_i(z))_{i \in \II} \in \Pi$.
Let $\Pi^\sigma \subset \Pi$ denote the subset of $\sigma$-invariant Drinfeld polynomials, i.e., the elements $\pi = (\pi_i(z))_{i \in \II} \in \Pi$ satisfying $\pi_{\sigma(i)}(z) = \pi_i(\omega z)$ for all $i \in \II$.
The classification theorem tells us that the set of the simple isomorphism classes of $\Cc^\sigma$ is in bijection with the set $\Pi^\sigma$.
A simple module $L^\sigma(\pi)$ corresponding to $\pi \in \Pi^\sigma$ is characterized by the existence of the $\ell$-highest weight vector  $v \in \Ls(\pi)$ satisfying the same equations as in \eqref{eq:lhw} (with $q_i=q$ for all $i \in \II$). 
 Note that the relation \eqref{eq:tw-Drinfeld} forces $\pi$ to be $\sigma$-invariant.

 \subsection{Twisted  $q$-characters} \label{Ssec:twqchar}
As a twisted analog of Frenkel--Reshetikhin's $q$-character, Hernandez \cite{Her10} 
introduced the twisted $q$-character $\chi_q^\sigma(M)$ for each object $M \in \Cc^\sigma$, which is a Laurent polynomial in the variables $Y_{i,a}$, $(i,a) \in \II \times \C^\times$, encoding the spectral decomposition of $M$ with respect to the action of $U_q(\mathcal{L}\fh^\sigma)$.
Similarly to the untwisted case, it is shown in \cite[Theorem 3.2]{Her10} that simultaneous eigenvalues of the actions of mutually commuting elements $k_i$ and $h_{i,m}$, $i \in \II, m \in \Z\setminus \{0\}$, are respectively of the form $q^{\lambda_i-\mu_i}$ and $\frac{q^m-q^{-m}}{m(q-q^{-1})}(\sum_{r=1}^{\lambda_i} a_{i,r}^m-\sum_{s=1}^{\mu_i} b_{i,s}^m)$
for some $\lambda_i, \mu_i \in \Z_{\ge 0}$ and $a_{i,r}, b_{i,s}\in \C^\times$.
Then, the coefficient in $\chi_q(M)$ of the Laurent monomial $\prod_{i \in \II}(\prod_{r =1}^{\lambda_i}Y_{i, a_{i,r}} \prod_{s=1}^{\mu_i}Y^{-1}_{i,b_{i,s}})$ is defined to be the dimension of the corresponding simultaneous generalized eigenspace in $M$.

Let $(\II \times \C^\times)/\sigma$ denote the set of orbits with respect to the action of $\langle \sigma \rangle \cong \Z/r\Z$ on $\II \times \C^\times$ given by 
\[ \sigma \cdot (i,a) \seq (\sigma(i), \omega a)\]
for $i \in \II$ and $a \in \C^\times$.
We write $[i,a]$ for the orbit of $(i,a) \in \II \times \C^\times$. 
Note that the relation \eqref{eq:tw-Drinfeld} forces that the $q$-character $\chi_q(M)$ is a Laurent polynomial in the variables
\begin{equation} \label{eq:Z-variable}
Y_{[i,a]} \seq \prod_{k = 1}^r Y_{\sigma^k(i), \omega^ka} \qquad (i \in \II, a \in \C^\times).
\end{equation}
As the notation suggests, the element $Y_{[i,a]}$ only depends on the orbit $[i,a]$ of $(i,a)$, i.e., we have $Y_{[i,a]} = Y_{[j,b]}$ if $[i,a] = [j,b]$.
Therefore, the assignment $[i,a] \mapsto Y_{[i,a]}$ is well-defined, 
and the set $\{ Y_{[i,a]} \mid [i,a] \in (\II \times \C^\times)/\sigma\}$ is an algebraically independent subset of $\Z[Y_{i,a}^{\pm 1} \mid (i,a) \in \II \times \C^\times]$. 
Elements in the subring $\Z[Y_{[i,a]}^{\pm 1} \mid [i,a] \in (\II \times \C^\times)/\sigma]$ are said to be $\sigma$-invariant.

Recall from \S\ref{Ssec:qchar} the natural isomorphism of multiplicative monoids between the set $\Pi$ and the set of dominant monomials in the variables $Y_{i,a}$. 
Under the isomorphism, the subset $\Pi^\sigma \subset \Pi$ corresponds to the monomials in the variables $Y_{[i,a]}$, i.e., the $\sigma$-invariant dominant monomials. 
In what follows, for a $\sigma$-invariant dominant monomial $m$, we write $L^\sigma(m)$ for the simple module $L^\sigma(\pi)$ in the previous subsection when $m$ corresponds to $\pi \in \Pi^\sigma$. 
By definition, the monomial $m$ occurs in the twisted $q$-character $\chi_q^\sigma(L^\sigma(m))$ with multiplicity one.
The simple modules of the form $L^\sigma(Y_{[i,a]})$ for some $[i,a] \in (\II \times \C^\times)/\sigma$ are called fundamental modules.

\begin{rem} \label{rem:Z}
Our definition of the twisted $q$-characters is slightly different from that of \cite{Her10} as Laurent polynomials in the variables $Y_{i,a}$. 
In fact, our twisted $q$-character  is obtained from the twisted $q$-character in \cite{Her10} by replacing $Y_{i,a}$ with $\prod_{a' \in \C^\times, (a')^{r_i} = a} Y_{i,a'}$ for each $i \in \II$ and $a \in \C^\times$, where $r_i$ is the order of the stabilizer of $i$ in $\langle \sigma \rangle$.
In \cite{Her10}, it was shown that the twisted $q$-characters can be expressed as Laurent polynomials in another kind of variables $Z_{i,a}$. 
By definition, the variable $Z_{i,a}$ in \cite{Her10} equals $Y_{i,a}$ if $\sigma(i)=i$ (which is different from our $Y_{[i,a]}$) and it equals $\prod_{k = 1}^r Y_{\sigma^k(i), \omega^k a}$ if $\sigma(i) \neq i$ (which coincides with our $Y_{[i,a]}$).
As a result, our twisted $q$-characters obtained from those of \cite{Her10} simply by replacing $Z_{i,a}$ with $Y_{[i,a]}$ for all $(i,a) \in \II \times \C^\times$.
An advantage of our convention is that it enables us to describe the folding homomorphism (explained below) in a uniform way.  
\end{rem}

Let $ K(\Cc^\sigma)$ denote the Grothendieck ring of the abelian monoidal category $\Cc^\sigma$.

\begin{prop}[{\cite[\S3.2]{Her10}}]
The assignment $[M] \mapsto \chi_q^\sigma(M)$ gives rise to an injective ring homomorphism
\[ \chi_q^\sigma \colon K(\Cc^\sigma) \hookrightarrow \Z[Y_{[i,a]}^{\pm 1} \mid [i,a] \in (\II \times \C^\times)/\sigma] \subset \Z[Y_{i,a}^{\pm 1} \mid (i,a) \in \II \times \C^\times].\]
In particular, the ring $K(\Cc^\sigma)$ is commutative.
As a commutative ring, $K(\Cc^\sigma) $ is freely generated by the classes of fundamental modules.
\end{prop}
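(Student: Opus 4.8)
The plan is to follow the proof of the untwisted statement (the proposition of \cite{FR} recalled in \S\ref{Ssec:qchar}) essentially verbatim. That proof rests on exactly two non-formal facts, and I would first isolate their twisted counterparts, both supplied by Hernandez \cite{Her10}: (i) that $\chi_q^\sigma$ is multiplicative under tensor product, and (ii) that for every $\sigma$-invariant dominant monomial $m$ the twisted $q$-character $\chi_q^\sigma(L^\sigma(m))$ equals $m$ plus a $\Z_{\ge 0}$-linear combination of monomials strictly below $m$ in the Nakajima order; in particular $m$ is the unique highest monomial occurring, with coefficient one.

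Granting (i) and (ii), I would argue formally as follows. The assignment $[M]\mapsto\chi_q^\sigma(M)$ is a well-defined additive map $K(\Cc^\sigma)\to\Z[Y_{i,a}^{\pm 1}]$, since the coefficient of a monomial in $\chi_q^\sigma(M)$ is the dimension of a generalized simultaneous eigenspace of the commuting operators $k_i,h_{i,m}$ and such dimensions are additive on short exact sequences; by (i) it is a ring homomorphism. Its image lies in the $\sigma$-invariant subring $\Z[Y_{[i,a]}^{\pm 1}]$ for the reason already recorded in \S\ref{Ssec:twqchar}: the relations \eqref{eq:tw-Drinfeld} force every pseudo-eigenvalue to be a product of the variables $Y_{[i,a]}$ of \eqref{eq:Z-variable}. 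For injectivity I would use that the classes $[L^\sigma(m)]$, with $m$ ranging over the $\sigma$-invariant dominant monomials, form a $\Z$-basis of $K(\Cc^\sigma)$ (all objects have finite length and the simples are classified by $\Pi^\sigma$, \cite{CPtwisted}); by (ii) a leading-term argument gives $\Z$-linear independence of the $\chi_q^\sigma(L^\sigma(m))$, hence injectivity of $\chi_q^\sigma$. Then $K(\Cc^\sigma)$ embeds into a commutative ring and is therefore commutative. For the last assertion, given $m=\prod_{[i,a]}Y_{[i,a]}^{u_{[i,a]}}$ I would set $M(m)\seq\bigotimes_{[i,a]}L^\sigma(Y_{[i,a]})^{\otimes u_{[i,a]}}$ (in any fixed order); by (i) and the compatibility of the Nakajima order with products, $\chi_q^\sigma(M(m))=m+(\text{lower terms})$, so $[M(m)]=[L^\sigma(m)]+\sum_{m'<m}(\cdots)\,[L^\sigma(m')]$, and hence the $[M(m)]$ also form a $\Z$-basis of $K(\Cc^\sigma)$. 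Since $[M(m)]=\prod_{[i,a]}[L^\sigma(Y_{[i,a]})]^{u_{[i,a]}}$, this says precisely that $K(\Cc^\sigma)$ is the polynomial ring on the classes of fundamental modules.

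The hard part is input (i); everything else is bookkeeping. As in the untwisted setting this amounts to understanding the coproduct on the loop Cartan $U_q(\mathcal{L}\fh^\sigma)$: one must show that the generating Cartan currents act on $M\otimes N$ triangularly with respect to the $\ell$-weight filtration, so that the pseudo-eigenvalue attached to $v\otimes w$ is the product of those attached to $v$ and to $w$, and then compare dimensions of generalized eigenspaces. This is the one step where the twisted structure of $\qlps$ — the relations \eqref{eq:tw-Drinfeld} together with the twisted Drinfeld coproduct — is used essentially, and it is exactly what \cite[Theorem 3.2, \S3.2]{Her10} provides; input (ii) similarly comes from the twisted $\ell$-highest weight theory there, paralleling Theorem~\ref{thm:FM}(2). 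So I expect no conceptual obstacle beyond transcribing those two results, with the tensor-product multiplicativity being the only place genuine representation theory of twisted quantum loop algebras enters.
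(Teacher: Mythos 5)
Your reconstruction is correct and matches the argument the paper is implicitly invoking: the proposition is stated with only the citation to \cite[\S3.2]{Her10}, and Hernandez's proof there proceeds exactly as you describe, transporting the Frenkel--Reshetikhin template (additivity on short exact sequences, multiplicativity of $\chi_q^\sigma$ on tensor products via the coproduct analysis of the loop Cartan subalgebra, unique-highest-monomial property of simple twisted $q$-characters, leading-term triangularity for injectivity and for free generation by fundamentals) to the twisted setting. You correctly identify that the only places where genuine twisted representation theory enters are the multiplicativity on tensor products and the highest-monomial property for fundamental modules, both supplied by \cite{Her10} (the latter also appearing in the paper as Proposition~\ref{prop:tw-domin}), with the rest being bookkeeping.
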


For each $(i,a) \in \II \times \C^\times$, we define
\[ A_{[i,a]} \seq Y_{[i,aq^{-1}]}Y_{[i,aq]} \prod_{j \in \II, c_{ji}=-1} Y_{[j,a]}^{-1}.\]
Again, as the notation suggests, the element $A_{[i,a]}$ only depends on the orbit $[i,a]$ of $(i,a)$, i.e., we have $A_{[i,a]} = A_{[j,b]}$ if $[i,a] = [j,b]$.
For $\sigma$-invariant Laurent monomials $m, m'$, we write $m \le^\sigma m'$ if $m'm^{-1}$ is a monomial in these $A_{[i,a]}$'s. 
This defines a partial ordering among $\sigma$-invariant Laurent monomials.

\begin{prop}[\cite{Her10}] \label{prop:tw-domin}
Let $m$ be a $\sigma$-invariant dominant monomial. If a $\sigma$-invariant Laurent monomial $m'$ occurs in $\chi_q^\sigma(L^\sigma(m)) - m$, we have $m' <^\sigma m$. 
\end{prop}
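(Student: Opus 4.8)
The plan is to adapt to the twisted setting the classical argument bounding the $q$-character of a simple module by that of a tensor product of fundamental modules (which, in the untwisted case, is Theorem~\ref{thm:FM}(2)); the only genuinely non-formal ingredient needed is Hernandez's explicit description of the twisted $q$-characters of fundamental modules in \cite{Her10}.

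First I would record two elementary properties of the ordering $\le^\sigma$. Since the $A_{[i,a]}$ are algebraically independent (written out in the $Y_{i,a}$-variables they are products over $\sigma$-orbits of the algebraically independent untwisted $A_{i,a}$'s), $\le^\sigma$ is a genuine partial ordering, and it is compatible with multiplication: if $m_1' \le^\sigma m_1$ and $m_2' \le^\sigma m_2$, then $m_1' m_2' \le^\sigma m_1 m_2$. In particular, if $\prod_k m_k' = \prod_k m_k$ with $m_k' \le^\sigma m_k$ for all $k$, then $m_k' = m_k$ for all $k$.

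Next I would write the given $\sigma$-invariant dominant monomial as a product of fundamental monomials, $m = \prod_{k=1}^{l} Y_{[i_k,a_k]}$, and set $M \seq L^\sigma(Y_{[i_1,a_1]}) \otimes \cdots \otimes L^\sigma(Y_{[i_l,a_l]})$, which lies in $\Cc^\sigma$ as the category is monoidal. By the standard theory of $\ell$-highest weight modules (cf.\ \cite{CPtwisted}), the tensor product of the $\ell$-highest weight vectors is an $\ell$-highest weight vector of $M$ of $\ell$-highest weight $m$; the $\qlps$-submodule it generates is therefore $\ell$-highest weight with head $L^\sigma(m)$, so $L^\sigma(m)$ is a composition factor of $M$. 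Since $\chi_q^\sigma$ is additive on short exact sequences and its values have non-negative coefficients (being dimensions of generalized $\ell$-weight spaces), the difference $\chi_q^\sigma(M)-\chi_q^\sigma(L^\sigma(m))$ has non-negative coefficients, so every Laurent monomial occurring in $\chi_q^\sigma(L^\sigma(m))$ also occurs in $\chi_q^\sigma(M) = \prod_{k=1}^{l}\chi_q^\sigma(L^\sigma(Y_{[i_k,a_k]}))$, the equality holding because $\chi_q^\sigma$ is a ring homomorphism. It then remains to use the key input that each monomial occurring in $\chi_q^\sigma(L^\sigma(Y_{[i,a]}))$ is $\le^\sigma Y_{[i,a]}$, i.e.\ $\chi_q^\sigma(L^\sigma(Y_{[i,a]})) = Y_{[i,a]}\cdot y$ with $y$ a polynomial in the $A_{[j,b]}^{-1}$'s; granting this, multiplicativity of $\le^\sigma$ shows every monomial of $\chi_q^\sigma(M)$ is $\le^\sigma \prod_k Y_{[i_k,a_k]} = m$, hence so is every monomial of $\chi_q^\sigma(L^\sigma(m))$, and since $m$ occurs there with multiplicity one and $\le^\sigma$ is antisymmetric, any other monomial $m'$ occurring there satisfies $m'<^\sigma m$.

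The hard part is precisely this key input about the fundamental modules: everything else in the argument is formal, but the triangularity of $\chi_q^\sigma(L^\sigma(Y_{[i,a]}))$ with respect to $\le^\sigma$ is where one must invoke genuine representation-theoretic information, namely Hernandez's explicit computation of the twisted $q$-characters of the Kirillov--Reshetikhin modules (in particular the fundamental ones) in \cite{Her10} --- or, equivalently, the twisted analog of the Frenkel--Mukhin description of the image of $\chi_q^\sigma$, also due to \cite{Her10}.
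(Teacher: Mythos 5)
Your proposal is correct and follows essentially the same approach as the paper: the paper's (very terse) proof simply asserts that it is enough to treat fundamental modules and then cites \cite[Lemma 4.14]{Her10} for that case, and you have filled in the standard reduction (every monomial of $\chi_q^\sigma(L^\sigma(m))$ occurs in the product $\prod_k \chi_q^\sigma(L^\sigma(Y_{[i_k,a_k]}))$ because $L^\sigma(m)$ is a composition factor of the corresponding tensor product, $\chi_q^\sigma$ is an additive ring homomorphism with non-negative coefficients, and $\le^\sigma$ is multiplicative) together with the same key input from \cite{Her10}. Your observation that $A_{[i,a]}$ is the product of the $A_{j,b}$ over the $\sigma$-orbit of $(i,a)$, hence that distinct $A_{[i,a]}$'s are algebraically independent so that $\le^\sigma$ is genuinely a partial order, is correct and worth recording.
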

\begin{proof}
It is enough to show the statement for fundamental modules.  
For this case, it is established in \cite[Lemma 4.14]{Her10}. 
See also Remark \ref{rem:Z} above.
\end{proof}

\subsection{Folding homomorphism} \label{Ssec:folding}
Consider the ring homomorphism
\[ \phi^\sigma_\fg \colon \Z[Y_{i,a}^{\pm 1} \mid (i,a) \in \II \times \C^\times ] \to \Z[Y_{[i,a]}^{\pm 1} \mid [i,a] \in (\II \times \C^\times)/\sigma] \]
given by $\phi^\sigma_\fg(Y_{i,a}) \seq  Y_{[i,a]}$ for each $(i,a) \in \II \times \C^\times$.
Clearly this is a surjection, but not an injection as $\phi^\sigma_\fg(Y_{i,a}) = \phi^\sigma_\fg(Y_{j,b})$ if (and only if) $[i,a] = [j,b]$.
Note that the map $\phi^\sigma_\fg$ sends a dominant monomial to a $\sigma$-invariant dominant monomial, and satisfies $\phi^\sigma_\fg(A_{i,a}) = A_{[i,a]}$ for each $(i,a) \in \II \times \C^\times$,
where $A_{i,a}$ is defined as in \eqref{eq:defA} for $\fg$ (of simply-laced type).

\begin{thm}[{\cite[Theorem 4.15]{Her10}}] \label{Thm:Her10}
There is a unique ring homomorphism 
\[\Phi^\sigma_\fg \colon K(\Cc) \to K(\Cc^\sigma)\]
making the following diagram commute:
\[ 
\xymatrix{ 
K(\Cc) \ar@{^(->}[r]^-{\chi_q} \ar[d]_-{\Phi^\sigma_\fg} & \Z[Y_{i,a}^{\pm 1} \mid (i,a) \in \II \times \C^\times ] \ar[d]^-{\phi^\sigma_\fg} \\
K(\Cc^\sigma) \ar@{^(->}[r]^-{\chi_q^\sigma} & \Z[Y_{[i,a]}^{\pm 1} \mid [i,a] \in (\II \times \C^\times)/\sigma ].
}
\]
Moreover, we have the equality
\begin{equation}\label{eq:folding}
 \Phi^\sigma_\fg[L(m)] = [L^\sigma(\phi_\fg^\sigma(m))], \quad \text{or equivalently,} \quad \phi^\sigma_\fg(\chi_q(L(m)))=
\chi_q^\sigma(L^\sigma(\phi^\sigma_\fg(m)))
\end{equation}
if $L(m)$ is a Kirillov--Reshetikhin module, i.e., if $m = \prod_{s=0}^k Y_{i,p+2s}$ for some $i \in \II$, $p \in \Z$ and $k \in \Z_{\ge 0}$.
\end{thm}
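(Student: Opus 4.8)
The plan is to split the statement into the existence and uniqueness of $\Phi^\sigma_\fg$ and the explicit identity \eqref{eq:folding} for Kirillov--Reshetikhin modules, and to derive the former from the latter at $k=0$. Uniqueness is immediate: if $\Phi$ and $\Phi'$ both make the square commute then $\chi_q^\sigma\circ\Phi=\phi^\sigma_\fg\circ\chi_q=\chi_q^\sigma\circ\Phi'$, so $\Phi=\Phi'$ by injectivity of $\chi_q^\sigma$. For existence, recall that $K(\Cc)$ is the polynomial ring on the fundamental classes $[L(Y_{i,a})]$, so there is a unique ring homomorphism $\Phi^\sigma_\fg\colon K(\Cc)\to K(\Cc^\sigma)$ with $\Phi^\sigma_\fg[L(Y_{i,a})]=[L^\sigma(Y_{[i,a]})]$; then $\chi_q^\sigma\circ\Phi^\sigma_\fg$ and $\phi^\sigma_\fg\circ\chi_q$ are ring homomorphisms out of $K(\Cc)$ that agree on the generators $[L(Y_{i,a})]$ as soon as $\phi^\sigma_\fg(\chi_q(L(Y_{i,a})))=\chi_q^\sigma(L^\sigma(Y_{[i,a]}))$, i.e.\ as soon as \eqref{eq:folding} holds for $k=0$. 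Hence the whole theorem comes down to \eqref{eq:folding} for all Kirillov--Reshetikhin modules $L(m)$; these, as well as their twisted analogues $L^\sigma(\phi^\sigma_\fg(m))$, are known to be \emph{special}, meaning that their (twisted) $q$-characters have a unique dominant monomial.

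For a special module $L(m)$ with $m$ a $\sigma$-invariant dominant monomial, I would prove \eqref{eq:folding} by a Frenkel--Mukhin argument paralleling \S\ref{Ssec:tFM}. On one side, $\chi_q(L(m))$ is the output of the Frenkel--Mukhin algorithm (Theorem \ref{thm:FM}) applied to $m$; on the other, $\chi_q^\sigma(L^\sigma(\phi^\sigma_\fg(m)))$ is the output of the twisted Frenkel--Mukhin algorithm of Hernandez \cite{Her10} applied to $\phi^\sigma_\fg(m)$ --- both algorithms being well-defined here precisely because the modules are special. Using $\phi^\sigma_\fg(Y_{i,a})=Y_{[i,a]}$, $\phi^\sigma_\fg(A_{i,a})=A_{[i,a]}$, the fact that $\chi_q(L(m))$ is $\sigma$-invariant (since $L(m)$ is isomorphic to its $\sigma$-twist), and Theorem \ref{thm:FM}(2), one shows that $\phi^\sigma_\fg$ intertwines the two algorithms step by step: a node $i$ acts through its $\langle\sigma\rangle$-orbit $[i]$, the $i$-dominant intermediate monomials are carried to $[i]$-dominant ones, and the elementary screening expansions correspond under $\phi^\sigma_\fg$. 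It follows that $\phi^\sigma_\fg(\chi_q(L(m)))=\chi_q^\sigma(L^\sigma(\phi^\sigma_\fg(m)))$, which is \eqref{eq:folding}; applying $(\chi_q^\sigma)^{-1}$ then lifts this to the identity of classes $\Phi^\sigma_\fg[L(m)]=[L^\sigma(\phi^\sigma_\fg(m))]$.

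The heart of the matter is thus the claim that $\phi^\sigma_\fg$ carries the untwisted screening data at a node $i$ to the twisted screening data at the orbit $[i]$ --- equivalently, that it maps $\chi_q(K(\Cc))$ into $\chi_q^\sigma(K(\Cc^\sigma))$. The identities $\phi^\sigma_\fg(Y_{i,a})=Y_{[i,a]}$, $\phi^\sigma_\fg(A_{i,a})=A_{[i,a]}$ do most of the bookkeeping, but the translation is not uniform over the five types of $(\fg,\sigma)$: at a node $i$ fixed by $\sigma$ that is adjacent to a $\sigma$-orbit of size $r$ (the branch nodes in types $\mathrm{D}$ and $\mathrm{E}_6$, and the case $(\mathrm{D}_4,3)$), and above all in the case $(\mathrm{A}_{2l},2)$, where the two central nodes are interchanged and the folding behaves exceptionally at that end of the diagram, the naive image of the screening operator has to be corrected, and one needs the explicit combinatorial description of the twisted screening operators from \cite{Her10}. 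This type-by-type verification is the step I expect to be the main obstacle, as it is exactly where the finer structure of twisted $q$-characters enters.

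Finally, for Kirillov--Reshetikhin modules one could alternatively bypass the general ``special'' argument and induct on $k$: the untwisted modules $L(\prod_{s=0}^{k}Y_{i,p+2s})$ satisfy the $T$-system relations in $K(\Cc)$, the twisted ones satisfy the twisted $T$-system in $K(\Cc^\sigma)$ (both from \cite{Her10}), and since $\Phi^\sigma_\fg$ is a ring homomorphism sending every Kirillov--Reshetikhin or fundamental class of smaller parameter appearing there to the corresponding twisted class, comparing the two $T$-systems and cancelling in the integral domain $K(\Cc^\sigma)$ propagates \eqref{eq:folding} from $k=0$ to all $k$. Either route still rests on the fundamental case $\phi^\sigma_\fg(\chi_q(L(Y_{i,a})))=\chi_q^\sigma(L^\sigma(Y_{[i,a]}))$, and for that the Frenkel--Mukhin comparison above --- or direct inspection of the explicit fundamental $q$-characters in classical type --- seems unavoidable.
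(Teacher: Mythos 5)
The paper does not prove Theorem~\ref{Thm:Her10}; it recalls the statement from \cite[Theorem~4.15]{Her10} and uses it as a black box. So there is no internal proof to compare against, and your sketch has to be judged as a reconstruction of the cited argument. Your overall plan is coherent and matches the shape of Hernandez's proof: reduce existence and uniqueness to \eqref{eq:folding} at $k=0$ via the polynomial-ring presentation of $K(\Cc)$ and the injectivity of $\chi_q^\sigma$, establish the fundamental case by comparing the untwisted and twisted Frenkel--Mukhin algorithms under $\phi^\sigma_\fg$, and propagate to all Kirillov--Reshetikhin modules via the two $T$-systems. You also correctly identify the real technical obstacle, namely the type-by-type translation of screening data through the folding map, with $(\mathrm{A}_{2l},2)$ --- where no node is $\sigma$-fixed and the folding behaves exceptionally at the centre of the diagram --- as the hardest case; this is precisely what occupies the bulk of the argument in \cite{Her10}.

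Two small inaccuracies in the sketch. First, the parenthetical ``equivalently'' overstates things: that $\phi^\sigma_\fg$ intertwines the screening data at $i$ with the twisted data at $[i]$ is a stronger structural statement than the inclusion $\phi^\sigma_\fg(\chi_q(K(\Cc)))\subset\chi_q^\sigma(K(\Cc^\sigma))$; the latter follows from the former (and also, a posteriori, from your route (B) once \eqref{eq:folding} is known on generators), but they are not equivalent as stated. Second, specialness of a module is what makes the Frenkel--Mukhin algorithm a natural candidate, but it does not by itself guarantee that the algorithm terminates and outputs the true $q$-character; for Kirillov--Reshetikhin modules (untwisted and twisted) this correctness is a nontrivial theorem of Hernandez, not a formal consequence of specialness, and should be cited rather than asserted. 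Finally, when you ``cancel in the integral domain $K(\Cc^\sigma)$'' in the $T$-system induction, it is worth spelling out that you are comparing two elements that satisfy the same linear relation with a common nonzero coefficient; since $K(\Cc^\sigma)$ is a polynomial ring over $\Z$, hence a domain, this determines the unknown uniquely. None of these affect the soundness of the plan, but filling them in is essentially a condensed rederivation of \cite{Her10}.
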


{
\begin{rem}\label{rem:fold}
One may expect that the equalities \eqref{eq:folding} hold for an arbitrary dominant monomial $m$.
However, it is not true unless the simple module $L(m)$ belongs to the skeletal subcategory $\Cc_\Z$, as noticed by Wang \cite[Example 2.21]{Wang}. 
See Conjecture \ref{Conj:tw} below.
\end{rem}
}

\subsection{Twisted analog of the skeletal subcategory} \label{Ssec:twHLcat}
Hereafter, as in \S\ref{Ssec:HLcat}, we set
\[ Y_{i,p}\seq Y_{i,q^p}, \qquad Y_{[i,p]} \seq Y_{[i,q^p]}, \qquad A_{i,p} \seq A_{i,q^p}, \qquad  A_{[i,p]} \seq A_{[i,q^p]}\]
for each $(i,p) \in \II \times \Z$.
Note that the map $\II \times \Z \ni (i,p) \mapsto [i,q^p] \in (\II \times \Z\C^\times)/\sigma$ is injective and hence the set $\{ Y_{[i,p]} \mid (i,p) \in \II \times \Z\}$ is algebraically independent.
Consider the rings of Laurent polynomials: 
\[ \cY \seq \Z[Y_{i,p}^{\pm 1} \mid i \in \II, p \in \Z ], \qquad  \cY^\sigma \seq \Z[Y_{[i,p]}^{\pm 1} \mid i \in \II, p \in \Z ].\]
Let  $\cM$ (resp.\ $\cM^\sigma$) denote the subset of $\cY$ (resp.\ $\cY^\sigma$) consisting of all the Laurent monomials in $Y_{i,p}$'s (resp.\ $Y_{[i,p]}$'s), and $\cM_+$ (resp.\ $\cM^\sigma_{+}$) its subset consisting of dominant monomials. 
Analogously to the case of $A_{i,p}$, the element $A_{[i,p]}$ belongs to $\cM^\sigma$ for each $(i,p) \in \II \times \Z$.
For $m, m' \in \cM^\sigma$, we have $m \le^\sigma m'$ if and only if $m'm^{-1}$ is a monomial in  these $A_{[i,p]}$'s. 

As the twisted analog of the category $\Cc_\Z$ in \S\ref{Ssec:HLcat}, 
we define the category $\Cc^\sigma_{\Z}$ to be the Serre subcategory of $\Cc^\sigma$ generated by the simple modules $L^\sigma(m)$ with $m \in \cM^\sigma_+$.
The category $\Cc^\sigma_\Z$ is closed under the tensor product (see the first paragraph of the proof of \cite[Theorem 4.15]{Her10}). 
Moreover, it can be thought of as a monoidal skeleton of the category $\Cc^\sigma$. In fact, any simple module in $\Cc^\sigma$ factorizes into a tensor product of some spectral parameter shifts of simple modules in $\Cc^\sigma_\Z$. 
(This follows from the fact that the category $\Cc^\sigma_\Z$ contains the Hernandez--Leclerc category in the sense of \cite[\S2.3]{KKOPblock}.) 

\begin{prop}[\cite{Her10}] \label{Prop:CZtw}
The twisted $q$-character map restricts to an injective ring homomorphism
\[ \chi_q^\sigma \colon K(\Cc^\sigma_\Z) \hookrightarrow \cY^\sigma.\]
\end{prop}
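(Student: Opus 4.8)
The plan is to reduce the statement to the case of fundamental modules and then bootstrap. The first two assertions are formal: since $\Cc^\sigma_\Z$ is a Serre subcategory of $\Cc^\sigma$ closed under tensor products, $K(\Cc^\sigma_\Z)$ is a subring of $K(\Cc^\sigma)$, and composing the inclusion with the injective ring homomorphism $\chi_q^\sigma \colon K(\Cc^\sigma) \hookrightarrow \Z[Y_{[i,a]}^{\pm 1} \mid [i,a] \in (\II \times \C^\times)/\sigma]$ of \cite[\S3.2]{Her10} yields an injective ring homomorphism on $K(\Cc^\sigma_\Z)$. So everything comes down to showing that its image lies in the subring $\cY^\sigma = \Z[Y_{[i,p]}^{\pm 1} \mid i \in \II, p \in \Z]$.

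I would first treat fundamental modules via the folding homomorphism. For $(i,p) \in \II \times \Z$, the monomial $Y_{i,p}$ is of Kirillov--Reshetikhin type (the case $k=0$ of Theorem \ref{Thm:Her10}) and $\phi^\sigma_\fg(Y_{i,p}) = Y_{[i,p]}$, so $[L^\sigma(Y_{[i,p]})] = \Phi^\sigma_\fg[L(Y_{i,p})]$ and therefore $\chi_q^\sigma(L^\sigma(Y_{[i,p]})) = \phi^\sigma_\fg(\chi_q(L(Y_{i,p})))$. Since $[L(Y_{i,p})] \in K(\Cc_\Z)$, Proposition \ref{prop:FM_CZ} gives $\chi_q(L(Y_{i,p})) \in \cY = \Z[Y_{j,s}^{\pm 1} \mid j \in \II, s \in \Z]$; as the ring homomorphism $\phi^\sigma_\fg$ sends each $Y_{j,q^s}$ to $Y_{[j,s]}$, it maps $\cY$ into $\cY^\sigma$, whence $\chi_q^\sigma(L^\sigma(Y_{[i,p]})) \in \cY^\sigma$.

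Next I would show that $K(\Cc^\sigma_\Z)$ is generated as a ring by the fundamental classes $\{[L^\sigma(Y_{[i,p]})] \mid (i,p) \in \II \times \Z\}$; together with the previous paragraph and the fact that $\chi_q^\sigma$ is a ring homomorphism and $\cY^\sigma$ a subring, this gives the conclusion. For the generation statement, to each $m = Y_{[i_1,p_1]} \cdots Y_{[i_l,p_l]} \in \cM^\sigma_+$ associate the standard module $E^\sigma(m) \seq L^\sigma(Y_{[i_1,p_1]}) \otimes \cdots \otimes L^\sigma(Y_{[i_l,p_l]})$, which lies in $\Cc^\sigma_\Z$. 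From Proposition \ref{prop:tw-domin} and the multiplicativity of the ordering $\le^\sigma$ one gets $\chi_q^\sigma(E^\sigma(m)) = m + \sum_{m' <^\sigma m}(\cdots)\,m'$; decomposing $[E^\sigma(m)]$ in the basis of simple classes $\{[L^\sigma(m')] \mid m' \in \cM^\sigma_+\}$ of $\Cc^\sigma_\Z$ and comparing top terms (again by Proposition \ref{prop:tw-domin}) then forces $[E^\sigma(m)] = [L^\sigma(m)] + \sum_{m' <^\sigma m,\ m' \in \cM^\sigma_+} d_{m'}\,[L^\sigma(m')]$ with $d_{m'} \in \Z$. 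A downward induction on $m$ with respect to $\le^\sigma$ — legitimate because $\{m' \in \cM^\sigma_+ \mid m' \le^\sigma m\}$ is finite, as in the untwisted case — then expresses every $[L^\sigma(m)]$ as an integer polynomial in the fundamental classes.

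The main obstacle is this last triangularity/finiteness step: one needs that $\le^\sigma$ restricts to a well-founded order on the dominant monomials below a given one and that the standard-module classes are unitriangular against the simple classes. Both are routine transcriptions of the known untwisted statements once Proposition \ref{prop:tw-domin} is in hand, but they require a little care. Alternatively, the folding input could be avoided altogether: one can establish $\chi_q^\sigma(L^\sigma(m)) \in \cY^\sigma$ for every $m \in \cM^\sigma_+$ directly from the twisted Frenkel--Mukhin-type characterization of the image of $\chi_q^\sigma$ in \cite{Her10}, whose defining screening operators are indexed by $\II \times \Z$ and hence preserve $\cY^\sigma$. I prefer the route above since it only invokes results already recorded here; note that neither route relies on Conjecture \ref{Conj:tw}.
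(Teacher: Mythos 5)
Your proposal is correct, but it should be compared against what the paper actually does: the paper's ``proof'' of Proposition~\ref{Prop:CZtw} is a one-line pointer, ``See again the first paragraph of the proof of \cite[Theorem~4.15]{Her10},'' so there is no in-text argument to match. What you have written is a self-contained reconstruction, and it is sound. Reducing to the claim that the image lies in $\cY^\sigma$ is immediate since $\chi_q^\sigma$ is already known to be injective on all of $K(\Cc^\sigma)$. Treating fundamental modules via Theorem~\ref{Thm:Her10} with $k=0$ is legitimate and non-circular: the existence of $\Phi^\sigma_\fg$ and the KR-module identity are established in \cite{Her10} on the full ring $\Z[Y_{i,a}^{\pm 1}]$, independently of any restriction to integer powers of $q$. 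Your generation step (standard modules $E^\sigma(m)$, unitriangularity against simples, downward induction) is the standard highest-weight-category argument; the two points you flag as needing care do go through, because $\phi^\sigma_\fg\colon\cY\to\cY^\sigma$ is a ring isomorphism matching $A_{i,p}$ with $A_{[i,p]}$, hence transports the well-foundedness of $\le$ on $\cM_+$ to $\le^\sigma$ on $\cM^\sigma_+$, and the multiplicities in a Grothendieck-group decomposition are nonnegative so no cancellation can hide a composition factor with $m'\not\le^\sigma m$.

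One small remark on your proposed alternative route: the ``screening operators preserve $\cY^\sigma$'' shortcut is stated loosely -- what one really needs is that the Frenkel--Mukhin-type algorithm for twisted $q$-characters, started from $m\in\cM^\sigma_+$, only ever produces monomials obtained from $m$ by multiplying $A_{[i,p]}^{-1}$ with $(i,p)\in\II\times\Z$, which keeps everything inside $\cY^\sigma$. That is true but requires invoking Hernandez's twisted algorithm, so your preferred route via the folding homomorphism is the cleaner one given what is recorded in this paper. Overall the proposal is correct and fills in precisely the kind of argument the paper is outsourcing to \cite{Her10}.
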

\begin{proof}
See again the first paragraph of the proof of \cite[Theorem 4.15]{Her10}.
\end{proof}

The folding homomorphism $\phi^\sigma_\fg$ restricts to the isomorphism $\cY \xrightarrow{\simeq} \cY^\sigma$, mapping $Y_{i,p}$ to $Y_{[i,p]}$ for each $(i,p) \in \II \times \Z$,
which we denote again by $\phi^\sigma_\fg$.
In particular, it induces the isomorphisms $\cM \cong \cM^\sigma$ and $\cM_+ \cong \cM^\sigma_+$, and satisfies
\begin{equation} \label{eq:foldA}
\phi^\sigma_\fg(A_{i,p}) = A_{[i,p]} \qquad \text{for each $(i,p) \in \II \times \Z$}.
\end{equation}
The following statement is a combination of Theorem \ref{Thm:Her10} and Proposition \ref{Prop:CZtw}.
\begin{prop}[cf.\ {\cite[Theorem 4.15]{Her10}}] \label{Prop:foldCZ}
The folding homomorphism $\Phi^\sigma_\fg$ restricts to the ring isomorphism
\[ \Phi^\sigma_\fg \colon K(\Cc_\Z) \xrightarrow{\simeq} K(\Cc^\sigma_\Z)\]
fitting into the following commutative diagram:
\[
\xymatrix{ 
K(\Cc_\Z) \ar@{^(->}[r]^-{\chi_q} \ar[d]_-{\Phi^\sigma_\fg}^-{\simeq} & \cY  \ar[d]^-{\phi^\sigma_\fg}_-{\simeq} \\
K(\Cc^\sigma_\Z) \ar@{^(->}[r]^-{\chi_q^\sigma} & \cY^\sigma.
}
\]
\end{prop}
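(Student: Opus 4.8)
The plan is to obtain everything by restricting the commutative square of Theorem~\ref{Thm:Her10} and exploiting the fact that $\Phi^\sigma_\fg$ behaves well on fundamental classes. The starting observation is that a fundamental module is a Kirillov--Reshetikhin module (the case $k=0$), so the equality \eqref{eq:folding} already gives $\Phi^\sigma_\fg[L(Y_{i,p})] = [L^\sigma(\phi^\sigma_\fg(Y_{i,p}))] = [L^\sigma(Y_{[i,p]})]$ for every $(i,p) \in \II \times \Z$; in particular $\Phi^\sigma_\fg$ sends the class of a fundamental module in $\Cc_\Z$ to the class of a fundamental module in $\Cc^\sigma_\Z$.

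Next I would identify $K(\Cc_\Z)$ and $K(\Cc^\sigma_\Z)$ with explicit polynomial rings. For $m \in \cM_+$ one has $\chi_q(L(m)) \in \cY$ by Proposition~\ref{prop:FM_CZ}, hence every dominant monomial occurring in $\chi_q(L(m))$ lies in $\cM_+$; combining this with Theorem~\ref{thm:FM}~(2), the multiplicativity of $\chi_q$, and the freeness of $K(\Cc)$ as a commutative ring over the fundamental classes (recalled in \S\ref{Ssec:qchar}), a straightforward downward induction on the Nakajima ordering expresses each $[L(m)]$ as a $\Z$-polynomial in the classes $[L(Y_{i,p})]$, $(i,p) \in \II \times \Z$; since the reverse containment is clear, $K(\Cc_\Z) = \Z[\,[L(Y_{i,p})] \mid (i,p) \in \II \times \Z\,]$, a polynomial ring. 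The same argument, now using Proposition~\ref{prop:tw-domin}, Proposition~\ref{Prop:CZtw}, and the corresponding freeness of $K(\Cc^\sigma)$ (recalled in \S\ref{Ssec:twqchar}), gives $K(\Cc^\sigma_\Z) = \Z[\,[L^\sigma(Y_{[i,p]})] \mid (i,p) \in \II \times \Z\,]$, again a polynomial ring.

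With these identifications, $\Phi^\sigma_\fg$ carries the free polynomial generators $[L(Y_{i,p})]$ of $K(\Cc_\Z)$ onto the free polynomial generators $[L^\sigma(Y_{[i,p]})]$ of $K(\Cc^\sigma_\Z)$; because the assignment $(i,p) \mapsto [i,q^p]$ is injective (\S\ref{Ssec:twHLcat}) and the $[L^\sigma(Y_{[i,a]})]$ are algebraically independent in $K(\Cc^\sigma)$, this is a bijection between free generating sets, so the restriction $\Phi^\sigma_\fg \colon K(\Cc_\Z) \to K(\Cc^\sigma_\Z)$ is a ring isomorphism. (Injectivity also follows from the square below, since $\chi_q$ is injective on $K(\Cc_\Z)$ and $\phi^\sigma_\fg$ is injective on $\cY$.) Finally, the asserted commutative square is the restriction of the square of Theorem~\ref{Thm:Her10}: the top $\chi_q$ lands in $\cY$ by Proposition~\ref{prop:FM_CZ}, the folding homomorphism $\phi^\sigma_\fg$ restricts to the isomorphism $\cY \xrightarrow{\simeq} \cY^\sigma$ (sending $Y_{i,p} \mapsto Y_{[i,p]}$, and $A_{i,p} \mapsto A_{[i,p]}$ by \eqref{eq:foldA}), and the bottom $\chi_q^\sigma$ lands in $\cY^\sigma$ by Proposition~\ref{Prop:CZtw}, so all four arrows restrict and commutativity is inherited. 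The one point requiring care is the polynomiality claim in the second paragraph --- namely that the unitriangular passage between products of fundamental classes and simple classes does not leave the subcategories $\Cc_\Z$, $\Cc^\sigma_\Z$ --- but this is precisely what Propositions~\ref{prop:FM_CZ} and \ref{Prop:CZtw} guarantee; once it is granted, the rest is bookkeeping.
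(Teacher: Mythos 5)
Your proof is correct and takes essentially the approach the paper indicates — the paper only remarks that Proposition~\ref{Prop:foldCZ} is ``a combination of Theorem~\ref{Thm:Her10} and Proposition~\ref{Prop:CZtw}'' without giving details, and your argument supplies exactly the missing bookkeeping: it uses the $k=0$ case of~\eqref{eq:folding} to see that $\Phi^\sigma_\fg$ matches fundamental classes, identifies $K(\Cc_\Z)$ and $K(\Cc^\sigma_\Z)$ as polynomial rings on those fundamental classes via the unitriangularity of Theorem~\ref{thm:FM}~(2) and Proposition~\ref{prop:tw-domin} (with Propositions~\ref{prop:FM_CZ} and~\ref{Prop:CZtw} guaranteeing the induction stays inside $\cM_+$ and $\cM^\sigma_+$), and then restricts the square of Theorem~\ref{Thm:Her10}.
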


{
\begin{conj}[{\cite[Conjecture 2.20]{Wang}}] \label{Conj:tw}
The isomorphism $\Phi^\sigma_\fg$ in Proposition \ref{Prop:foldCZ} induces a bijection between the simple isomorphism classes. 
In other words, the equalities \eqref{eq:folding} hold for any dominant monomial $m \in \cM_+$.
\end{conj}

Note that, if Conjecture \ref{Conj:tw} is true, it means that one can compute every simple twisted $q$-character from the simple (untwisted) $q$-characters. 
Thus, the computation problem of simple twisted $q$-characters reduces to that of simple (untwisted) $q$-characters.  

When $\fg$ is of type $\mathrm{A}$, Conjecture \ref{Conj:tw} was previously verified by Kang--Kashiwara--Kim--Oh in \cite{KKKOsw3} (see also \S\ref{Ssec:fold-qchar} below), and
we will prove it for doubly-twisted quantum loop algebras of type $\mathrm{AD}$ in the sequel.
}

\subsection{Folding simple $q$-characters} \label{Ssec:fold-qchar}

Now we state our main result in this section.

\begin{thm}\label{Thm:tw-main}
If $(\mathrm{X}_n,r) = (\mathrm{A}_n,2)$ or $(\mathrm{D}_n,2)$, we have
the equalities \eqref{eq:folding} for any $m \in \cM_+$. 
In particular, the bijection $[L(m)] \mapsto [L^\sigma(\phi^\sigma_\fg(m))]$ between the simple isomorphism classes of $\Cc_\Z$ and those of $\Cc^\sigma_\Z$ preserves the dimension of modules.
\end{thm}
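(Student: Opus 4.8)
The plan is to run the bootstrapping scheme of \S\ref{Ssec:proofHconj} almost verbatim, with the folding homomorphism playing the role played there by the evaluation $\ev_{t=1}$. Fix a dominant monomial $m\in\cM_+$ with $\fg$ of type $\mathrm{A}_n$ (resp.\ $\mathrm{D}_n$) and $\sigma$ of order $2$. First I would choose $\tfg$ of type $\mathrm{A}_{\tilde n}$ (resp.\ $\mathrm{D}_{\tilde n}$) with $\tilde n$ large — and, in type $\mathrm{A}$, of the same parity as $n$ — together with its order-$2$ Dynkin automorphism $\tsigma$ and an inclusion $\II\subset\tII$ satisfying \eqref{eq:I-incl} which is moreover \emph{$\sigma$-equivariant}, i.e.\ $\tsigma|_\II=\sigma$. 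In type $\mathrm{D}$ the tail-extending inclusion of Example~\ref{eg:incl} already has this property, since $\sigma$ only interchanges the two fork nodes; in type $\mathrm{A}$ one must instead embed the path $\mathrm{A}_n$ as a \emph{central} subpath of $\mathrm{A}_{\tilde n}$, which is why the parity of $\tilde n$ matters. Let $\tilde m\in\tcM_+$ be the lift of $m$ along $\II\subset\tII$, so that $\res_\II(\tilde m)=m$; by taking $\tilde n$ sufficiently large — exactly as in \S\ref{Ssec:proofHconj} — we may assume $\tilde m$ is supported on an interval of length $<r^\vee\tilde h^\vee$, so that the simple module $\tL(\tilde m)$ lies in a core subcategory of $\tilde\Cc_\Z$ (cf.\ Remark~\ref{rem:core-subcat}); for such $\tilde m$ the folding identities \eqref{eq:folding} (for $\tfg,\tsigma$) are already known, by \cite{KKKOsw3} in type $\mathrm{A}$ and by \cite{KKKOsw4} in general for core subcategories.

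Next I would introduce a \emph{twisted freezing operator} $\frz^{\tsigma}_\sigma$ exactly in parallel with \S\ref{Ssec:frz}, but on the $\sigma$-invariant sides: given a pointed element of the $\tfg$-analog $\tcY^{\tsigma}$ of $\cY^\sigma$, one discards the monomials that are not $\le_\II$ below the leading one (with respect to the $\tilde A_{[i,p]}$, $i\in\II$) and then applies the restriction map $\res^{\tsigma}_\II$ sending $\tilde Y_{[i,p]}\mapsto\tilde Y_{[i,p]}$ for $i\in\II$ and $\tilde Y_{[i,p]}\mapsto 1$ for $i\in\tII\setminus\II$; this is well defined because $\sigma$-equivariance makes $\tsigma$ preserve both $\II$ and $\tII\setminus\II$. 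Since the folding homomorphisms $\phi^\sigma_\fg$ and $\phi^{\tsigma}_{\tfg}$ are monoid isomorphisms carrying the Nakajima orderings to $\le^\sigma$ and $\le^{\tsigma}$ respectively, sending $A_{i,p}\mapsto A_{[i,p]}$ by \eqref{eq:foldA} and intertwining $\res_\II$ with $\res^{\tsigma}_\II$, one gets the commutativity $\phi^\sigma_\fg\circ\frz^{\tfg}_\fg=\frz^{\tsigma}_\sigma\circ\phi^{\tsigma}_{\tfg}$ on pointed elements; this is a formal check of the same nature as the commutativity of \eqref{eq:frzg-ev}.

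The only genuinely new ingredient — and the step I expect to be the main obstacle — is the twisted analog of Proposition~\ref{prop:frz-chiq}: for every $\sigma$-invariant dominant monomial $M$ for $\tfg$,
\[ \frz^{\tsigma}_\sigma\bigl(\chi_q^{\tsigma}(\tL^{\tsigma}(M))\bigr)=\chi_q^{\sigma}\bigl(L^\sigma(\res^{\tsigma}_\II(M))\bigr), \]
where $\chi_q^{\tsigma}$ denotes the $\tfg$-analog of the twisted $q$-character. I would prove this by adapting the argument behind \cite[Lemma 5.9]{Her08} to the twisted setting. The $\sigma$-equivariance of $\II\subset\tII$ is exactly what produces a $\C$-algebra homomorphism $\qlps\to U_q(\mathcal{L}\tfg^{\tsigma})$ sending each Drinfeld generator $k_i^{\pm1},h_{i,m},x^{\pm}_{i,l}$ with $i\in\II$ to the one with the same label for $\tfg$: the relations \eqref{eq:tw-Drinfeld} are respected because $\sigma(i)=\tsigma(i)$ for $i\in\II$ and the Cartan data agree by \eqref{eq:I-incl}. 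Viewing $\tL^{\tsigma}(M)$ as a $\qlps$-module through this homomorphism, the $\qlps$-submodule generated by an $\ell$-highest weight vector is $L^\sigma(\res^{\tsigma}_\II(M))$ by the Chari--Pressley classification for twisted quantum loop algebras \cite{CPtwisted}, and — via the classical weight decomposition, as in the proof of Proposition~\ref{prop:frz-chiq} — it consists precisely of the weight spaces of $\tL^{\tsigma}(M)$ whose weight differs from the highest one by a sum of simple roots indexed by $\II\subset\tII$; since the corresponding $\ell$-weights are exactly the monomials $\le_\II$ below $M$, this yields the displayed identity. The delicate point is to verify that Hernandez's spectral-decomposition results for twisted $q$-characters in \cite{Her10} transfer cleanly to the induced $\qlps$-action — this is the one place where genuine representation theory, rather than formal bookkeeping, is needed.

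Finally, assembling everything for the lift $\tilde m$ of a given $m\in\cM_+$ yields
\[ \chi_q^\sigma(L^\sigma(\phi^\sigma_\fg(m)))=\frz^{\tsigma}_\sigma\bigl(\chi_q^{\tsigma}(\tL^{\tsigma}(\phi^{\tsigma}_{\tfg}(\tilde m)))\bigr)=\frz^{\tsigma}_\sigma\bigl(\phi^{\tsigma}_{\tfg}(\chi_q(\tL(\tilde m)))\bigr)=\phi^\sigma_\fg\bigl(\frz^{\tfg}_\fg(\chi_q(\tL(\tilde m)))\bigr)=\phi^\sigma_\fg(\chi_q(L(m))), \]
where the first equality uses the twisted analog of Proposition~\ref{prop:frz-chiq} together with $\res^{\tsigma}_\II(\phi^{\tsigma}_{\tfg}(\tilde m))=\phi^\sigma_\fg(\res_\II(\tilde m))=\phi^\sigma_\fg(m)$, the second uses the known folding for the core module $\tL(\tilde m)$, the third is the commutativity established above, and the fourth is Proposition~\ref{prop:frz-chiq} with $\res_\II(\tilde m)=m$. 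This is the desired equality \eqref{eq:folding}. The final assertion follows by specializing every variable $Y_{i,a}$ (resp.\ $Y_{[i,a]}$) to $1$: under this specialization $\chi_q$ and $\chi_q^\sigma$ compute $\dim$, and $\phi^\sigma_\fg$ is compatible with the two specializations.
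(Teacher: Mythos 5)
Your proposal is correct and follows the same strategy as the paper's own proof: lift $m$ to $\tilde m$ over a sufficiently large $\tfg$ of the same (A or D) type, invoke the known folding identity for core subcategories from \cite{KKKOsw4, OS}, and push it down via the twisted freezing operator, using the commutativity of freezing with both the untwisted restriction and with folding. The one place you go further than the paper is in sketching a proof of the twisted analog of Proposition~\ref{prop:frz-chiq} (the paper states it as Proposition~\ref{prop:tw-frz-chiq} and omits the proof as parallel to the untwisted case); your sketch via the $\sigma$-equivariant Drinfeld-generator embedding and the classical weight-space argument is exactly the intended adaptation of \cite[Lemma 5.9]{Her08}.
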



A proof of Theorem \ref{Thm:tw-main} is given in \S\ref{Ssec:prf-tw-main} below. 
Note that the case of $(\mathrm{X}_n, r) = (\mathrm{A}_n, 2)$ is already known by Kang--Kashiwara--Kim--Oh as the main result of \cite{KKKOsw3}. 
In this paper, we give an alternative proof.
The case of $(\mathrm{X}_n, r) = (\mathrm{D}_n, 2)$ is new as far as the authors know.
Our proof utilizes a twisted analog of the freezing operator, which we explain in the next subsection. In addition, the following result due to Kang--Kashiwara--Kim--Oh \cite{KKKOsw4} (when $(\mathrm{X}_n, r) = (\mathrm{A}_n,2), (\mathrm{D}_n,2)$) will play a key role. 
The similar result is obtained when $(\mathrm{X}_n,r) = (\mathrm{E}_6,2), (\mathrm{D}_4, 3)$ as well by Oh--Scrimshaw  \cite{OS}.

Recall the subset $\cM[a,b]_+ \subset \cM_+$ associated with a finite integer interval $[a,b] \subset \Z$ and the corresponding monoidal subcategory $\Cc_{[a,b]} \subset \Cc$ defined in \S\ref{Ssec:Hconj} (before Theorem \ref{thm:H-conj-region-Q}).
We let $\cM[a,b]^\sigma_+ \seq \phi^\sigma_\fg(\cM[a,b]_+)$ and define the category $\Cc_{[a,b]}^\sigma$ to be the Serre subcategory of $\Cc^\sigma_\Z$ generated by the simple representations $L^\sigma(m)$ with $m \in \cM[a,b]^\sigma_+$.
This is a monoidal subcategory of $\Cc_{\Z}^\sigma$.
The folding homomorphism $\Phi^\sigma_\fg$ restricts to the ring isomorphism
\[ \Phi^\sigma_\fg \colon K(\Cc_{[a,b]}) \to K(\Cc^\sigma_{[a,b]})\]
for any finite integer interval $[a,b] \subset \Z$.  

\begin{thm}[{\cite{KKKOsw4, OS}}] \label{thm:tw-sw}
For any $(\fg,\sigma)$ as in \S\ref{Ssec:tw-setup}, the equalities \eqref{eq:folding} holds if $m \in \cM[a,b]_+$ for some finite integer interval $[a,b] \subset \Z$ satisfying $b-a < h^\vee$, where $h^\vee$ denotes the (dual) Coxeter number of $\fg$. 
\end{thm}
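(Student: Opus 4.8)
The plan is to mimic the proof of Theorem~\ref{thm:H-conj-C} in \S\ref{Ssec:proofHconj}, with the role of the evaluation $\ev_{t=1}$ now played by the folding homomorphism. Fix $\rX\in\{\mathrm A,\mathrm D\}$ and $n\ge 2$, write $\fg$ for the simple Lie algebra of type $\rX_n$ and $\sigma$ for its order-$2$ diagram automorphism. For an integer $\tilde n>n$, let $\tfg$ be of type $\rX_{\tilde n}$ and $\tsigma$ its order-$2$ diagram automorphism (the flip if $\rX=\mathrm A$, the swap of the fork nodes if $\rX=\mathrm D$). First I would choose the inclusion $\II\subset\tII$ so that, in addition to \eqref{eq:I-incl}, it satisfies $\tsigma(\II)=\II$ and $\tsigma|_{\II}=\sigma$; this is possible --- for $\rX=\mathrm D$ take the inclusion of Example~\ref{eg:incl} (the fork nodes of $\tfg$ restrict to those of $\fg$), and for $\rX=\mathrm A$ realize both Dynkin diagrams as paths symmetric about a common centre, with $\II$ the central subpath. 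With this choice the freezing operator $\frz^{\tfg}_{\fg}$ of \S\ref{Ssec:frz} is defined.

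Next I would transport the combinatorics of \S\S\ref{Ssec:pt}--\ref{Ssec:frz} to the $\sigma$-invariant setting. By Proposition~\ref{Prop:foldCZ} the folding maps $\phi^{\sigma}_{\fg}\colon\cY\xrightarrow{\sim}\cY^{\sigma}$ and $\phi^{\tsigma}_{\tfg}\colon\tcY\xrightarrow{\sim}\tcY^{\tsigma}$ are isomorphisms carrying Laurent monomials to Laurent monomials, dominant monomials to dominant monomials, and $A_{i,p}\mapsto A_{[i,p]}$ (cf.\ \eqref{eq:foldA}); hence they intertwine the Nakajima orderings and the refined orderings $\le_{\II}$ with their $\sigma$-invariant analogues. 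Consequently there is a well-defined twisted freezing operator $\frz^{\tsigma}_{\sigma}\colon(\tcY^{\tsigma})^{\ptd}\to(\cY^{\sigma})^{\ptd}$ --- discard from a $\mu$-pointed element the terms not $\le^{\tsigma}_{\II}\mu$ and then apply the restriction $\res^{\sigma}_{\II}\colon\tilde Y_{[i,p]}\mapsto Y_{[i,p]}$ ($i\in\II$), $\tilde Y_{[i,p]}\mapsto 1$ ($i\notin\II$) --- which coincides with $\phi^{\sigma}_{\fg}\circ\frz^{\tfg}_{\fg}\circ(\phi^{\tsigma}_{\tfg})^{-1}$. In particular the square
\[
\xymatrix{
\tcY^{\ptd} \ar[r]^-{\frz^{\tfg}_{\fg}} \ar[d]_-{\phi^{\tsigma}_{\tfg}}^-{\cong} & \cY^{\ptd} \ar[d]^-{\phi^{\sigma}_{\fg}}_-{\cong} \\
(\tcY^{\tsigma})^{\ptd} \ar[r]^-{\frz^{\tsigma}_{\sigma}} & (\cY^{\sigma})^{\ptd}
}
\]
commutes, and $\res^{\sigma}_{\II}\circ\phi^{\tsigma}_{\tfg}=\phi^{\sigma}_{\fg}\circ\res_{\II}$ on $\tcM$.

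The substantive step, and the one I expect to require the most care, is the twisted analogue of Proposition~\ref{prop:frz-chiq}: for every $\mu\in\tcM^{\tsigma}_{+}$,
\[ \frz^{\tsigma}_{\sigma}\bigl(\chi_q^{\sigma}(\tL^{\tsigma}(\mu))\bigr)=\chi_q^{\sigma}\bigl(L^{\sigma}(\res^{\sigma}_{\II}(\mu))\bigr), \]
where $\tL^{\tsigma}(\mu)$ denotes the simple module of the category $\tilde{\Cc}^{\tsigma}_{\Z}$ attached to $(\tfg,\tsigma)$; the left-hand side makes sense since $\chi_q^{\sigma}(\tL^{\tsigma}(\mu))$ is $\mu$-pointed by Proposition~\ref{prop:tw-domin}. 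I would prove this following Hernandez~\cite[Lemma~5.9]{Her08}. Using $\tsigma(\II)=\II$ and $\tsigma|_{\II}=\sigma$ together with \eqref{eq:I-incl}, the Drinfeld generators of $\qlps$ indexed by $\II$ satisfy in $U_q(\mathcal{L}\tfg^{\tsigma})$ exactly the twisted relations \eqref{eq:tw-Drinfeld}, so there is a $\C$-algebra homomorphism $\qlps\to U_q(\mathcal{L}\tfg^{\tsigma})$; restricting $\tL^{\tsigma}(\mu)$ along it, the submodule $L$ generated by the $\ell$-highest weight vector is $\ell$-highest weight with Drinfeld polynomials $\res^{\sigma}_{\II}(\mu)$, hence $L\cong L^{\sigma}(\res^{\sigma}_{\II}(\mu))$ (here one invokes the Chari--Pressley classification for twisted algebras \cite{CPtwisted}, as in \cite{Her10}). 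Finally, inspecting the classical weight decomposition of $\tL^{\tsigma}(\mu)$: since each $\tilde A_{[i,p]}$ lowers the classical weight by the $i$-th simple root and every monomial of $\chi_q^{\sigma}(\tL^{\tsigma}(\mu))$ lies $\le^{\tsigma}\mu$ (Proposition~\ref{prop:tw-domin}), the submodule $L$ is precisely the span of the weight spaces whose weight is the highest weight minus a $\Z_{\ge0}$-combination of the simple roots indexed by $\II$, and this corresponds exactly to the monomials $\le^{\tsigma}_{\II}\mu$; applying $\res^{\sigma}_{\II}$ (which is injective on such monomials) gives $\chi_q^{\sigma}(L)=\frz^{\tsigma}_{\sigma}(\chi_q^{\sigma}(\tL^{\tsigma}(\mu)))$. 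The delicate point here is the identification $L\cong L^{\sigma}(\res^{\sigma}_{\II}(\mu))$ over the twisted algebra, but it should parallel the untwisted case closely.

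Granting this, the theorem follows as in \S\ref{Ssec:proofHconj}. Let $m\in\cM_{+}$, choose a finite interval $[a,b]$ with $m\in\cM[a,b]_{+}$, and pick $\tilde n>n$ large enough that $b-a<\tilde{h}^{\vee}$, the (dual) Coxeter number of $\tfg$ --- explicitly $b-a<\tilde n+1$ if $\rX=\mathrm A$ and $b-a<2\tilde n-2$ if $\rX=\mathrm D$ (see Example~\ref{Ex:DynkinBC}). Let $\tilde m\in\tcM[a,b]_{+}$ be the lift of $m$ (so $\res_{\II}(\tilde m)=m$) and $\mu\seq\phi^{\tsigma}_{\tfg}(\tilde m)$, so that $\res^{\sigma}_{\II}(\mu)=\phi^{\sigma}_{\fg}(m)$. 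Since $(\tfg,\tsigma)$ is one of the cases of \S\ref{Ssec:tw-setup} and $b-a<\tilde{h}^{\vee}$, Theorem~\ref{thm:tw-sw} applied to $(\tfg,\tsigma)$ gives $\phi^{\tsigma}_{\tfg}(\chi_q(\tL(\tilde m)))=\chi_q^{\sigma}(\tL^{\tsigma}(\mu))$. Then
\begin{align*}
\phi^{\sigma}_{\fg}\bigl(\chi_q(L(m))\bigr)
&=\phi^{\sigma}_{\fg}\bigl(\frz^{\tfg}_{\fg}(\chi_q(\tL(\tilde m)))\bigr)
=\frz^{\tsigma}_{\sigma}\bigl(\phi^{\tsigma}_{\tfg}(\chi_q(\tL(\tilde m)))\bigr)\\
&=\frz^{\tsigma}_{\sigma}\bigl(\chi_q^{\sigma}(\tL^{\tsigma}(\mu))\bigr)
=\chi_q^{\sigma}\bigl(L^{\sigma}(\phi^{\sigma}_{\fg}(m))\bigr),
\end{align*}
using Proposition~\ref{prop:frz-chiq}, the commuting square, Theorem~\ref{thm:tw-sw}, and the twisted analogue of Proposition~\ref{prop:frz-chiq} with $\res^{\sigma}_{\II}(\mu)=\phi^{\sigma}_{\fg}(m)$, in that order. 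This is the equality \eqref{eq:folding}. For the last assertion: $\Phi^{\sigma}_{\fg}$ is a ring isomorphism by Proposition~\ref{Prop:foldCZ} and $\phi^{\sigma}_{\fg}\colon\cM_{+}\to\cM^{\sigma}_{+}$ is a bijection, so $[L(m)]\mapsto[L^{\sigma}(\phi^{\sigma}_{\fg}(m))]$ is a bijection of simple classes; and since $\dim L(m)$ and $\dim L^{\sigma}(\phi^{\sigma}_{\fg}(m))$ are obtained by specializing all variables to $1$ in $\chi_q(L(m))$ and $\chi_q^{\sigma}(L^{\sigma}(\phi^{\sigma}_{\fg}(m)))$ respectively, and $\phi^{\sigma}_{\fg}$ sends $Y_{i,p}\mapsto Y_{[i,p]}$, the two dimensions coincide. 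For $\rX=\mathrm A$ this recovers \cite{KKKOsw3}.
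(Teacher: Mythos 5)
There is a genuine gap: what you have written is not a proof of Theorem~\ref{thm:tw-sw} but rather (essentially) the paper's proof of Theorem~\ref{Thm:tw-main}, and as a proof of Theorem~\ref{thm:tw-sw} it is circular. In the final step you write ``Theorem~\ref{thm:tw-sw} applied to $(\tfg,\tsigma)$ gives $\phi^{\tsigma}_{\tfg}(\chi_q(\tL(\tilde m)))=\chi_q^{\sigma}(\tL^{\tsigma}(\mu))$'' --- that is, you invoke the very statement you are asked to prove, merely for a higher-rank pair. This cannot be repaired into an induction: the freezing argument only reduces the claim for $(\fg,\sigma)$ to the claim for a larger $(\tfg,\tsigma)$, so the rank keeps increasing and there is never a base case. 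Note also that your hypothesis $b-a<h^\vee$ for $\fg$ is never used, and that your argument only treats $\rX\in\{\mathrm A,\mathrm D\}$ with $r=2$, whereas Theorem~\ref{thm:tw-sw} is asserted for every $(\fg,\sigma)$ of \S\ref{Ssec:tw-setup}, including $(\mathrm E_6,2)$ and $(\mathrm D_4,3)$ --- cases which the rank-raising trick cannot reach at all (there is no arbitrarily large analogue of $\mathrm E_6$ or of triality $\mathrm D_4$), which is exactly why the paper needs this theorem as an external input rather than deriving it by freezing.

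The paper's actual proof of Theorem~\ref{thm:tw-sw} is a convention-matching citation: one identifies the interval categories $\Cc_{[a,a+h^\vee-1]}$ and $\Cc^{\sigma}_{[a,a+h^\vee-1]}$ (up to spectral parameter shift) with the core subcategories $\mathcal C_Q^{(1)}$ and $\mathcal C_Q^{(r)}$ attached to a suitable sink-source orientation $Q$ of the Dynkin diagram, checks that the restriction of the folding isomorphism $\Phi^{\sigma}_{\fg}$ coincides with $[\mathcal F_Q^{(r)}]\circ[\mathcal F_Q^{(1)}]^{-1}$ coming from the generalized quantum affine Schur--Weyl duality functors, and then invokes \cite[Corollary 5.6]{KKKOsw4} and \cite[Theorem 6.9(4)]{OS}. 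That quiver-Hecke-algebra input is the irreducible content of the theorem and cannot be bypassed by the freezing operator. To your credit, the bootstrapping argument you did write --- the choice of a $\tsigma$-stable inclusion $\II\subset\tII$, the commuting square $\phi^{\sigma}_{\fg}\circ\frz^{\tfg}_{\fg}=\frz^{\tfg,\sigma}_{\fg}\circ\phi^{\sigma}_{\tfg}$, and the sketch of the twisted analogue of Proposition~\ref{prop:frz-chiq} via the $\ell$-highest weight submodule --- matches the paper's \S\ref{Ssec:prf-tw-main} and Proposition~\ref{prop:tw-frz-chiq} quite faithfully; it just proves the downstream result, with Theorem~\ref{thm:tw-sw} as an assumed ingredient rather than a conclusion.
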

\begin{proof}
The result follows from \cite[Corollary 5.6]{KKKOsw4} and \cite[Theorem 6.9(4)]{OS} once we notice that the categories $\Cc_{[a,a+h^\vee-1]}$ and $\Cc_{[a,a+h^\vee-1]}^{\sigma}$ are identical (up to spectral parameter shift) to the core subcategories $\mathcal{C}_Q^{(1)}$ and $\mathcal{C}_Q^{(r)}$ respectively, associated with a suitable sink-source orientation $Q$ of the Dynkin diagram of $\fg$, and that the restriction of the folding isomorphism $\Phi^\sigma_\fg \colon K(\Cc_{[a,a+h^\vee-1]}) \to K(\Cc^\sigma_{[a,a+h^\vee-1]})$ is equal to the isomorphism $[\mathcal{F}_Q^{(r)}]\circ[\mathcal{F}_Q^{(1)}]^{-1} \colon K(\mathcal{C}_Q^{(1)}) \to K(\mathcal{C}_Q^{(r)})$ arising from the generalized quantum affine Schur--Weyl duality functors $\mathcal{F}_Q^{(1)}$ and $\mathcal{F}_Q^{(r)}$. 
\end{proof}

\subsection{Freezing twisted $q$-characters}
Let $(\fg,\sigma)$ be the pair of a complex finite-dimensional simple Lie algebra $\fg$ of type $\mathrm{X}_n$ with $\mathrm{X} \in \{\mathrm{A,D,E}\}$ and a non-trivial automorphism $\sigma$ of the Dynkin diagram of $\fg$, as in \S\ref{Ssec:tw-setup}.
We consider another such pair $(\tfg, \tsigma)$ and assume that there is an inclusion $\II \subset \tII$ of the sets of nodes of Dynkin diagrams satisfying $\tc_{ij} = c_{ij}$ for any $i,j \in \II$, where
$(c_{ij})_{i,j \in \II}$ and $(\tc_{ij})_{i,j \in \tII}$ are the Cartan matrices of $\fg$ and $\tfg$ respectively.
We further assume that the automorphism $\tsigma$ preserves the subset $\II$ and the restriction $\tsigma|_\II$ coincides with $\sigma$.
By the classification of $(\fg,\sigma)$ in \S\ref{Ssec:tw-setup}, such a situation is limited. 
In particular, $\tsigma$ is uniquely determined by the inclusion $\II \subset \tII$ and $\sigma$, and we have $\ord(\tsigma) = \ord(\sigma)=r$.
Therefore, by an abuse of notation, we shall simply write $\sigma$ instead of $\tsigma$ in what follows.

\begin{eg}\label{eg:tw-incl}
In this paper, we are mainly interested in the case when $\fg$ is of type $\mathrm{X}_n$ and $\tfg$ is of type $\mathrm{X}_{\tilde{n}}$ with $\mathrm{X} \in \{\mathrm{A,D}\}$ and $\tilde{n} \ge n$, and $\sigma$ is of order $r=2$. 
Here we assume that $\tilde{n}$ has the same parity as $n$ if $\mathrm{X}=\mathrm{A}$.
In this case, we make the identification
\[
\II = \begin{cases}
[-l,l] \setminus \{0\} & \text{if $\mathrm{X}_n = \mathrm{A}_{2l}$}, \\
[-l,l] & \text{if $\mathrm{X}_n = \mathrm{A}_{2l+1}$}, \\
[1,n] & \text{if $\mathrm{X}_n = \mathrm{D}_n$},
\end{cases}
\]
so that the nodes of Dynkin diagram of $\fg$ are labelled as depicted in \S\ref{Ssec:tw-setup}. 
We apply the similar identification for $\tII$ as well.
Then, the natural inclusion $\II \subset \tII$ satisfies the above assumptions.
\end{eg}

Retain the notations $\tcY$, $\tcM$, $\tcM_+$, $\tilde{Y}_{i,p}$, $\tilde{A}_{i,p}$, $\tilde{L}(m)$ for $\tqlp$ from \S\ref{Sec:frz}.
Let $\tcY^{\sigma}$, $\tcM^{\sigma}$, $\tcM^{\sigma}_+$, $\tilde{Y}_{[i,p]}$, $\tilde{A}_{[i,p]}$, $\tilde{L}^\sigma(m)$ for their respective counterparts for $\tqlps$, defined as in the previous subsections.   
We have defined the partial ordering $\le^{\sigma}$ of the set $\tcM^{\sigma}$ with the variables $\tilde{A}_{[i,p]}$. 
As in the untwisted case, we define another partial ordering $\le^{\sigma}_\II$ associated with the inclusion $\II \subset \tII$ so that we have $m \le_\II m'$ if and only if $m^{-1}m'$ is a monomial in the variables $\tilde{A}_{[i,p]}$ for $i \in \II$ and $p \in \Z$.

Consider the ring homomorphism $\res^\sigma_\II \colon \tcY^\sigma \to \cY^\sigma$ given by 
\[ \res_{\II}^\sigma(\tilde{Y}_{[i,p]}) \seq \begin{cases}
Y_{[i,p]} & \text {if $i \in \II$},\\
1 & \text{if $i \not \in \II$}
\end{cases} \]
for $(i,p) \in \tII \times \Z$.
We have $\res_{\II}^\sigma(\tilde{A}_{[i,p]}) = A_{[i,p]}$ for any $(i,p) \in \II \times \Z$. 
Therefore, the following implication is true for $m,m' \in \tcM^\sigma$:
\begin{equation} \label{eq:tw-imply}
m \le^\sigma_\II m' \quad  \Longrightarrow \quad \res^\sigma_\II(m) \le^\sigma \res^\sigma_\II(m').
\end{equation} 
Moreover, it is obvious from the definition that the following diagram commutes:
\begin{equation}
\label{eq:res-fold}
\vcenter{\xymatrix{ 
\tcY \ar[r]^-{\res_\II}\ar[d]_-{\phi^\sigma_{\tfg}} & \cY \ar[d]^-{\phi^{\sigma}_{\fg}}\\
\tcY^{\sigma} \ar[r]^-{\res^\sigma_\II} & \cY^{\sigma}.
}}
\end{equation}

As in the untwisted case, 
we say that an element $y$ of $\cY^\sigma$ is $m$-pointed, for $m \in \cM^\sigma$, if it is written in the form
\[
y= m + \sum_{m' \in \cM^\sigma, m' <^\sigma m} c_{m'}m'
\]
for some $c_{m'}\in \Z$.
For any $m \in \cM^\sigma_+$, the simple twisted $q$-character $\chi_q^\sigma(L^\sigma(m))$ is $m$-pointed by Proposition \ref{prop:tw-domin}.
An element of $\cY^\sigma$ is called pointed if it is $m$-pointed for some $m \in \cM^\sigma$.
Let $\cY^{\sigma,\ptd}$ denote the set of pointed elements in $\cY^\sigma$.
Similarly, we define $\tcY^{\sigma,\ptd} \subset \tcY^\sigma$.

Analogously to the untwisted case, for any $m$-pointed element $y$ of $\tcY^\sigma$ (with $m \in \tcM^\sigma$)  written as
\[ y= m + \sum_{m' \in \tcM^\sigma, m' <^\sigma m} c_{m'}m'\]
with $c_{m'} \in \Z$, we define its freezing $\frz^{\tfg, \sigma}_{\fg}(y)$ to be the following element of $\cY^\sigma$:
\[ \frz^{\tfg, \sigma}_{\fg}(y) \seq \res^\sigma_\II \left( m + \sum_{m' \in \tcM^\sigma, m' <^\sigma_\II m} c_{m'}m' \right).\]
By \eqref{eq:tw-imply}, this is $\res^\sigma_\II(m)$-pointed. 
Thus, the assignment $y \mapsto \frz^{\fg, \sigma}_{\tfg}(y)$ defines the map $\frz^{\fg, \sigma}_{\tfg} \colon\tcY^{\sigma,\ptd} \to \cY^{\sigma,\ptd}$.

Note that the folding homomorphisms restrict to the bijections
\[ \phi^\sigma_\fg \colon \cY^\ptd \to \cY^{\sigma,\ptd}, \qquad \phi^{\sigma}_{\tfg} \colon \tcY^{\ptd} \to \tcY^{\sigma, \ptd}\]
thanks to \eqref{eq:foldA} and its $\tfg$-analog.
It is immediate that the following diagram commutes:
\begin{equation} \label{eq:frzg-fold}
\vcenter{\xymatrix{ 
\tcY^\ptd \ar[r]^-{\frz^{\tfg}_{\fg}}\ar[d]_-{\phi^\sigma_{\tfg}} & \cY^\ptd \ar[d]^-{\phi^{\sigma}_{\fg}}\\
\tcY^{\sigma, \ptd} \ar[r]^-{\frz^{\tfg, \sigma}_{\fg}} & \cY^{\sigma,\ptd}.
}}
\end{equation}
The next statement is the twisted analog of Proposition \ref{prop:frz-chiq}, whose proof we omit as it is parallel to the untwisted case.
\begin{prop} \label{prop:tw-frz-chiq}
For any $m \in \tcM^\sigma_+$, we have
\begin{equation}\label{eq:frz-chiq-g}
\frz^{\tfg, \sigma}_{\fg}(\chi_q^\sigma(\tilde{L}^\sigma(m))) = \chi_q^{\sigma}(L^{\sigma}(\res_\II^\sigma(m))). 
\end{equation}
\end{prop}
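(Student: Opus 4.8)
The plan is to follow the proof of Proposition~\ref{prop:frz-chiq} step by step, substituting the $\sigma$-twisted objects for the untwisted ones. First I would construct a $\C$-algebra homomorphism $\iota^\sigma \colon \qlps \to \tqlps$ sending each Drinfeld-type generator $k_i^{\pm 1}, h_{i,m}, x_{i,l}^{\pm}$ of $\qlps$ with $i \in \II$ to the generator of $\tqlps$ carrying the same label. This is well defined: all the defining relations of $\qlps$ involve only labels in $\II$, and the twist relations~\eqref{eq:tw-Drinfeld} among the generators indexed by $\II$ are matched with those of $\tqlps$ precisely because we assumed $\sigma = \tsigma|_{\II}$ and $\tc_{ij} = c_{ij}$ for $i,j \in \II$. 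This $\iota^\sigma$ is the twisted counterpart of the homomorphism $\qlp \to \tqlp$ used in the proof of Proposition~\ref{prop:frz-chiq}.

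Via $\iota^\sigma$, I would view $\tilde L^\sigma(m)$ as a $\qlps$-module and let $L \subset \tilde L^\sigma(m)$ be the $\qlps$-submodule generated by an $\ell$-highest weight vector $v$ of $\tilde L^\sigma(m)$. Since $\iota^\sigma$ sends $x_{i,l}^{+}$, $k_i$ and $h_{i,m}$ with $i \in \II$ to the corresponding elements of $\tqlps$, the vector $v$ is $\ell$-highest weight for the $\qlps$-action, and by~\eqref{eq:lhw} its Drinfeld polynomial is $\res^\sigma_\II(m)$. Arguing as in \cite[Lemma~5.9]{Her08}, adapted to the twisted setting (cf.\ \cite{Her10}), one shows $L \cong L^\sigma(\res^\sigma_\II(m))$ as $\qlps$-modules: by the classical weight decomposition of $\tilde L^\sigma(m)$, the submodule $L$ is exactly the sum of all weight spaces of $\tilde L^\sigma(m)$ whose weight is the highest weight minus a $\Z_{\ge 0}$-combination of the simple roots $\alpha_i$ indexed by $i \in \II \subset \tII$.

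It then remains to match $q$-characters. By Proposition~\ref{prop:tw-domin}, every $\sigma$-invariant monomial $\tilde m'$ occurring in $\chi_q^\sigma(\tilde L^\sigma(m))$ satisfies $\tilde m' \le^\sigma m$, hence is of the form $\tilde m' = m \prod_k \tilde A_{[i_k, p_k]}^{-1}$; the corresponding generalized $\ell$-weight space has classical weight $\mathrm{wt}(m) - \sum_k \alpha_{i_k}$, and since the simple roots $\alpha_i$ ($i \in \tII$) are linearly independent, this weight lies in $\mathrm{wt}(m) - \sum_{i\in\II}\Z_{\ge 0}\alpha_i$ if and only if every $i_k \in \II$, i.e.\ if and only if $\tilde m' \le^\sigma_\II m$. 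Thus the $\ell$-weight spaces contained in $L$ are exactly those attached to the $\tilde m' \le^\sigma_\II m$, and, because $\iota^\sigma$ maps the loop Cartan subalgebra $U_q(\mathcal{L}\fh^\sigma)$ of $\qlps$ into that of $\tqlps$, such a space contributes the monomial $\res^\sigma_\II(\tilde m')$ to $\chi_q^\sigma(L)$. Writing $\chi_q^\sigma(\tilde L^\sigma(m)) = m + \sum_{\tilde m' <^\sigma m} c_{\tilde m'} \tilde m'$, we therefore get
\[ \chi_q^\sigma(L) = \res^\sigma_\II\Big( m + \sum_{\tilde m' <^\sigma_\II m} c_{\tilde m'} \tilde m' \Big) = \frz^{\tfg,\sigma}_{\fg}\big(\chi_q^\sigma(\tilde L^\sigma(m))\big), \]
and combining with $L \cong L^\sigma(\res^\sigma_\II(m))$ yields~\eqref{eq:frz-chiq-g}.

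The main obstacle---although it is more a bookkeeping point than a genuine difficulty, given Hernandez's work---is the twisted analogue of \cite[Lemma~5.9]{Her08}, namely that the $\qlps$-submodule generated by the $\ell$-highest weight vector is simple with Drinfeld polynomial $\res^\sigma_\II(m)$, rather than merely admitting $L^\sigma(\res^\sigma_\II(m))$ as a subquotient. As in \cite{Her08}, the cleanest route is to avoid proving irreducibility directly and instead pin $L$ down through the weight-space description above, which simultaneously delivers the $q$-character truncation and, via the characterization of simple $\ell$-highest weight modules, the isomorphism $L \cong L^\sigma(\res^\sigma_\II(m))$.
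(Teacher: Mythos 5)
Your proof is precisely the argument the paper leaves implicit: the paper's own ``proof'' of Proposition~\ref{prop:tw-frz-chiq} is just the remark that it is parallel to Proposition~\ref{prop:frz-chiq}, and you have carried out that parallel faithfully---constructing $\iota^\sigma\colon \qlps\to\tqlps$ (well-defined since $\tsigma|_\II=\sigma$ and $\tilde c_{ij}=c_{ij}$ on $\II$), restricting $\tL^\sigma(m)$, taking the $\qlps$-submodule $L$ generated by the $\ell$-highest weight vector, citing the twisted analog of \cite[Lemma~5.9]{Her08} for $L\cong L^\sigma(\res^\sigma_\II(m))$, and reading off the $q$-character truncation from the classical weight decomposition. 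One small imprecision: in the twisted setting, the classical weight decomposition takes place with respect to the folded Lie algebra $\tfg^\sigma$, so the linearly independent simple roots to invoke are those of $\tfg^\sigma$ indexed by the $\sigma$-orbits $[i]$ of $\tII$ (each $\tilde A_{[i,p]}^{-1}$ shifts the classical weight by the $\tfg^\sigma$-root $\alpha_{[i]}$), not ``the simple roots $\alpha_i$ ($i\in\tII$)'' of $\tfg$; this changes nothing in the end because $\II$ is $\sigma$-stable with $\sigma|_\II=\sigma$, so $\sigma$-orbits in $\II$ inject into $\sigma$-orbits in $\tII$. Your closing observation that the twisted analog of \cite[Lemma~5.9]{Her08} is the one item that genuinely needs checking is a fair reading of where the ``parallel'' must actually be verified.
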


\subsection{Proof of Theorem \ref{Thm:tw-main}}\label{Ssec:prf-tw-main}
Now, we are ready to prove Theorem \ref{Thm:tw-main}.
Let $n$ and $\tilde{n}$ be integers satisfying $1<n<\tilde{n}$.
Let $\fg$ and $\tfg$ be complex finite-dimensional simple Lie algebras of type $\mathrm{X}_n$ and $\mathrm{X}_{\tilde{n}}$ respectively, with $\mathrm{X} \in \{\mathrm{A,D} \}$.
We choose the inclusion $\II \subset \tII$ as in Example \ref{eg:tw-incl}.
Let $\sigma \colon \II \to \II$ be the unique non-trivial diagram automorphism of order $r = 2$, which extends to a diagram automorphism on $\tilde{I}$ in the unique way.
We fix $n$ and choose $\tilde{n}$ sufficiently large; the precise bound (which depends on the simple module in question) will be given below.

Given an arbitrary dominant monomial $m \in \cM_+$, our goal is to show the equality
\begin{equation}\label{eq:tw-goal}
\phi^{\sigma}_\fg(\chi_q(L(m))) = \chi_q^{\sigma}(L^{\sigma}(\phi^\sigma_\fg(m))). 
\end{equation}
Choose a finite integer interval $[a,b] \subset \Z$ large enough so that we have $m \in \cM[a,b]_+$.
Depending on this choice of interval $[a,b]$ (and hence on the given $m$), we choose the integer $\tilde{n}$ large enough so that $b-a$ is smaller than the (dual) Coxeter number $\tilde{h}^\vee$ of $\tfg$.  
More explicitly, we choose $\tilde{n} \in \Z_{> n}$ so that $\tilde{n}+1 > b-a$ holds when $\mathrm{X = A}$, and $2(\tilde{n}-1) > b-a$ holds when $\mathrm{X=D}$. 
Let $\tilde{m}\in\tcM[a,b]_+$ be the dominant monomial obtained from $m$ by replacing each factor $Y_{i,p}$ with $\tilde{Y}_{i,p}$. 
Obviously, we have $\res_\II(\tilde{m})=m$. For the corresponding simple $\tqlp$-module $\tL(\tilde{m})$, we can apply Theorem \ref{thm:tw-sw} to get
\begin{equation} \label{eq:tw-KL-large-n}
\phi^\sigma_{\tfg} (\chi_q(\tL(\tilde{m}))) = \chi_q^\sigma(\tL^\sigma(\phi^\sigma_{\tfg}(\tilde{m}))).
\end{equation} 
Then, we obtain
\begin{align*}
\phi^{\sigma}_{\fg}(\chi_q(L(m)))&= \phi^{\sigma}_{\fg}(\frz^{\tfg}_{\fg}(\chi_q(\tL(\tilde{m}))))&& \text{by Proposition \ref{prop:frz-chiq} and $\res_\II(\tilde{m})=m$,} \\
&= \frz^{\tfg, \sigma}_{\fg}( \phi^{\sigma}_{\tfg}(\chi_q(\tL(\tilde{m}))))&& \text{by the commutativity of \eqref{eq:frzg-fold},} \\  
&= \frz^{\tfg, \sigma}_{\fg}(\chi_q^\sigma(\tL^\sigma(\phi^\sigma_{\tfg}(\tilde{m})))) && \text{by \eqref{eq:tw-KL-large-n},} \\
&= \chi_q^{\sigma}(L^{\sigma}(\res_\II^\sigma(\phi^\sigma_{\tfg}(\tilde{m})))) && \text{by Proposition \ref{prop:tw-frz-chiq},} \\
&= \chi_q^{\sigma}(L^{\sigma}(\phi^\sigma_{\fg}(m))) && \text{by the commutativity of \eqref{eq:res-fold} and $\res_\II(\tilde{m})=m$,}
\end{align*}
which verifies the desired equality \eqref{eq:tw-goal}.


    \def\cprime{$'$}
\providecommand{\bysame}{\leavevmode\hbox to3em{\hrulefill}\thinspace}
\providecommand{\MR}{\relax\ifhmode\unskip\space\fi MR }
\providecommand{\MRhref}[2]{%
  \href{http://www.ams.org/mathscinet-getitem?mr=#1}{#2}
}
\providecommand{\href}[2]{#2}

\end{document}